\numberwithin{equation}{section}
\newtheorem{Prop}{Proposition}[section]
\newtheorem{Thm}[Prop]{Theorem}
\newtheorem{Lem}[Prop]{Lemma}
\newtheorem{Cor}[Prop]{Corollary}
\theoremstyle{definition}
\theoremstyle{remark}
\newtheorem{Rem}[Prop]{Remark}
\let\phi=\varphi
\newcommand{\abs}[1]{\mathopen| #1\mathclose|}
\newcommand{\bigabs}[1]{\bigl| #1\bigr|}
\newcommand{\norm}[1]{\mathopen\Vert#1\mathclose\Vert}
\newcommand{\intd}{{\,\mathrm{d}}}
\newcommand{\IR}{\mathbb{R}}
\newcommand{\IS}{{\mathbb S}}
\newcommand{\A}{{\mathscr A}}
\newcommand{\T}{{\mathcal T}}
\newcommand{\E}{\mathcal{E}}
\renewcommand{\L}{\mathcal{L}}
\newcommand{\limplies}{\Rightarrow}
\DeclareMathOperator{\Div}{div}
\DeclareMathOperator{\supp}{supp}
\DeclareMathOperator{\dist}{dist}
\DeclareMathOperator{\spanned}{span}
\DeclareMathOperator{\sign}{sign}
\definecolor{darkgreen}{rgb}{0, 0.7, 0}
\newenvironment{details}{%
  \setbox0=\vbox 
  \bgroup
  \color{gray}%
  \par
}{%
  \egroup
  \ignorespaces
}
\newcommand{\FIXME}[1]{%
  \textcolor{red}{ [\kern -0.38ex[#1]\kern -0.36ex]}\ \ignorespaces}
\title{Asymptotic symmetries for fractional  operators}
\author[C.~Grumiau, M.~Squassina and C.~Troestler]{}
\subjclass{Primary: 35J62, 58E05; Secondary: 35D99, 65N12, 35J70.}
\keywords{fractional Laplacian, Mountain Pass solutions,
  symmetries}
\email{Christopher.Grumiau@umons.ac.be}
\email{marco.squassina@univr.it}
\email{Christophe.Troestler@umons.ac.be}
\thanks{The second author was supported by 2009 national MIUR project:
  ``Variational and Topological Methods in the Study of Nonlinear
  Phenomena''. The first and third author are partially supported by
  the program ``Qualitative study of solutions of variational elliptic
  partial differerential equations. Symmetries, bifurcations,
  singularities, multiplicity and numerics'' (2.4.550.10.F)
  and the project ``Existence and asymptotic behavior of solutions
  to systems of semilinear elliptic partial differential equations''
  (T.1110.14)
  of the \emph{Fonds de la Recherche Fondamentale Collective},
  Belgium.}
\begin{document}

\maketitle

\centerline{\scshape Christopher Grumiau}
\medskip
{\footnotesize
  \centerline{Universit\'e de Mons}
  \centerline{Institut Complexys}
  \centerline{D\'epartement de Math\'ematique}
  \centerline{Service d'Analyse Num\'erique}
  \centerline{Place du Parc, 20}
  \centerline{B-7000 Mons, Belgique}}

\medskip
\centerline{\scshape Marco Squassina}
\medskip
{\footnotesize
  \centerline{Dipartimento di Informatica}
  \centerline{Universit\`a degli Studi di Verona}
  \centerline{C\'a Vignal 2, Strada Le Grazie 15, I-37134 Verona, Italy}}

\medskip
\centerline{\scshape Christophe Troestler}
\medskip
{\footnotesize
  \centerline{Universit\'e de Mons}
  \centerline{Institut Complexys}
  \centerline{D\'epartement de Math\'ematique}
  \centerline{Service d'Analyse Num\'erique}
  \centerline{Place du Parc, 20}
  \centerline{B-7000 Mons, Belgique}}

\bigskip

\begin{abstract}
 In this paper, we study equations driven by a non-local
 integrodifferential operator $\L_K$ with homogeneous
 Dirichlet boundary conditions. More precisely, we study the
 problem
 \begin{equation*}
    \begin{cases}
      - \L_K u + V(x)u =  \abs{u}^{p-2}u, &\text{in }
      \Omega,\\
      u=0, &\text{in } \IR^N \setminus \Omega,
    \end{cases}
  \end{equation*}
  where $2 < p < 2^{*}_s = \frac{2N}{N-2s}$, $\Omega$ is an open
  bounded domain in $\IR^{N}$ for $N\ge 2$ and $V$ is a $L^\infty$
  potential such that $-\L_K + V$ is positive definite.  As a
  particular case, we study the problem
  \begin{equation*}
    \begin{cases}
      (- \Delta)^s u + V(x)u =  \abs{u}^{p-2}u, &\text{in }
      \Omega,\\
      u=0, &\text{in } \IR^N \setminus \Omega,
    \end{cases}
  \end{equation*}
  where $(-\Delta)^s$ denotes the fractional Laplacian (with $0<s<1$).
  We give assumptions on $V$, $\Omega$ and $K$ such that  ground state solutions
  (resp.\ least energy nodal solutions) respect
  the symmetries of some first (resp.\ second) eigenfunctions of
  $-\L_K + V$, at least for   $p$ close to $2$. We
  study the uniqueness, up to a
  multiplicative factor, of those types of solutions.
  The results extend those obtained for the
  local case.
\end{abstract}

\section{Introduction}

Non-local operators arise naturally in many different topics in
physics, engineering and even finance. For examples, they have applications
in  crystal dislocation, soft thin films, obstacle
problems~\cite{barlow,bass}, continuum mechanics~\cite{silling},
chaotic dynamics of classical conservative systems~\cite{stevens} and
graph theory~\cite{lovasz}. In this paper, we shall consider
the non-local counterpart of semi-linear elliptic
equations of the type
\begin{equation}
  \label{eq:pbmS}
  \begin{cases}
    -\Delta u + V(x)u= \abs{u}^{p-2}u, &\text{in } \Omega,\\
    u=0, &\text{in } \partial\Omega,
  \end{cases}
\end{equation}
where $\Omega$ is an open bounded domain with Lipschitz boundary,  $2
< p < 2^{*}$ is a
subcritical exponent (where $2^* := {2N}/({N-2})$ if $N \ge 3$, $2^* =
+\infty$ if $N=2$) and $V\in
L^{\infty}$ is such that $-\Delta + V$ is positive definite. Precisely, we
are predominantly interested in the
qualitative behaviour of solutions to
 \begin{equation}
   \label{eq:pbmF}
    \begin{cases}
      (- \Delta)^s u + V(x)u =  \abs{u}^{p-2}u, &\text{in }
      \Omega,\\
      u=0, &\text{in } \IR^N \setminus \Omega,
    \end{cases}
  \end{equation}
  where $(-\Delta)^s$ denotes the fractional Laplacian (with
$0<s<1$) and $2<p< 2^*_s := \frac{2N}{N-2s}$. Let us recall that,
up to a normalization factor,
$(-\Delta)^s$ may be defined~\cite{valdinoci3} as follows:
for $x\in \IR^N$,
\begin{equation*}
  (-\Delta)^su(x)
  := -c_{N,s} \lim_{\varepsilon \to 0} \int_{\complement B(x,\varepsilon)}
  \frac{u(y) - u(x)}{\abs{y-x}^{N+2s}} \intd y
  = - \tfrac{1}{2} c_{N,s} \int_{\IR^N}
  \frac{u(x+y) - 2 u(x) + u(x-y)}{\abs{y}^{N+2s}}\intd y
\end{equation*}
where $c_{N,s} := s 2^{2s} \, \Gamma(\frac{N+2s}{2}) / \bigl(\pi^{N/2} \,
\Gamma(1-s) \bigr)$ is a positive constant chosen \cite{Vazquez12}
to be coherent with the
Fourier definition of $(-\Delta)^s$.
This problem is variational and a ground state (resp.\ a least energy
nodal solution) can be defined from the associated Euler-Lagrange
functional --- see~\cite{valdinoci} (resp.\ Section~\ref{sec:setting})
for more details.
In this paper,  we would like to study the symmetries of those two types
of variational solutions. In fact, we consider a more general
setting: we are dealing with ground
state and least energy nodal solutions to the following equation:
\begin{equation}
  \label{eq:pbmP}
  \begin{cases}
    - \L_K u + V(x)u =  \abs{u}^{p-2}u, &\text{in }
    \Omega,\\
    u=0, &\text{in } \IR^N \setminus \Omega,
  \end{cases}
\end{equation}
where $\L_K$ is the non-local operator defined as follows
\begin{equation*}
  \L_Ku(x)
  := \int_{\IR^N} \bigl( u(x+y) - 2u(x) + u(x-y)\bigr)
  K(y)\intd y.
\end{equation*}
We shall assume that $K : \IR^N \setminus\{0\}\to
(0,+\infty)$ is a function such that $mK \in L^1(\IR^N)$ where $m(x) :=
\min \{\abs{x}^2,1\}$ and we require the existence of $\theta >0$ and
$s\in (0,1)$ such
that $K(x)\ge \theta \abs{x}^{-(N+2s)}$ for any $x\in
\IR^N\setminus\{0\}$. We also require that $K(x) = K(-x)$ for any
$x\in \IR^N\setminus\{0\}$.
In particular, we can consider
$K(x) = \tfrac{1}{2} c_{N,s}\abs{x}^{-(N+2s)}$ so that
$-\L_K$ is exactly the fractional
Laplacian operator $(-\Delta)^s$ as defined in~\eqref{eq:pbmS}
and \eqref{eq:pbmP} boils down to~\eqref{eq:pbmF}.

Let us point out that, in the current literature, there are several
notions of fractional Laplacian, all of which agree when the problems
are set on the whole $\IR^N$, but some of them disagree in a bounded
domain.  The values $(-\Delta)^s u(x)$ are, as we said, consistent
with the Fourier definition of $(-\Delta)^s$, namely ${\mathcal
  F}^{-1}(|\xi|^{2s}{\mathcal F} u)$ and also agree with the local
formulation due to Caffarelli-Silvestre~\cite{CS},
\begin{equation*}
  (-\Delta)^s u(x)
  = -C\lim_{t\to 0}\Bigl( t^{1-2s}\frac{\partial U}{\partial t}(x,t)\Bigr),
\end{equation*}
where $U:\IR^{N}\times (0,\infty)\to \IR$ is the solution to
$\Div(t^{1-2s}\nabla U)=0$ and $U(x,0)=u(x).$ The fractional laplacian
defined in this way is also called \emph{integral}.  In a bounded
domain $\Omega$, as in \cite{SV}, we choose to operate with it on
restrictions to $\Omega$ of functions defined on $\IR^N$ which are
equal to zero on $\complement\Omega$. A different operator
$(-\Delta)^s_{{\rm spec}}$ called \emph{regional, local} or
\emph{spectral} fractional Laplacian, largely utilized in literature,
can be defined as the power of the Laplace operator $-\Delta$ via the
spectral decomposition theorem.  Let $(\lambda_k)_{k \ge 1}$
and $(e_k)_{k \ge 1}$ be
the eigenvalues and eigenfunctions of $-\Delta$ in $\Omega$ with
Dirichlet boundary condition on $\partial\Omega$, normalized in such a
way that $\abs{e_k}_2=1$. Then, for every $s\in (0,1)$ and all $u\in
H^1_0(\Omega)$ with
\begin{equation*}
  u(x)=\sum_{k=1}^\infty \gamma_k e_k(x),\quad x\in\Omega,
\end{equation*}
one considers the operator
\begin{equation*}
  (-\Delta)^s_{\text{spec}} u(x)
  =\sum_{j=1}^\infty \gamma_j \lambda_j^s e_j(x),\quad x\in\Omega.
\end{equation*}
Of course, in this way, the eigenfunctions of
$(-\Delta)^s_{\text{spec}}$ agree with the eigenfunction $e_k$ of
$-\Delta$.  The operators $(-\Delta)^s$ and
$(-\Delta)^s_{\text{spec}}$ are different, in spite of the current
literature where they are sometimes erroneously interchanged.
In~\cite{CabreT}, the authors were able to recover also for the
spectral fractional Laplacian the aforementioned local realization
procedure.  We refer the interested reader to~\cite{SV2} for a careful
comparison of eigenvalues and eigenvectors of these two operators and
to~\cite{MontVerz} for further discussions about the correlations
among physically relevant nonlocal operators and the introduction of a
notion of fractional Laplacian for Neumann boundary conditions.

Under
the assumptions on $K$ stated above, the
Problem~\eqref{eq:pbmP} is variational (see
\cite[Section~2]{valdinoci}). The energy is defined on the space $H$
of Lebesgue measurable functions $g:
\IR^N\to \IR$ such that $g$ is zero almost everywhere outside
$\Omega$, its
restriction to $\Omega$ belongs to
$L^2(\Omega)$ and, furthermore,  the map $(x,y)\mapsto
\bigl(g(x)-g(y)\bigr) \sqrt{K(x-y)} \in
L^2\bigl(\IR^{2N}\setminus (\complement\Omega\times \complement\Omega)\bigr)$
(we write $\complement\Omega := \IR^N\setminus\Omega$).
The inner product of $H$ is defined as
\begin{equation}
  \label{scalarproduct}
  \langle u,v \rangle_H
  := \int_{Q} \bigl(u(x)-u(y)\bigr) \bigl(v(x)-v(y)\bigr)
  K(x-y)\intd x\intd y,
\end{equation}
where $Q:= \IR^{2N} \setminus (\complement\Omega \times \complement\Omega)$
(see e.g.\ \cite[Section~2]{valdinoci} for more details on $\langle \cdot,\cdot \rangle_H$).
The corresponding norm will be written~$\norm{\cdot}_H$.
The existence of ground state solutions has been proved
in~\cite{valdinoci} while the existence of least energy nodal
solutions is established in this paper
(see Section~\ref{sec:setting}).

We now state the main results.  For $k \ge 1$, we let
$\lambda_k$ (resp.\ $\phi_k$) be the $k^{\text{th}}$ eigenvalue
s counted without multiplicity (resp.\
eigenfunction) of the operator
$-\L_K + V$ with ``Dirichlet boundary conditions'' in $\Omega$
in the sense that $\phi_k =0 $ in
$\complement \Omega$.   We also consider $E_k$ the
eigenspace associated to $\lambda_k$.

\begin{Thm}\label{Thm-intro:simple}
  Assume that $-\L_K + V$ is positive definite.
  If $(u_{p})_{p>2}$ is a family of ground state (resp.\ least energy nodal)
  solutions to Problem~\eqref{eq:pbmP}, then
  \begin{equation*}
    \norm{u_p}_{H} + \abs{u_p}_{2} \le C \lambda_i^\frac{1}{p-2},
    \qquad \text{$\lambda_i = \lambda_1$ (resp.\ $\lambda_2$).}
  \end{equation*}
  If $p_{n}\to 2$ and
  $\lambda_{i}^{\frac{1}{2-p_{n}}}u_{p_{n}}\rightharpoonup u_{*}$ in
  $H$ (the weak convergence necessarily holds, up to a subsequence),
  then $\lambda_{i}^{\frac{1}{2-p_{n}}}u_{p_{n}}\to u_{*}$ in $H$
  and $u_{*}\ne 0$ satisfies
  \begin{equation*}
    \left\{
      \begin{aligned}
        -\L_K u_{*} + V u_* &=\lambda_{i}u_{*},&& \text{ in } \Omega ,\\
        u_{*}&=0,&&\text{ in }\IR^N \setminus \Omega.
      \end{aligned}
    \right.
  \end{equation*}
  Assume that $\lambda_{1}$ (resp.\ $\lambda_2$) is simple. Then, for
  $p$ close to $2$ and any reflection $R$ such that
  $R(\Omega)=\Omega$, ground state solutions (resp.\ least energy
  nodal solutions) to Problem~\eqref{eq:pbmP} possess the same symmetry or
  antisymmetry as $\phi_{1}$ (resp.\ $\phi_2$) with respect to
  $R$. Moreover, this type of solution is unique
  up to its sign.
\end{Thm}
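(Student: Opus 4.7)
My plan is to treat both cases (ground state, $i=1$; least energy nodal solution, $i=2$) in parallel, starting from the rescaling $v_p := \lambda_i^{1/(2-p)} u_p$, which transforms the equation into
\[
  -\L_K v_p + V v_p = \lambda_i \abs{v_p}^{p-2} v_p \quad\text{in } \Omega.
\]
The announced bound $\norm{u_p}_H + \abs{u_p}_2 \le C\lambda_i^{1/(p-2)}$ is then equivalent to a uniform bound on $\norm{v_p}_H + \abs{v_p}_2$. To establish it, I would exploit the variational characterisation of $u_p$: in the ground-state case, $u_p$ realises the Nehari minimax $c_p = \inf_{\mathcal N_p} J_p$, and testing against $t_p \phi_1$ (with $t_p > 0$ chosen so that $t_p\phi_1 \in \mathcal N_p$) yields an upper bound on $\int\abs{v_p}^p$; combined with the Nehari identity $\norm{v_p}_H^2 + \int V v_p^2 = \lambda_i \int\abs{v_p}^p$ and the positive definiteness of $-\L_K + V$, this gives the $H$-bound. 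In the nodal case the same scheme applies, with the test element $t_p^+ \phi_2^+ + t_p^- \phi_2^-$ (where $\phi_2^\pm$ are the positive/negative parts of a second eigenfunction, both non-trivial because $\lambda_2 > \lambda_1$ forces $\phi_2$ to change sign).

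Given this uniform bound and $p_n \to 2$, a subsequence $v_{p_n} \rightharpoonup v_*$ converges weakly in $H$ and strongly in $L^q$ for every $q < 2^*_s$ via the compact fractional Sobolev embedding. Passing to the limit in the rescaled equation tested against any $\phi \in H$ gives $-\L_K v_* + V v_* = \lambda_i v_*$, since $\abs{v_{p_n}}^{p_n - 2} v_{p_n} \to v_*$ strongly in $L^2$. To upgrade weak to strong convergence in $H$, I would test the equation with $v_{p_n}$ and pass to the limit in $\norm{v_{p_n}}_H^2 + \int V v_{p_n}^2 = \lambda_i \int \abs{v_{p_n}}^{p_n}$; the right-hand side tends to $\lambda_i \int v_*^2$, and the analogous identity for $v_*$ then yields $\norm{v_{p_n}}_H \to \norm{v_*}_H$. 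Non-triviality $v_* \neq 0$ follows from a positive lower bound on $\int\abs{v_{p_n}}^{p_n}$, which is a consequence of the Nehari structure (the rescaled ground-state and nodal levels are bounded away from zero as $p \to 2$).

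Once the limit $v_*$ is shown to belong to $E_i$, the simplicity of $\lambda_i$ forces $v_* = \alpha\phi_i$ with $\alpha \neq 0$. In the ground-state case $v_*$ does not change sign, so $v_* = \alpha \phi_1$; in the nodal case, lower semi-continuity of $\abs{v_p^\pm}_q$ combined with the nodal Nehari estimate guarantees $v_*^\pm \neq 0$, and the min-max characterisation of $\lambda_2$ together with the eigenvalue equation forces $v_* \in E_2$. For the symmetry statement, the key observation is that if $R(\Omega) = \Omega$ then $u_p \circ R$ is again a ground state (resp.\ least energy nodal solution) with the same energy; consequently it suffices to prove uniqueness up to sign for $p$ near $2$. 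Once that is established, $u_p \circ R = \varepsilon\, u_p$ with $\varepsilon \in \{\pm 1\}$, and the sign $\varepsilon$ must match that of $\phi_i \circ R = \varepsilon \phi_i$ obtained by passing to the limit.

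The main obstacle is the uniqueness up to sign for $p$ close to $2$. My approach is by contradiction: assume sequences $u_{p_n}, u'_{p_n}$ of non-proportional ground states (resp.\ nodal solutions), rescale them to $v_n, v'_n$, and consider $w_n := (v_n - v'_n)/\norm{v_n - v'_n}_H$. Using the mean value theorem for $t \mapsto \abs{t}^{p-2}t$, $w_n$ satisfies a linearised equation whose coefficient converges to $\lambda_i$ as $p_n \to 2$; passing to the limit (with strong convergence of $w_n$, obtained exactly as in the previous paragraph), the limit $w_*$ lies in $E_i$, hence equals $\gamma\phi_i$ by simplicity. On the other hand, subtracting the Nehari identities for $v_n$ and $v'_n$, dividing by $\norm{v_n - v'_n}_H$, and letting $n \to \infty$ produces an orthogonality condition forcing $\gamma = 0$, contradicting $\norm{w_*}_H = 1$. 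The delicate part is extracting the correct orthogonality relation in the rescaled variables, since at the limit $p = 2$ the nonlinearity degenerates into the spectral problem and the usual implicit-function argument fails; this is where the paper must work hardest.
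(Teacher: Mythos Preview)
Your treatment of the a~priori bounds, the weak-to-strong convergence, and the identification of the limit as an eigenfunction matches the paper's Section~3 closely (the paper's lower bound for non-triviality is obtained via an interpolation $\abs{u}_p^2 \le \abs{u}_2^{2(1-\omega)} \abs{u}_{2^*_s}^{2\omega}$ combined with Poincar\'e and Sobolev, rather than a direct energy-level argument, and for the nodal case the paper first projects onto $E_1^\perp$ to invoke $\lambda_2$; these are details you could fill in).

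The substantive divergence is in the uniqueness step, and here there is both a factual error and a gap. You write that ``the usual implicit-function argument fails''; in fact the paper's proof \emph{is} an implicit function theorem argument (Proposition~4.1). The degeneracy you anticipate---that the linearisation $v \mapsto v - \lambda_i(-\L_K+V)^{-1}v$ has $E_i$ in its kernel---is handled by enlarging the unknowns to $(u,\lambda)$ and adding the normalisation constraint $\norm{u}=1$. The map
\[
  \psi(p,u,\lambda) = \bigl(u - \lambda(-\L_K+V)^{-1}(\abs{u}^{p-2}u),\ \norm{u}^2 - 1\bigr)
\]
then has $\partial_{(u,\lambda)}\psi(2,\phi_i,\lambda_i)$ bijective precisely when $\dim E_i = 1$, yielding a unique local branch $p\mapsto (u_p^*,\lambda_p)$ from $(2,\phi_i,\lambda_i)$, and likewise from $(2,-\phi_i,\lambda_i)$.

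Your proposed alternative via $w_n=(v_n-v'_n)/\norm{v_n-v'_n}_H$ is incomplete as written. The limit $w_*\in E_i$ is correct, but the contradiction you hope to extract from ``subtracting the Nehari identities and dividing by $\norm{v_n-v'_n}_H$'' does not materialise: the left side $\bigl(\norm{v_n}^2-\norm{v'_n}^2\bigr)/\norm{v_n-v'_n}$ tends to $2\langle v_*,w_*\rangle$, and the right side $\lambda_i\bigl(\abs{v_n}_{p_n}^{p_n}-\abs{v'_n}_{p_n}^{p_n}\bigr)/\norm{v_n-v'_n}$ tends to $2\lambda_i\int_\Omega v_* w_*$; since $v_*\in E_i$ these are equal, so the relation is tautological at leading order and does not force $w_*=0$. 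Making a difference-quotient argument work would require going to the next order in $p-2$, which is essentially a Lyapunov--Schmidt reduction---and at that point one is back to (a variant of) the implicit function theorem the paper already uses.
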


The proof of the previous Theorem makes use of the
implicit function theorem (see Sections~\ref{sec:a priori}
and~\ref{sec:IFT}).
In particular, since it is known that $\phi_1$ is a positive
eigenfunction when $V\equiv 0$ (see~\cite[Proposition 9, assertion
c)]{valdinoci}) and thus $\lambda_1$ is simple.
As we show in Lemma~\ref{dimE1=1}, $\lambda_1$ is also simple
when $V \in L^\infty$ and $-\L_K + V$ is positive definite.
In these cases, we have the
following

\begin{Cor}\label{Thm-intro:cor}
  Assume that $-\L_K + V$ is positive definite.
  If $(u_{p})_{p>2}$ are ground state
  solutions to
  \begin{equation}
    \label{Vnullo}
    \begin{cases}
      - \L_K u +Vu=  \abs{u}^{p-2}u, &\text{in }
      \Omega,\\
      u=0, &\text{in } \IR^N \setminus \Omega,
    \end{cases}
  \end{equation}
  then, for $p$ close to $2$ and any reflection $R$ such that
  $R(\Omega)=\Omega$, ground state solutions of \eqref{Vnullo} possess
  the same symmetry or antisymmetry as  $\phi_{1}$ with respect to
  $R$. Moreover, this type of solution is unique up to its sign.
\end{Cor}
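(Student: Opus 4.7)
The plan is to deduce Corollary~\ref{Thm-intro:cor} from Theorem~\ref{Thm-intro:simple} applied with $i=1$. The only additional ingredient needed is the simplicity of $\lambda_1$ under the standing hypotheses, which the excerpt signals as Lemma~\ref{dimE1=1}. So the work splits into two steps: first, establish that $\dim E_1 = 1$ whenever $V\in L^\infty$ and $-\L_K + V$ is positive definite; second, feed this into the general theorem.

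For the first step, I would use the variational characterization
\[
\lambda_1 = \inf_{u \in H \setminus \{0\}}
\frac{\norm{u}_H^2 + \int_\Omega V u^2 \intd x}{\abs{u}_2^2}.
\]
A direct computation shows that $\bigl\|\,\abs{u}\,\bigr\|_H \le \norm{u}_H$, since $\bigl(\abs{a}-\abs{b}\bigr)^2 \le (a-b)^2$ for $a,b\in\IR$ and $K\ge 0$, while the potential term is invariant under $u\mapsto\abs{u}$. Hence every minimizer may be replaced by its absolute value, and a nonlocal strong maximum principle (available thanks to the pointwise positivity hypothesis $K(x)\ge \theta\abs{x}^{-(N+2s)}$) upgrades this to strict positivity a.e.\ in $\Omega$. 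It then follows that every element of $E_1$ has constant sign (splitting any candidate into positive and negative parts and applying the same argument), and two strictly positive $L^2$-orthogonal functions cannot coexist, so $\dim E_1 = 1$.

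For the second step, with $\lambda_1$ simple, Theorem~\ref{Thm-intro:simple} applies directly to the ground state case: for $p$ close to $2$ and any reflection $R$ such that $R(\Omega)=\Omega$, each ground state solution of~\eqref{Vnullo} inherits the same symmetry or antisymmetry as $\phi_1$ with respect to $R$, and is unique up to sign. One can refine this slightly: since $\phi_1>0$, the composition $\phi_1\circ R$ is again a positive eigenfunction for $\lambda_1$, hence by simplicity a positive multiple of $\phi_1$, and after normalization equal to $\phi_1$. So $\phi_1$ is $R$-symmetric, which rules out the antisymmetry alternative and gives genuine $R$-symmetry of the ground states.

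The main obstacle along this route is the self-improvement step within the proof of simplicity, which rests on a nonlocal strong maximum principle; this is precisely where the quantitative lower bound on $K$, rather than mere integrability of $mK$, becomes essential. Once simplicity is in hand, the corollary is a routine specialization of Theorem~\ref{Thm-intro:simple}.
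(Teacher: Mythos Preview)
Your overall strategy---establish $\dim E_1 = 1$ and then invoke Theorem~\ref{Thm-intro:simple}---is exactly the paper's. The additional observation that $\phi_1$, being positive, must be genuinely $R$-symmetric (ruling out antisymmetry) is a pleasant refinement that the paper does not spell out.

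Where you diverge is in the proof of simplicity itself. You argue as in the local case: pass from $u$ to $\abs{u}$ (using $\norm{\abs{u}}_H \le \norm{u}_H$), then invoke a nonlocal strong maximum principle to get strict positivity, and finally conclude that $E_1$ cannot contain two $L^2$-orthogonal positive functions. This is correct but imports an outside tool. The paper's argument (Lemma~\ref{dimE1=1}) is more direct and purely internal: if some $u \in E_1$ changed sign, then the \emph{strict} inequality $\langle u^+, u^-\rangle_H > 0$---which holds simply because $K > 0$ everywhere---forces
\[
\lambda_1 = \frac{\norm{u}^2}{\abs{u}_2^2}
= \frac{\norm{u^+}^2 + 2\langle u^+, u^-\rangle_H + \norm{u^-}^2}{\abs{u^+}_2^2 + \abs{u^-}_2^2}
> \min\Bigl\{ \tfrac{\norm{u^+}^2}{\abs{u^+}_2^2},\ \tfrac{\norm{u^-}^2}{\abs{u^-}_2^2} \Bigr\}
\ge \lambda_1,
\]
a contradiction. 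So every first eigenfunction is of constant sign, and a linear subspace lying in the union of the nonnegative cone and its negative has dimension at most~$1$. The advantage of the paper's route is that it exploits the genuinely nonlocal feature $\langle u^+, u^-\rangle_H \ne 0$ and requires no maximum principle at all; your route is closer to the classical template but, as you rightly flag, leans on a strong maximum principle for $-\L_K + V$ that must be supplied separately.
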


When $\lambda_2$ is not simple, then
we cannot use the implicit
function theorem. In this case, we just are able to conclude the
following result which no longer asserts uniqueness
(see Section~\ref{sec:laypunov}).

\begin{Thm}\label{Thm-intro:simple3}
  Assume that $-\L_K + V$ is positive definite
  and that the zero set of any function in $E_2\setminus\{0\}$ has zero
  Lebesgue measure.
  For $p$ close to $2$, least energy
  nodal solutions $u_p$ to Problem \eqref{eq:pbmP} possess the same
  symmetries and antisymmetries of their orthogonal projection
  on~$E_2$.
\end{Thm}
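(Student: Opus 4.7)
The plan is to perform a Lyapunov--Schmidt reduction along $E_2$ around the limiting eigenvalue problem at $p=2$. First, I rescale by setting $v_p := \lambda_2^{1/(2-p)} u_p$, so that $v_p$ satisfies
\[
  -\L_K v_p + V v_p = \lambda_2 \abs{v_p}^{p-2} v_p \quad \text{in } \Omega,
  \qquad v_p = 0 \text{ in } \complement\Omega.
\]
By Theorem~\ref{Thm-intro:simple}, along any sequence $p_n \to 2^+$ a subsequence of $v_{p_n}$ converges strongly in $H$ to some $u_* \in E_2 \setminus \{0\}$; by the standing hypothesis on $E_2$, this $u_*$ does not vanish on any set of positive Lebesgue measure.

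The second step is to apply the implicit function theorem to
\[
  F(v, p) := -\L_K v + Vv - \lambda_2 \abs{v}^{p-2} v
\]
near $(u_*, 2)$. At $p=2$, the equation $F(v,2) = 0$ is linear and its solution set is exactly $E_2$. The partial differential $D_v F(u_*, 2) = -\L_K + V - \lambda_2$ has kernel $E_2$ and restricts to an isomorphism on the $H$-orthogonal complement $E_2^\perp$. Thus the IFT furnishes a $C^1$ map
\[
  \Psi \colon U \times (2 - \varepsilon, 2 + \varepsilon) \to E_2^\perp,
\]
where $U \subset E_2$ is a neighborhood of $u_*$, with $\Psi(v_0, 2) = 0$, parameterizing all solutions of $F(v,p) = 0$ close to $(u_*, 2)$ as $v = v_0 + \Psi(v_0, p)$; since $\Psi$ takes values in $E_2^\perp$, this $v_0$ is exactly $P_2 v$, the $H$-orthogonal projection of $v$ onto $E_2$. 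For $n$ large, $v_{p_n}$ falls into this parameterization with $v_0 = P_2 v_{p_n}$.

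The third step exploits equivariance. A reflection $R$ with $R(\Omega) = \Omega$ (and, as implicit in the statement, with $V \circ R = V$) induces an isometry $R^*$ of $H$ that commutes, thanks to the evenness of $K$, with $\L_K$, with multiplication by $V$, and with $v \mapsto \abs{v}^{p-2} v$. Hence $F(R^* v, p) = R^* F(v, p)$, and local uniqueness in the IFT forces
\[
  \Psi(R^* v_0, p) = R^* \Psi(v_0, p).
\]
If $P_2 v_{p_n}$ is $R$-symmetric (resp.\ antisymmetric), so is $\Psi(P_2 v_{p_n}, p_n)$, hence so is $v_{p_n}$, and by positivity of the rescaling factor so is $u_{p_n}$ (and its projection onto $E_2$). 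Applying the argument to every convergent subsequence yields the claim for all $p$ close to~$2$.

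The main difficulty I expect is the rigorous setup of the IFT step in the fractional framework: checking that $F$ is jointly $C^1$ in $(v, p)$ near $(u_*, 2)$ and that $D_v F(u_*, 2)$ genuinely induces an isomorphism of $E_2^\perp$. This is precisely where the hypothesis on $E_2$ is essential: it yields the pointwise convergence $(p-1)\abs{u_*}^{p-2} \to 1$ a.e.\ as $p \to 2^+$, which is what controls the continuity of $D_v F$ in the parameter $p$ and sidesteps the singular behavior of $\abs{v}^{p-2}$ on the set $\{v = 0\}$.
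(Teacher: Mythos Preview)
Your outline is in the right spirit — reduce to the $E_2$-projection and use equivariance — but the implicit function theorem step has a genuine regularity gap. The standard IFT requires $F$ to be $C^1$ on a full $H$-neighborhood of $(u_*,2)$, and this fails here: for any $v\in H$ close to $u_*$ whose zero set has positive measure (such $v$ exist arbitrarily close to $u_*$ in $H$), one has $(p-1)\abs{v}^{p-2} \to \mathbf{1}_{\{v\neq 0\}}$ pointwise as $p\to 2^+$, so $D_vF(v,p)$ does \emph{not} converge to $D_vF(v,2)=-\L_K+V-\lambda_2$. Thus $D_vF$ is discontinuous at $(v,2)$ and $F$ is not $C^1$ near $(u_*,2)$. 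The zero-measure hypothesis only controls functions in $E_2$, not all nearby $v\in H$, so it does not rescue the $C^1$ property you need.

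The paper sidesteps this entirely by proving only the uniqueness that the argument actually requires, without parameterizing solutions. Given two solutions $u_p$ and $v_p$ with the same $E_2$-projection, their difference $u_p-v_p$ solves $(-\L_K+V)(u_p-v_p)=a_p(x)(u_p-v_p)$ with $a_p(x)=(p-1)\lambda_2\int_0^1\abs{v_p+\theta(u_p-v_p)}^{p-2}\,\mathrm{d}\theta$. Here the coefficient is evaluated only along the actual solutions, which both converge to the same $\phi_i\in E_2\setminus\{0\}$; the zero-measure hypothesis on $\phi_i$ then gives $a_{p_n}\to\lambda_2$ in $L^{N/(2s)}$, and an elementary spectral-gap estimate (Lemma~\ref{Lem:uniq}) forces $u_p=v_p$ once $p$ is close to~$2$. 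Applying this with $v_p=g\,u_p$ for $g$ in the isotropy group of $P_{E_2}u_p$ yields the symmetry. Your equivariance idea is exactly right; what changes is that the ``$\Psi$'' map is replaced by a direct comparison of two solutions, which never needs to linearize at a generic $v\in H$.
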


Note that the assumption about the zero sets are known to hold
when $-\L_K=(-\Delta)^s$ (see \cite{felli}).
We are also able to localize least energy nodal solutions when $p
\approx 2$, see Theorem~\ref{thm:localization}.

These theorems are a generalization of the corresponding results
for the semi-linear case~\eqref{eq:pbmS}.
In~\cite{bbg,bbgv,g,gt}, this equation has been
extensively studied for several choices of potentials~$V$, boundary
conditions and non-linearities. We also refer the reader to the
references therein.

In this paper, $\abs{\cdot}_p$  will denote the
traditional norm in $L^p$. The notation $f'(u)[v]$ stands for  the
Fr\'echet derivative of
the function $f$ at $u$ in the direction $v$.

\section{Formulation, ground state and
  least energy nodal solutions}
\label{sec:setting}

\noindent
In~\cite{valdinoci}, it is proved that $H$ is a Hilbert space when
endowed with the norm
\begin{equation*}
  \norm{u}^2_H = \int_{Q} \abs{u(x) - u(y)}^2 K(x-y)\intd x
  \intd y.
\end{equation*}
Moreover, the embedding $H \hookrightarrow L^q(\IR^N)$ is continuous
for $q\in [1, 2^*_s]$ and it
is compact when  $q < 2^*_s$. In particular, the Sobolev's
inequality holds: there exists $C>0$ such that, for any
$u\in H$, $\abs{u}_{2^*_s}\le C \norm{u}_H$.

At this point, we may define the functional
\begin{equation}
  \label{energy}
  \E_p : H\to \IR : u \mapsto \frac{1}{2} \norm{u}^2_{H} +
  \frac{1}{2}\int_\Omega V(x)u^2 \intd x
  - \frac{1}{p} \abs{u}_{p}^p.
\end{equation}
whose corresponding Euler-Lagrange equation is the weak formulation
of Problem~\eqref{eq:pbmP} (see~\cite{valdinoci2}):
for $u\in H$,
\begin{equation*}
  \forall \phi\in H,\quad
  \int_{\IR^{2N}} \bigl(u(y)-u(x)\bigr) \bigl(\phi(y)-\phi(x)\bigr)
  K(y-x) \intd y\intd x
  + \int_{\Omega} V(x) u\phi \intd x
  = \int_{\Omega} \abs{u}^{p-2}u\phi \intd x.
\end{equation*}
To establish the existence of ground state solutions, R.~Servadei and
E.~Valdinoci~\cite{valdinoci} assume that $-\L_K + V$ is positive
definite and make use of the traditional Mountain Pass Theorem.  They
minimize~$\E_p$ on the following Nehari manifold
\begin{equation}
  \label{nehari}
  \mathcal{N}_p := \Bigl\{ u\in H\setminus\{0\} :
  \int_{\IR^{2N}} \bigl(u(x)-u(y)\bigr)^2  K(x-y) \intd x\intd y
  + \int_{\Omega} V(x) u^2 \intd x
  = \int_{\Omega} \abs{u}^{p} \intd x \Bigr\}.
\end{equation}
Up to our knowledge, there is no characterization of  sign-changing solutions
mentioned in the literature for the non-local case. 
We prove hereafter that a least
energy nodal solution exists and may be
characterized as a minimum of $\E_p$ on the traditional nodal Nehari set
\begin{equation}
  \label{nodalnehari}
  \mathcal{M}_p := \bigl\{ u\in H\setminus\{0\} : u^\pm\ne 0 \text{ and
  }\E_p(u) = \max_{t^+,\ t^- >0} \E_p (t_p^+
  u^+ + t_p^- u^-) \bigr\},
\end{equation}
where $u^+ := \max\{u,0\}$ and $u^- := \min\{u,0\}$.
Let us remark that an important technical difference with the
classical local semilinear occurs: $\langle u^+,u^-\rangle_H = -2
\int_\Omega\int_\Omega u^+(x) u^-(y) K(x-y)\intd x\intd y > 0$ for any
sign-changing solutions $u\in H$.

Let us mention that $u \in \mathcal{M}_p$ is equivalent to
$u^\pm\ne 0$ and
$\E_p'(u)[u^{\pm}]=0$.  Indeed,  it is clear that if $u\in
\mathcal{M}_p$ then
$\E_p'(u)[u^{\pm}]=0$. For the other direction, let us first remark
that, for any
sign-changing function $u\in
H$, there exists $t^+ >0$ and $t^->0$ such that $t^+
u^+ + t^- u^-\in \mathcal{M}_p$.\label{proj-nodal-Nehari}
This comes easily from the fact that $\E(u) \to -\infty$ when
$\norm{u}_H \to +\infty$ while being constrained to any finite
dimensional subspace, and the fact that the maximum cannot be on the
boundary of the cone because $\E'_p(u^+)[u^-] = \langle u^+, u^-
\rangle_H > 0$ and $\E'_p(u^-)[u^+] > 0$.
It is thus sufficient to show that the system
$\E_p'(t^+u^+ + t^- u^-)[u^{\pm}]=0$ has at most one non-trivial solution
$(t^+,t^-)$ with $t^\pm >0$.
Developing the equations
$\E_p'(t^+u^+ + t^- u^-)[u^{\pm}]=0$ leads to a system of the type
\begin{equation*}
  \begin{cases}
    t^- = A (t^+)^{p-1} - B t^+ ,\\
    t^+ = C (t^-)^{p-1} - D t^-,
  \end{cases}
\end{equation*}
for some $A,B,C,D >0$.  As a quick drawing will convince you, one can
see the solutions of this system as the intersection of two increasing
functions of~$t^+$, one that it super-quadratic and one that is
sub-quadratic.  Hence a single intersection exists.

\begin{details}
  Let $\phi(t^+) := A (t^+)^{p-1} - B t^+$ (super-quadratic) and
  $\psi^{-1}(t^-) := C (t^-)^{p-1} - D t^-$ (on the set where $C
  (t^-)^{p-1} - D t^- > 0$, where the expression is increasing w.r.t.\
  $t^-$, hence invertible and $\psi$ is sub-quadratic).  At a $t^+ >
  0$ s.t.\ $\phi(t^+) = \psi(t^+)$, one wants to prove that
  $\partial\phi(t^+) > \partial\psi(t^+)$ i.e.,
  \begin{equation*}
    (p-1) A (t^+)^{p-2} - B
    = \partial\phi(t^+)
    > \frac{1}{\partial\psi^{-1} \circ \psi(t^+)}
    = \frac{1}{(p-1)C (t^-)^{p-2} - D} \Bigr|_{t^- = \psi(t^+)}
  \end{equation*}
  i.e.,
  \begin{equation*}
    \bigl( (p-1) A (t^+)^{p-2} - B \bigr)
    \bigl( (p-1)C (t^-)^{p-2} - D \bigr) > 1.
  \end{equation*}
  Multiplying by $t^+ t^-$ and using the fact that $(t^+, t^-)$ is a
  solution of the system, one can rewrite it as
  \begin{equation*}
    \bigl( (p-1) t^- + (p-2)B t^+ \bigr)
    \bigl( (p-1) t^+ + (p-2)D t^- \bigr) > t^+ t^-.
  \end{equation*}
  This holds because all terms are positive and one of these terms is
  $(p-1) t^- (p-1) t^+$ which is $> t^+ t^-$ because $p-1 > 1$.
\end{details}

Note that, contrarily to the local case, it is not
true that  $u^\pm \in
\mathcal{N}_p$ if and only if $u^+ + u^-\in
\mathcal{M}_p$.
This is again a consequence of the fact that
$\langle u^+,u^-\rangle_H \ne 0$ for sign-changing functions.

Now let us show that \eqref{eq:pbmP} possesses at least one least
energy nodal solution.  In doing so, we will prove again, as a
byproduct, that non-negative and non-positive solutions also exist.
We take our inspiration from~\cite{Bartsch-Liu-Weth04}.

Let $\norm{\cdot}$ be the norm on $H$, equivalent to $\norm{\cdot}_H$
(see Section~\ref{sec:a priori}),
induced by the inner product
\begin{equation}
  \label{eq:equiv-inner-prod}
  \langle u, v\rangle
  = \langle u, v\rangle_H + \int_\Omega V(x)uv \intd x.
\end{equation}
We know that $\nabla{\mathcal E}_p(u)=u-A(u)$, namely
\begin{equation*}
  \E_p'(u)[\varphi]
  = \langle u-A(u) , \varphi\rangle,
  \qquad \langle A(u), \varphi \rangle :=\int_\Omega |u|^{p-2}u\varphi,
  \qquad u,\varphi\in H.
\end{equation*}
If $H^\pm$ denote the positive and negative cones of $H$, we set
$$
H^\pm_\varepsilon:=\big\{u\in H: {\rm dist}(u,H^\pm)<\varepsilon\big\}.
$$
Then, we first state the following

\begin{Lem}[Order perserving property]
  \label{max-princ}
  Assume that $-\L_k + V$ is positive definite
  and let $u\in H$ be such that
  \begin{equation}
    \label{disV}
    \langle u, \phi \rangle \ge 0,
    \quad\text{for every $\varphi\in H$ with $\varphi\geq 0$}.
  \end{equation}
  Then $u\geq 0$.
\end{Lem}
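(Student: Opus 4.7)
The plan is to test the hypothesis \eqref{disV} with $\varphi = -u^- \geq 0$ and to exploit the observation, already made after equation \eqref{nodalnehari}, that $\langle u^+, u^-\rangle_H \geq 0$.

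First I would check that $-u^-$ is an admissible test function, i.e.\ that $u^- \in H$. Since $u$ vanishes a.e.\ outside $\Omega$, so do $u^\pm$; and from the pointwise identity
\begin{equation*}
  \bigl(u(x)-u(y)\bigr)^2
  = \bigl(u^+(x)-u^+(y)\bigr)^2 + \bigl(u^-(x)-u^-(y)\bigr)^2
    - 2\bigl(u^+(x)u^-(y) + u^+(y)u^-(x)\bigr),
\end{equation*}
combined with $u^+ \geq 0 \geq u^-$ (which makes the last bracketed term nonpositive), one reads off that $(u^\pm(x)-u^\pm(y))^2 K(x-y)$ is integrable on~$Q$.

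Next, writing $u = u^+ + u^-$ and plugging $\varphi = -u^-$ into \eqref{disV} yields $\langle u, u^-\rangle \leq 0$. On the other hand,
\begin{equation*}
  \langle u, u^-\rangle
  = \langle u^+, u^-\rangle + \norm{u^-}^2.
\end{equation*}
Since $u^+ u^- \equiv 0$, the $V$-contribution in $\langle u^+, u^-\rangle$ disappears, so $\langle u^+, u^-\rangle = \langle u^+, u^-\rangle_H$, and a direct expansion using the symmetry of $K$ and the vanishing of $u^+(x) u^-(x)$ gives
\begin{equation*}
  \langle u^+, u^-\rangle_H
  = -2 \int_\Omega\!\int_\Omega u^+(x)\, u^-(y)\, K(x-y)\intd x\intd y \;\geq\; 0,
\end{equation*}
because $u^+ \geq 0$, $u^- \leq 0$ and $K > 0$. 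Combining the two inequalities gives $0 \geq \langle u, u^-\rangle \geq \norm{u^-}^2$, and the positive definiteness of $-\L_K + V$ ensures that $\norm{\cdot}$ is a genuine norm (equivalent to $\norm{\cdot}_H$), so $\norm{u^-} = 0$ forces $u^- = 0$, i.e.\ $u \geq 0$.

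The only genuinely nonlocal point — and the reason this is not a one-line argument — is that $\langle u^+, u^-\rangle_H$ no longer vanishes as in the local case. Fortunately its sign is the favorable one: it \emph{strengthens}, rather than obstructs, the lower bound on $\langle u, u^-\rangle$, so the direct expansion above is all that is needed.
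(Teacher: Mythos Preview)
Your proof is correct and follows essentially the same approach as the paper: test \eqref{disV} with $\varphi=-u^-$, use that $\langle u^+,u^-\rangle=\langle u^+,u^-\rangle_H\ge 0$, and conclude $\norm{u^-}=0$. You add the verification that $u^-\in H$ and a bit of commentary, but the argument is otherwise identical.
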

\begin{proof}
  Testing inequality~\eqref{disV} with $\phi = - u^-\in H^+$ yields
  \begin{equation*}
    \langle u, -u^- \rangle
    = - \langle u^+, u^- \rangle
    - \norm{u^-}^2
    \ge 0.
  \end{equation*}
  As $\langle u^+, u^- \rangle = \langle u^+, u^- \rangle_H \ge 0$,
  we get $\norm{u^-}^2 = 0$ and thus $u^- = 0$ since
  $-\L_k + V$ is positive definite.
\end{proof}

\begin{Lem}
\label{charact}
For every $\varepsilon>0$ sufficiently small, 
  $A(\partial H^\pm_\varepsilon)\subseteq H^\pm_\varepsilon.$
  In particular, if $u\in H^\pm_\varepsilon$ is a critical point
  of $\E_p$,
namely $A(u)=u$, then $u\in H^\pm.$
\end{Lem}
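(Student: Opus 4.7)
The plan is to establish a nonlinear contraction of the form
\[
  \norm{A(u)^-} \le C\, \abs{u^-}_p^{p-1}
\]
and then to convert the $L^p$ quantity on the right into $\dist(u, H^+)$. Exploiting $p-1>1$ will then make $\dist(A(u), H^+)$ genuinely smaller than $\dist(u, H^+)$ for $u\in\partial H^+_\varepsilon$ when $\varepsilon$ is small.

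For the contraction, I would set $w:=A(u)$ and test the defining identity $\langle w,\varphi\rangle=\int_\Omega\abs{u}^{p-2}u\varphi$ with $\varphi=-w^-\ge0$. Using that $\langle w^+,w^-\rangle=\langle w^+,w^-\rangle_H\ge0$ (the $L^2$-part of $\langle\cdot,\cdot\rangle$ vanishes because $w^+w^-=0$ pointwise, and the $H$-part is nonnegative, as noted in the introduction), the identity reduces to
\[
  \norm{w^-}^2 \le \int_\Omega \abs{u}^{p-2} u\, w^-.
\]
Splitting $\abs{u}^{p-2}u=(u^+)^{p-1}-(-u^-)^{p-1}$ and discarding the $(u^+)^{p-1}$-contribution, which already has the favourable (non-positive) sign since $w^-\le 0$, Hölder's inequality together with the Sobolev embedding $H\hookrightarrow L^p$ (and the equivalence between $\norm{\cdot}$ and $\norm{\cdot}_H$) produce the advertised bound.

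Next, to control $\abs{u^-}_p$ by $\dist(u,H^+)$, let $v$ be the $\norm{\cdot}$-projection of $u$ onto the closed convex cone $H^+$. The variational inequality $\langle v-u,\psi\rangle\ge 0$ for every $\psi\in H^+$ combined with Lemma~\ref{max-princ} yields $v\ge u$ pointwise; combined with $v\ge 0$, this gives $v\ge u^+$ and hence $\abs{u-v}=v-u\ge\abs{u^-}$ pointwise. Sobolev then provides $\abs{u^-}_p\le\abs{u-v}_p\le C_S\dist(u,H^+)$. Combining the two steps, for $u\in\partial H^+_\varepsilon$,
\[
  \dist(A(u), H^+) \le \norm{A(u)^-} \le C_1 \varepsilon^{p-1},
\]
and $C_1\varepsilon^{p-1}<\varepsilon$ as soon as $\varepsilon<(1/C_1)^{1/(p-2)}$, yielding $A(u)\in H^+_\varepsilon$.

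For the second assertion, applied to a critical point $u=A(u)$ the contraction together with Sobolev gives $\norm{u^-}\le C'\norm{u^-}^{p-1}$, whence the dichotomy $u^-=0$ or $\norm{u^-}\ge c_0:=(1/C')^{1/(p-2)}>0$. On the other hand $u\in H^+_\varepsilon$ combined with the previous display forces $\norm{u^-}\le C_1\varepsilon^{p-1}$, which is $<c_0$ for $\varepsilon$ small, ruling out the second alternative and forcing $u\in H^+$. The $H^-$ case follows by applying the $H^+$ statement to $-u$, using $A(-u)=-A(u)$. The main technical delicacy, absent in the local case, is that $u^+$ is \emph{not} the $\norm{\cdot}$-projection of $u$ onto $H^+$: indeed $\langle u^+,u^-\rangle_H>0$ for sign-changing $u$, as highlighted by the authors. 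The role of Lemma~\ref{max-princ} in the second step is precisely to bypass this obstruction and recover the pointwise inequality $\abs{u-v}\ge\abs{u^-}$ on which the Sobolev conversion relies.
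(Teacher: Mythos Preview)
Your proof is correct and follows essentially the same strategy as the paper's: establish a contraction $\dist(A(u),H^\pm)\le C\,\dist(u,H^\pm)^{p-1}$ via H\"older, Sobolev, and the order-preserving property (Lemma~\ref{max-princ}), and then exploit $p-1>1$. The only cosmetic difference is a swap of roles: you test the defining identity of $A$ against the pointwise truncation $-A(u)^-$ and invoke the metric projection (together with Lemma~\ref{max-princ}) to convert $\abs{u^-}_p$ into $\dist(u,H^+)$, whereas the paper tests against the metric projection $P_+A(u)$ and uses the elementary pointwise bound $\abs{u^+}_p=\min_{w\in H^-}\abs{u-w}_p\le C\,\dist(u,H^-)$ on the $u$ side.
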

\begin{proof}
We have, for $2\leq p\leq 2^*_s$,
\begin{equation}
\label{prima}
  \forall u \in H,\qquad
  \abs{u^+}_p = \min_{w\in H^-}\abs{u-w}_p
  \le C \min_{w\in H^-}\|u-w\| = C \dist(u,H^-).
\end{equation}
  Let $P_+$ denotes the metric projector on the positive cone $H^+$
  for the norm~$\norm{\cdot}$.
  The metric projection on the convex set $H^+$ is
  characterized by
  \begin{equation}
    \label{eq:proj-convex}
    \forall \phi \in H^+,\qquad
    \langle u - P_+u,\, \phi - P_+ u \rangle \le 0.
  \end{equation}
  Because $H^+$ is a cone pointed at~$0$, this is equivalent to
  \begin{equation}
    \label{eq:proj-cone}
    \langle u - P_+u,\, P_+ u \rangle = 0
    \qquad\text{and}\qquad
    \forall \phi \in H^+,\quad
    \langle u - P_+u,\, \phi \rangle \le 0.
  \end{equation}
  The implication $\eqref{eq:proj-cone} \Rightarrow
  \eqref{eq:proj-convex}$ is obvious.  For $\eqref{eq:proj-convex}
  \Rightarrow \eqref{eq:proj-cone}$, taking $\phi = t P_+u$ with $t >
  0$ in~\eqref{eq:proj-convex} yields $(t-1)\langle u - P_+u,\, P_+ u
  \rangle \le 0$, whence $\langle u - P_+u,\, P_+ u \rangle = 0$.

  Consequently, if we set
  $P_-u := u-P_+u$, then $P_- u$ is orthogonal to $P_+u$. 
  Moreover, since $\langle P_-u, \varphi\rangle\leq 0$
  for every $\varphi\in H^+$, it follows that
  $P_- u\leq 0$ by virtue of Lemma~\ref{max-princ}. If $v:=A(u),$ then
  taking inequality \eqref{prima} into account,
  \begin{align*}
    \dist\bigl(A(u),H^-\bigr) \norm{P_+v}
    &\le \norm{v-P_- v} \norm{P_+v}
    = \norm{P_+v}^2 \\
    & = \langle v, P_+v\rangle
    =\int_{\Omega} \abs{u}^{p-2}u \, P_+v
    \le \int_{\Omega} \abs{u^+}^{p-2}u^+ \, P_+v \\
    &\le \abs{u^+}_p^{p-1} \, \abs{P_+ v}_p
    \le C \dist(u,H^-)^{p-1} \norm{P_+v}.
  \end{align*}
  If $\dist(u,H^-)$ is small enough, we have 
  $\dist(A(u),H^-) \le \frac{1}{2} \dist(u,H^-)$,
concluding the proof.
\end{proof}

According to~\cite[Lemma 3.2]{LiuSun},
Lemma \ref{charact} implies the existence of a pseudo-gradient vector 
field ${\mathcal G}$ such that $H^\pm_\varepsilon$ are (forward) invariant for the
descending flow. Let us denote the flow by $\eta$ i.e., $\eta(\cdot,
u)$ is
the maximal solution to
\begin{equation*}
\begin{cases}
    \partial_t \eta(t,u) = - {\mathcal G}\bigl(\eta(t,u)\bigr), \\
\eta(0,u)=0,
\end{cases}  
\end{equation*}
defined on the interval $\bigl[0,T(u)\bigr)$.
It follows that $\partial H^\pm_\varepsilon\subseteq
\A(H^\pm_\varepsilon)$, where $\A(H^\pm_\varepsilon)$
stands for the basin of attraction of $H^\pm_\varepsilon$
for the flow~$\eta$.

First, we need the following
\begin{Lem}
  \label{basin-of-0}
For $\varepsilon>0$ sufficiently small, $\overline{H^+_\varepsilon}\cap
  \overline{H^-_\varepsilon} \subseteq {\mathscr A}_0,$
  where ${\mathscr A}_0$ is
  the basin of attraction of $0$.  In particular,
  $\E_p(u) > 0$ for every
$u\in \overline{H^+_\varepsilon}\cap \overline{H^-_\varepsilon}\setminus\{0\}.$
\end{Lem}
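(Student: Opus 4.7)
The plan is to show that on the intersection of the two ``fat cones'' $\overline{H^+_\varepsilon}$ and $\overline{H^-_\varepsilon}$, the $L^p$-norm is forced to be small in terms of $\varepsilon$, which will make $\E_p$ strictly positive and coercive enough that the descending flow attracts every trajectory to the origin.

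First I would control $\abs{u}_p$. If $u \in \overline{H^+_\varepsilon}$, applying inequality~\eqref{prima} to $-u$ yields $\abs{u^-}_p \le C \dist(u, H^+) \le C\varepsilon$; symmetrically, $\abs{u^+}_p \le C\varepsilon$ when $u \in \overline{H^-_\varepsilon}$. For $u$ in the intersection one thus obtains $\abs{u}_p \le 2C\varepsilon$. Combining with the Sobolev embedding $H\hookrightarrow L^p$ (which gives $\abs{u}_p \le C'\norm{u}$), I deduce
\begin{equation*}
  \abs{u}_p^p
  \le (2C\varepsilon)^{p-2}\abs{u}_p^2
  \le (2C\varepsilon)^{p-2}(C')^2 \norm{u}^2 .
\end{equation*}
For $\varepsilon$ small enough that $(2C\varepsilon)^{p-2}(C')^2 \le p/4$, this immediately yields $\E_p(u) \ge \tfrac{1}{4}\norm{u}^2$ on $\overline{H^+_\varepsilon} \cap \overline{H^-_\varepsilon}$, which is the ``in particular'' assertion.

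For the basin-of-attraction inclusion, the same bound produces $\E_p'(u)[u] = \norm{u}^2 - \abs{u}_p^p \ge \tfrac{1}{2}\norm{u}^2$, whence $\norm{\nabla\E_p(u)} \ge \tfrac{1}{2}\norm{u}$ on the intersection. By Lemma~\ref{charact} together with the Liu--Sun pseudo-gradient construction invoked in the text, both $\overline{H^+_\varepsilon}$ and $\overline{H^-_\varepsilon}$ are forward-invariant under the flow $\eta$, hence so is their intersection. Fix $u_0$ there and write $\eta_t := \eta(t, u_0)$. The energy $\E_p(\eta_t)$ is non-increasing and bounded below by $0$, hence converges to some $\E_\ast \ge 0$. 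From $\E_p \le \tfrac{1}{2}\norm{\cdot}^2$ one gets $\norm{\eta_t}^2 \ge 2\E_p(\eta_t) \ge 2 \E_\ast$. If $\E_\ast > 0$, then $\norm{\nabla\E_p(\eta_t)}$ stays uniformly bounded away from $0$, and the standard pseudo-gradient inequality $\tfrac{d}{dt}\E_p(\eta_t) \le -\norm{\nabla\E_p(\eta_t)}^2$ forces $\E_p(\eta_t) \to -\infty$, a contradiction. Hence $\E_\ast = 0$, and then $\norm{\eta_t}^2 \le 4\E_p(\eta_t) \to 0$, so $\eta_t \to 0$, i.e.\ $u_0 \in \A_0$.

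The main point requiring care is exhibiting a single $\varepsilon$ that validates simultaneously Lemma~\ref{charact} and the quadratic lower bound $\E_p \ge \tfrac{1}{4}\norm{\cdot}^2$; both requirements reduce to taking $\varepsilon$ small in terms of $p$ and finitely many structural constants (Sobolev, norm-equivalence), so one simply takes the minimum of the thresholds that appear. A secondary point is to make sure the pseudo-gradient is normalised so that the descent inequality actually holds with the constant one needs, but this is standard.
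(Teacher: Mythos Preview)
Your argument is correct, and in fact sharper than the paper's.  The paper only establishes the crude lower bound $\E_p(u) \ge -C(2\varepsilon)^p/p$ on $H^+_\varepsilon \cap H^-_\varepsilon$; it then invokes the Palais--Smale dichotomy (either the energy along the flow diverges to $-\infty$, or the trajectory has a critical limit point), rules out the first alternative via that lower bound, and uses that $0$ is the sole critical point in the invariant set to conclude the inclusion in $\A_0$.  The positivity of $\E_p$ is then \emph{deduced a posteriori} from the fact that the flow decreases energy towards $\E_p(0)=0$.  You instead prove the coercive bound $\E_p(u) \ge \tfrac14\norm{u}^2$ directly---obtaining positivity without touching the flow---and replace the Palais--Smale step by an explicit ODE argument driven by the quantitative gradient estimate $\norm{\nabla\E_p(u)} \ge \tfrac12\norm{u}$.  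Your route is more self-contained (no Palais--Smale needed) and yields an explicit rate; the paper's is a bit shorter because it leans on machinery already in place.  One small point you leave implicit: global existence of the flow on the intersection follows from the a~priori bound $\norm{\eta_t}^2 \le 4\E_p(u_0)$ together with boundedness of the pseudo-gradient on bounded sets, which is routine but worth a sentence.
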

\begin{proof}
  We know that, for $\epsilon > 0$ small enough,
  $H^+_\varepsilon \cap H^-_\varepsilon$ is (forward) invariant for $\eta$
  and the sole critical point it
  contains is $0$.  The map $\eta$ being a pseudo-gradient flow
  and the Palais-Smale condition is satisfied, either
  $\E_p\bigl(\eta(t,u)\bigr) \to -\infty$ as $t\to T(u)$ or
  $\eta(t,u)$ possesses a limit point $u^*$ as $t \to T(u)$
  which is a critical point of
  $\E_p$, in which case $T(u) = +\infty$.  Also, if
$u\in H^+_\varepsilon \cap H^-_\varepsilon$, then the second case rephrases as
$\eta(t,u)\to 0$ as $t\to T(u)$. To conclude, we need to rule out the first case. 
Taking into account inequalitites \eqref{prima}, it follows that
  \begin{equation*}
    \E_p(u) \ge -\frac{1}{p}\abs{u}_p^p
    \ge -\frac{1}{p}\bigl( \abs{u^+}_p + \abs{u^-}_p \bigr)^p
    \ge -\frac{C}{p} \bigl(\dist(u,H^-) + \dist(u,H^-)\bigr)^p
    \ge -\frac{C \, (2\varepsilon)^p}{p},
  \end{equation*}
whenever $u\in H^+_\varepsilon\cap H^-_\varepsilon$.
  Finally, the flow decreases the functional $\E_p$, so
  $\E_p(u) > \E_p\bigl(\eta(t,u)\bigr) > \E_p(0) = 0$
  for all $t > 0$.
\end{proof}

We now state the following
\begin{Thm}
  \label{exists-sign-changing}
There exist a solution to non-negative, a non-positive and a sign-changing solution to Problem \eqref{eq:pbmP}.
\end{Thm}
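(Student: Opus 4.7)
The plan is to combine the cone-invariance properties of Lemma~\ref{charact} and Lemma~\ref{basin-of-0} with a minimization argument on the Nehari manifold (for the signed solutions) and on the nodal Nehari set~$\mathcal{M}_p$ (for the sign-changing one), running a Bartsch--Liu--Weth type deformation along the pseudo-gradient flow~$\eta$. For the \emph{non-negative} and \emph{non-positive} solutions, I would set $c_\pm := \inf\{\E_p(u) : u \in \mathcal{N}_p \cap \overline{H^\pm_\varepsilon}\}$: non-emptiness holds by rescaling any non-zero element of $H^\pm$; the standard Nehari bound gives $\norm{u} \ge \delta > 0$ on $\mathcal{N}_p$ and hence $c_\pm > 0$; boundedness of minimizing sequences together with the compact embedding $H \hookrightarrow L^p(\Omega)$ yields a minimizer $\bar u_\pm$. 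A deformation argument using the forward invariance of $\overline{H^\pm_\varepsilon}$ under~$\eta$ shows that $\bar u_\pm$ is a critical point of~$\E_p$, and Lemma~\ref{charact} then upgrades $\bar u_\pm \in \overline{H^\pm_\varepsilon}$ to $\bar u_\pm \in H^\pm$.

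\textbf{Sign-changing minimization.} I would set $c := \inf_{\mathcal{M}_p} \E_p$. Non-emptiness of $\mathcal{M}_p$ was already noted in the discussion preceding Lemma~\ref{max-princ}: every sign-changing $u \in H$ admits a unique pair $t^\pm > 0$ with $t^+ u^+ + t^- u^- \in \mathcal{M}_p$. Writing $\E_p'(u)[u^\pm] = 0$ for $u \in \mathcal{M}_p$ and using $H \hookrightarrow L^p$ together with $\langle u^+, u^-\rangle_H > 0$ yields uniform lower bounds $\abs{u^\pm}_p \ge \delta > 0$ on $\mathcal{M}_p$, whence $c > 0$. For a minimizing sequence $(u_n) \subset \mathcal{M}_p$, the constraint bounds $\norm{u_n}$, so up to a subsequence $u_n \rightharpoonup \bar u$ in $H$ and $u_n \to \bar u$ in $L^p(\Omega)$; the lower bound on $\abs{u_n^\pm}_p$ passes to the limit, giving $\bar u^\pm \ne 0$. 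Projecting via the unique $\bar t^\pm > 0$, I let $\bar v := \bar t^+ \bar u^+ + \bar t^- \bar u^- \in \mathcal{M}_p$; by weak lower semi-continuity of $\norm{\cdot}_H$, strong $L^p$ convergence, and the $\max$ characterization of $\mathcal{M}_p$, I expect $\E_p(\bar v) \le \liminf_n \E_p(u_n) = c$, so $\bar v$ attains~$c$.

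\textbf{The main obstacle} is proving that $\bar v$ is actually a critical point of~$\E_p$, and not merely a constrained minimizer; the specifically non-local coupling $\langle u^+, u^-\rangle_H > 0$ blocks the usual reduction to two independent one-dimensional Nehari problems. The plan is a contradiction by deformation: assume $\E_p'(\bar v) \ne 0$, apply a quantitative deformation lemma to produce a continuous $\eta_1$ that strictly decreases $\E_p$ on a neighborhood of $\bar v$, and show that for $(t^+, t^-)$ close to $(\bar t^+, \bar t^-)$ the deformed family $(t^+, t^-) \mapsto \eta_1(t^+ \bar u^+ + t^- \bar u^-)$ still meets $\mathcal{M}_p$. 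This reduces to a Miranda/Brouwer-degree argument for the map $(t^+, t^-) \mapsto \bigl(\E_p'(\cdot)[\bar u^+], \E_p'(\cdot)[\bar u^-]\bigr)$, whose degree at $(\bar t^+, \bar t^-)$ is non-zero thanks to the monotonicity/uniqueness computation carried out in the excerpt's discussion of $\mathcal{M}_p$. Forward invariance of the cones under $\eta$, combined with Lemma~\ref{basin-of-0} (for $\varepsilon$ small, $c$ exceeds the energy on $\overline{H^+_\varepsilon} \cap \overline{H^-_\varepsilon}$), guarantees that the deformed surface remains sign-changing, so the re-projection onto $\mathcal{M}_p$ is well defined. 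The resulting element of $\mathcal{M}_p$ has energy strictly less than~$c$, contradicting the definition of~$c$; hence $\bar v$ is critical, and since $\bar v^\pm \ne 0$ it is the sought sign-changing solution of~\eqref{eq:pbmP}.
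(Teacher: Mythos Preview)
Your proposal is correct, but it follows a genuinely different route from the paper's proof of Theorem~\ref{exists-sign-changing}.

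\textbf{What the paper does.} The paper does \emph{not} minimize on $\mathcal{N}_p\cap\overline{H^\pm_\varepsilon}$ or on $\mathcal{M}_p$ here. Instead it invokes an abstract critical-point result of Liu--Sun (Theorem~3.2 in~\cite{LiuSun}) for which the required inputs are exactly Lemma~\ref{charact}, the invariance of $H^\pm_\varepsilon$ under the pseudo-gradient flow, and Lemma~\ref{basin-of-0}. The one remaining hypothesis to check is the existence of a path $h:[0,1]\to H$ with $h(0)\in H^+_\varepsilon\setminus H^-_\varepsilon$, $h(1)\in H^-_\varepsilon\setminus H^+_\varepsilon$, and $\sup_{t}\E_p(h(t))<0=\inf_{\overline{H^+_\varepsilon}\cap\overline{H^-_\varepsilon}}\E_p$. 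This path is produced by taking a two-dimensional span $E=\spanned\{u_0,u_1\}$ with $u_0\in H^+\setminus\{0\}$, $u_1\in H^-\setminus\{0\}$ and scaling the segment between them far enough out that $\E_p<0$ along it. Liu--Sun then delivers one critical point in each of $H^+_\varepsilon\setminus\overline{H^-_\varepsilon}$, $H^-_\varepsilon\setminus\overline{H^+_\varepsilon}$, and $H\setminus(\overline{H^+_\varepsilon}\cup\overline{H^-_\varepsilon})$; Lemma~\ref{charact} upgrades the first two to genuine signed solutions.

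\textbf{What you do and what each approach buys.} Your argument is the direct Nehari/nodal-Nehari minimization in the style of Bartsch--Weth--Willem: compactness gives a minimizer on $\mathcal{M}_p$, and a Miranda/degree deformation shows it is an unconstrained critical point. This is self-contained and yields in one stroke that the sign-changing solution realizes $\inf_{\mathcal{M}_p}\E_p$; in the paper, that last fact is obtained only \emph{afterwards}, in a separate theorem, by first using Theorem~\ref{exists-sign-changing} and then running a second flow argument on the boundary of the basin of attraction of~$0$. Conversely, the paper's route is shorter here because it outsources the hard work to~\cite{LiuSun} and avoids the weak-limit bookkeeping (showing $u_n^\pm\rightharpoonup \bar u^\pm$ in $H$, re-projecting, etc.) that your argument needs. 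One small remark on your write-up: in the Miranda step you do not actually need Lemma~\ref{basin-of-0} or cone invariance to keep the deformed two-parameter surface sign-changing --- a standard quantitative deformation lemma localized near $\bar v$ already does this, since the surface stays in a small $H$-neighborhood of $\bar v$ where both $(\cdot)^\pm$ are bounded away from zero in $L^p$. Invoking Lemma~\ref{basin-of-0} there is harmless but unnecessary.
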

\begin{proof}
  There exists a solution to Problem~\eqref{eq:pbmP}
  in $H^+_\varepsilon\setminus \overline{H^-_\varepsilon}$, a solution 
in $H^-_\varepsilon\setminus \overline{H^+_\varepsilon}$ and one solution in $H\setminus (\overline{H^+_\varepsilon}\cup
\overline{H^-_\varepsilon})$. This follows directly from \cite[Theorem 3.2]{LiuSun}, provided that
  one shows the existence of a path $h:[0,1]\to H$ such that
$h(0)\in H^+_\varepsilon \setminus  H^-_\varepsilon$, 
$h(1)\in H^-_\varepsilon \setminus H^+_\varepsilon$ and 
  \begin{equation*}
    0
    = \inf_{\overline{H^+_\varepsilon}\cap \overline{H^-_\varepsilon}} \E_p
    > \sup_{t\in [0,1]} \E_p\bigl(h(t)\bigr).  
  \end{equation*}
It is readily seen that for any finite dimensional subspace $E$ of $H$, 
there exists $R>0$ such that $u\in E$ and $\|u\|\geq R$ imply 
  that ${\mathcal E}_p(u)<0$.
  Pick $E:=\spanned\{u_0,u_1\}\subseteq H$,
  where $u_0\in H^+\setminus\{0\}$ and 
  $u_1 \in H^-\setminus\{0\}$ are non-colinear given elements.
  If $R > 0$ is the corresponding radius, set
  \begin{equation*}
    h(t) := R^* \bigl((1-t) u_0+t u_1\bigr)\in H\setminus\{0\},
    \quad t\in [0,1],
  \end{equation*}
  where $R^*$ is large enough
  so that $\min\bigl\{\norm{h(t)} :t\in [0,1] \bigr\}\ge R$.
  This ends the proof, thanks to Lemma~\ref{charact}.
\end{proof}

\noindent
Finally, we state the following
\begin{Thm}
  There exists a sign-changing solution $u^*$ to~\eqref{eq:pbmP}
  with minimal energy among all sign-changing solutions.
  In addition, $u^* \in \mathcal{M}_p$ achieves the minimum of
  $\E_p$ on~$\mathcal{M}_p$.
\end{Thm}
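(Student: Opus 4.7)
The plan is to split the statement into three tasks: (i) produce a minimizer $u^{*}$ of $\E_{p}$ on $\mathcal{M}_{p}$; (ii) show that $u^{*}$ is a critical point of $\E_{p}$; and (iii) verify that every sign-changing weak solution of \eqref{eq:pbmP} lies in $\mathcal{M}_{p}$, so that $\E_{p}(u^{*}) = c_{p} := \inf_{\mathcal{M}_{p}} \E_{p}$ is automatically the minimal energy among sign-changing solutions.

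For (i), the Nehari identity $\E_{p}'(u)[u] = 0$ holding on $\mathcal{M}_{p}$ gives $\|u\|^{2} = |u|_{p}^{p}$, hence
\begin{equation*}
  \E_{p}(u) = \Bigl(\tfrac{1}{2} - \tfrac{1}{p}\Bigr)\|u\|^{2}
  \qquad\text{on } \mathcal{M}_{p}.
\end{equation*}
A minimizing sequence $(u_{n})$ is therefore bounded in $H$. Combining $\E_{p}'(u_{n})[u_{n}^{\pm}] = 0$ with the positivity $\langle u_{n}^{+}, u_{n}^{-} \rangle_{H} > 0$ and the Sobolev embedding $H \hookrightarrow L^{p}$, one derives a uniform bound $\|u_{n}^{\pm}\| \ge c > 0$, in particular $c_{p} > 0$. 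Up to a subsequence, $u_{n} \rightharpoonup u^{*}$ weakly in $H$ and $u_{n} \to u^{*}$ strongly in $L^{p}$ and a.e., so $u_{n}^{\pm} \to (u^{*})^{\pm}$ in $L^{p}$ and $(u^{*})^{\pm} \ne 0$. The projection property recalled on page~\pageref{proj-nodal-Nehari} then yields unique $t^{+}, t^{-} > 0$ with $v^{*} := t^{+}(u^{*})^{+} + t^{-}(u^{*})^{-} \in \mathcal{M}_{p}$. Because $u_{n} \in \mathcal{M}_{p}$ means exactly that $(1, 1)$ maximizes $(s, t) \mapsto \E_{p}(s u_{n}^{+} + t u_{n}^{-})$, one has $\E_{p}(t^{+} u_{n}^{+} + t^{-} u_{n}^{-}) \le \E_{p}(u_{n})$; passing to the limit via weak lower semicontinuity of $\|\cdot\|$ and $L^{p}$-continuity of the nonlinearity yields $\E_{p}(v^{*}) \le c_{p}$, and membership in $\mathcal{M}_{p}$ gives the reverse inequality, so $v^{*}$ attains $c_{p}$.

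The main obstacle is step (ii): upgrading this constrained minimum into an unconstrained critical point. I would use a Lagrange-multiplier / implicit-function argument. Writing $G^{\pm}(u) := \E_{p}'(u)[u^{\pm}]$, the set $\mathcal{M}_{p}$ is locally cut out near $v^{*}$ by $G^{+} = G^{-} = 0$; the Jacobian at $(1, 1)$ of $(s, t) \mapsto \bigl(G^{+}(s v^{*+} + t v^{*-}), G^{-}(s v^{*+} + t v^{*-})\bigr)$ coincides with the Hessian of the scalar map $(s, t) \mapsto \E_{p}(s v^{*+} + t v^{*-})$ at its strict maximum, hence is negative definite and invertible. Assume for contradiction that $\E_{p}'(v^{*}) \ne 0$ and pick $w \in H$ with $\E_{p}'(v^{*})[w] < 0$. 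Since $(v^{*})^{\pm} \ne 0$, the perturbation $v^{*} + \epsilon w$ is still sign-changing for small $\epsilon > 0$, and the implicit function theorem produces a $C^{1}$ curve $\gamma(\epsilon) := s(\epsilon)(v^{*} + \epsilon w)^{+} + t(\epsilon)(v^{*} + \epsilon w)^{-} \in \mathcal{M}_{p}$ with $\gamma(0) = v^{*}$. Differentiating and using $\E_{p}'(v^{*})[v^{*\pm}] = 0$ yields
\begin{equation*}
  \tfrac{d}{d\epsilon} \E_{p}(\gamma(\epsilon)) \Big|_{\epsilon = 0}
  = \E_{p}'(v^{*})[w] < 0,
\end{equation*}
contradicting the minimality of $v^{*}$ on $\mathcal{M}_{p}$.

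For (iii), if $u$ is any sign-changing weak solution of \eqref{eq:pbmP}, testing the equation with $u^{\pm}$ gives $\E_{p}'(u)[u^{\pm}] = 0$; by the uniqueness of the positive-multiplier pair recalled on page~\pageref{proj-nodal-Nehari}, this forces $u \in \mathcal{M}_{p}$. Consequently $\E_{p}(u) \ge c_{p} = \E_{p}(v^{*})$, so $u^{*} := v^{*}$ is a sign-changing solution of minimal energy among sign-changing solutions and simultaneously a minimizer of $\E_{p}$ on $\mathcal{M}_{p}$, as claimed.
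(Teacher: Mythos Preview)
Your overall strategy differs from the paper's. The paper is flow-based: it first minimizes $\E_p$ over the set of sign-changing \emph{critical points} (nonempty by Theorem~\ref{exists-sign-changing}), extracts the minimizer $u^*$ via Palais--Smale, observes that $u^*\in\mathcal{M}_p$, and then shows $\E_p(u^*)\le\E_p(v)$ for every $v\in\mathcal{M}_p$ by a descending-flow argument on the rectangle $\{t^+v^++t^-v^-:t^\pm\in[0,R]\}$. Your route---direct minimization on $\mathcal{M}_p$, then promoting the constrained minimizer to a free critical point---is closer in spirit to the Bartsch--Weth--Willem argument that the paper cites in the Remark following the theorem. Steps~(i) and~(iii) of your proposal are correct and standard.

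Step~(ii), however, has a genuine gap. Your implicit-function argument needs the map $\epsilon\mapsto(v^*+\epsilon w)^\pm$ to be $C^1$ into $H$, so that $\gamma$ is $C^1$ and the chain rule gives
$\tfrac{d}{d\epsilon}\E_p(\gamma(\epsilon))\big|_{\epsilon=0}=\E_p'(v^*)[\gamma'(0)]$.
But $u\mapsto u^\pm$ is only Lipschitz in $L^p$; it is not Fr\'echet differentiable as a map into $H$, and even its G\^ateaux derivative in $L^2$ (which exists only under the extra hypothesis $\bigl|\{v^*=0\}\bigr|=0$) is $w\chi_{\{v^*>0\}}$, which need not belong to $H$. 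Consequently neither the application of the implicit function theorem (which requires $C^1$ dependence on $\epsilon$ to yield a $C^1$ curve) nor the subsequent differentiation of $\E_p\circ\gamma$ is justified. The standard remedy---essentially what the paper's Remark points to---is to replace the IFT by the quantitative deformation lemma together with Miranda's theorem: deform a small disk in the cone $\{t^+v^{*+}+t^-v^{*-}:t^\pm>0\}$ by a pseudo-gradient flow to push the energy strictly below $c_p$, and then use the boundary behavior of the map $(t^+,t^-)\mapsto\bigl(\E_p'(\cdot)[(\cdot)^+],\,\E_p'(\cdot)[(\cdot)^-]\bigr)$ on the deformed image to force a zero, producing a point of $\mathcal{M}_p$ with energy below $c_p$. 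That route requires no smoothness of $u\mapsto u^\pm$.
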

\begin{proof}
  Lemma~\ref{charact} says that the only critical points of $\E_p$
inside $H^+_\varepsilon\cup H^-_\varepsilon$ are those belonging to $H^+\cup H^-$.
Thus, all the sign-changing critical points of ${\mathcal E}_p$ belong
to the closed set $H\setminus(H^+_\varepsilon\cup H^-_\varepsilon).$ Let us set
  \begin{equation*}
    c := \inf\{\E_p(u) : u \text{ is a sign-changing critical point of }
    \E_p \}.
  \end{equation*}
  Since Theorem~\ref{exists-sign-changing} says that the set is non-empty,
  there exists a sequence $(u_n)\subseteq H\setminus
  (H^+_\varepsilon\cup H^-_\varepsilon)$ such that
  $\E_p(u_n)\to c$, as $n\to\infty$.  Since $\E_p$ satisfies
the Palais-Smale condition, it follows that $(u_n)$ admits a subsequence
which converges strongly in $H$ to a limit point $u^*\in H\setminus(H^+_\varepsilon\cup H^-_\varepsilon),$
  such that $\E_p'(u^*)=0$ and $\E_p(u^*)=c$.

  As $u^*$ changes sign and $\E_p'(u^*)[u^\pm] = 0$, $u \in
  \mathcal{M}_p$ (see the discussion following~\eqref{nodalnehari}).
  To show that $u^*$ has minimal energy on $\mathcal{M}_p$,
  pick any $v \in \mathcal{M}_p$ and consider the rectangle
  $C := \{ t^+ v^+ + t^- v^- : t^+, t^- \in [0,R]\}$
  where $R$ is large enough so that $\E_p(t^+ v^+ + t^- v^-) < 0$
  whenever $t^+ = R$ or $t^- = R$.
  We will show that there exists $w \in C \setminus
  (H^+_\varepsilon\cup H^-_\varepsilon)$ such that $\bigl(
  \E_p(\eta(t, w)) \bigr)_{t \ge 0}$ is bounded from below.
  This will conclude the proof because, $\eta$ being a gradient flow,
  $\eta(t, w)$ must then posses a limit point $w^* \in H \setminus
  (H^+_\varepsilon\cup H^-_\varepsilon)$ which is a sign-changing
  critical point of~$\E_p$.  Thus $\E_p(v) \ge \E_p(w) \ge
  \E_p(\eta(t,w)) \ge \E_p(w^*) \ge c$ for all $t \in [0,+\infty)$.

  To prove the existence of~$w$, we follow an argument similar to
  the last one of the proof of Theorem~3.1 in~\cite{LiuSun}
  which we shall briefly explain.
  Let us start by considering $\A_0$, the basin of attraction of~$0$.
  The set $O := C \cap \A_0$ is a non-empty open subset in~$C$
  on which $\E_p \ge 0$.  By the choise of~$R$,
  $\bigl\{ t^+ v^+ + t^- v^- : (t^+, t^-) \in (\{R\} \times [0,R])
  \cup ([0,R] \times \{R\}) \bigr\} \cap O = \emptyset$.
  Consequently, there exists a connected component $\Gamma$ of
  $\partial O$ intersecting both $[0,R] v^+$ and $[0,R] v^-$.
  Thus $\Gamma \cap H^+ \ne \emptyset$ and $\Gamma \cap H^- \ne \emptyset$.
  Let $\A(H^+) = \A(H^+_\epsilon)$ (resp.\ $\A(H^-) =
  \A(H^-_\epsilon)$) be the basin of attraction of~$H^+$
  (resp.\ $H^-$), where $\epsilon$ is small enough.
  The sets $\Gamma \cap \A(H^+)$ and $\Gamma \cap \A(H^-)$ are
  non-empty open subsets of~$\Gamma$.
  Moreover they are disjoint because, if they weren't,
  Lemma~\ref{basin-of-0} would imply that $\Gamma \cap \A_0 \ne 0$
  but, on the other hand,
  $\Gamma \subseteq \partial O \subseteq \partial \A_0$ implies
  $\Gamma \cap \A_0 = \emptyset$.
  In conclusion there exists $w \in \Gamma \setminus \bigl(\A(H^+)
  \cap \A(H^-) \bigr)$.  It remains to show that
  $\bigl( \E_p(\eta(t, w)) \bigr)_{t \ge 0}$ is bounded from below.
  But this is clear because $\partial \A_0$ is forward invariant
  and, thanks again to Lemma~\ref{basin-of-0},
  $\E_p(u) > 0$ for all $u \in \partial \A_0$.
\end{proof}

\begin{Rem}
  Following the ideas
  of~\cite[proposition~3.1]{Bartsch-Weth-Willem-05}, it can be shown
  that any minimizer of~$\E_p$ on $\mathcal{M}_p$ is a sign-changing
  critical point of~$\E_p$.  Therefore, least energy nodal solutions
  can be characterized as for the local problem, namely as minimizers
  of the functional on the nodal Nehari set.  This is important from a
  numerical point of view as it gives a natural procedure for seeking
  such solutions.
\end{Rem}

\begin{details}
\begin{Prop}
  If $u\in\mathcal{M}_p$ satisfies $\E_p(u) =
  \min_{v\in\mathcal{M}_p}\E_p(v)$, then $u$ is a critical point
  of $\E_p$ and, so, a least energy nodal solution.
\end{Prop}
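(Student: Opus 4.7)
The plan is to argue by contradiction. Assume that $u \in \mathcal{M}_p$ minimises $\E_p$ on $\mathcal{M}_p$ but $\E_p'(u) \ne 0$, and set $c := \E_p(u)$. The strategy follows the classical nodal-Nehari scheme (in the spirit of Bartsch--Weth--Willem), adapted to the non-local setting: one combines a quantitative deformation lemma near $u$ with a planar Brouwer-degree argument on the two-parameter family $(t^+,t^-) \mapsto t^+ u^+ + t^- u^-$, relying crucially on the uniqueness statement already established in the discussion after~\eqref{nodalnehari}.

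First I would exploit the uniqueness of the projection onto $\mathcal{M}_p$: since $u \in \mathcal{M}_p$, the map
\begin{equation*}
  g(t^+, t^-) := \E_p(t^+ u^+ + t^- u^-)
\end{equation*}
has, on $(0,\infty)^2$, a unique critical point at $(1,1)$, which must therefore be its global strict maximum, with value $c$. The super-/sub-quadratic picture of the details block moreover gives, on a small square $Q_\sigma := [1-\sigma, 1+\sigma]^2$, that the planar vector field
\begin{equation*}
  \Psi_0(t^+, t^-)
  := \bigl( \E_p'(\Phi)[\Phi^+],\ \E_p'(\Phi)[\Phi^-] \bigr),
  \qquad \Phi := t^+ u^+ + t^- u^-,
\end{equation*}
points strictly outward on $\partial Q_\sigma$ for $\sigma$ small enough, so its Brouwer degree around $0$ on $\mathrm{int}\,Q_\sigma$ equals~$1$.

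Next, since $\E_p$ satisfies the Palais--Smale condition and $\E_p'(u) \ne 0$, a standard quantitative deformation lemma yields a continuous $\eta : H \to H$, supported in a small $H$-neighbourhood $U$ of $u$, with $\E_p \circ \eta \le \E_p$ everywhere and $\E_p(\eta(v)) \le c-\varepsilon$ whenever $v \in U$ and $\E_p(v) \le c+\varepsilon$, for some $\varepsilon > 0$. Choosing $\sigma$ so small that $g \le c - 2\varepsilon$ on $\partial Q_\sigma$, the deformed map $\tilde h(t^+,t^-) := \eta(\Phi(t^+, t^-))$ coincides with $\Phi$ on $\partial Q_\sigma$ and satisfies $\E_p(\tilde h) < c$ throughout $Q_\sigma$. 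Shrinking $U$ further, $\tilde h$ stays uniformly close (in $H$, hence in $L^p$) to $\Phi$, so the positive and negative parts of $\tilde h$ remain non-trivial on all of $Q_\sigma$.

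Finally, I would consider
\begin{equation*}
  \Psi(t^+, t^-) := \bigl( \E_p'(\tilde h)[\tilde h^+],\ \E_p'(\tilde h)[\tilde h^-] \bigr).
\end{equation*}
Since $\Psi = \Psi_0$ on $\partial Q_\sigma$, homotopy invariance gives $\deg(\Psi, \mathrm{int}\,Q_\sigma, 0) = 1$, so there exists $(t^+_*, t^-_*) \in \mathrm{int}\,Q_\sigma$ with $\Psi(t^+_*, t^-_*) = 0$. This means $\tilde h(t^+_*, t^-_*) \in \mathcal{M}_p$ with energy strictly below $c$, contradicting the minimality of $u$; therefore $\E_p'(u) = 0$, and $u$ is a least energy nodal solution.

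I expect the main obstacle to be the boundary sign analysis of $\Psi_0$ on $\partial Q_\sigma$: one must translate the uniqueness-of-projection argument (the super-/sub-quadratic crossing picture of the details block) into the strict-sign statement that each component of $\Psi_0$ has a definite sign on the appropriate edge of $Q_\sigma$. The non-local twist is the positive cross term $\langle u^+, u^-\rangle_H > 0$, which enters the explicit expression of $\E_p'(\Phi)[\Phi^\pm]$ through the constants $B,D$ of the system in the details block but does not disturb the monotonicity of the crossing functions; once this point is secured, the deformation-plus-degree mechanism delivers the result.
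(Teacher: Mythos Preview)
Your approach is essentially the paper's: argue by contradiction, work on the two-parameter plane spanned by $u^+$ and $u^-$, use that $(1,1)$ is the unique strict maximum of $g$ there, apply a quantitative deformation, and finish with a planar topological argument to land back on $\mathcal{M}_p$ at lower energy. The only cosmetic differences are that the paper invokes Miranda's theorem on a small ball $A\subset\pi$ rather than Brouwer degree on $Q_\sigma$, and phrases the boundary condition as ``$(\E'_p(v)[v^+],\E'_p(v)[v^-])$ points inside $A$'' --- indeed $\Psi_0$ points \emph{inward} near the strict maximum, not outward as you wrote, but in dimension~$2$ the degree is still~$1$, so the argument is unaffected.
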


\begin{proof}
  Let $c := \min_{v\in\mathcal{M}_p} \E_p(v)$ and $\pi := \{
  t^+ u^+ + t^- u^- : t^+, t^- >0 \}$. Let us assume by contradiction
  that $\norm{\E_p'(u)}_H = \varepsilon >0$. As $u\in
  \mathcal{M}_p$, $u$ changes sign and maximizes $\E_p$ on
  $\pi$. Moreover, $u$ is the unique point on $\pi$ which achieves
  this maximum.

  As $\E_p\in \mathcal{C}^2$, we may construct a ball
  $A\subset \pi$ such that $u\in A$, $0\notin A$, the energy on
  $\partial A$ is strictly less than $c$, the energy on $A$ belongs to
  $\bigl(c-\frac{\varepsilon}{2}, c + \frac{\varepsilon}{2}\bigr)$,
  $\norm{\E_p'(v)}_H>\frac{\varepsilon}{2}$ for any $v\in A$
  and, for any $v\in \partial A$, the vector $(\E'_p(v)[v^+],
  \E_p'(v)[v^-])$ has a direction inside $A$.

  By the traditional deformation lemma, there exists a deformation
  $\Gamma$ such that, for any $v\in A$, there exists $t_v >0$ such
  that, for any $0<t< t_v$, one has
  \begin{enumerate}
  \item $\Gamma(t,v)$ changes sign;
  \item $\E_p(\Gamma(t,v)) < \E_p(v) \le \E_p(u)=c$.
  \end{enumerate}
  By compactness of $A$, we get the existence of $t>0$ such that
  \begin{enumerate}
  \item $\Gamma(t,v)$ changes sign for any $v\in \pi$;
  \item $\Gamma(t,v) = v$ for any $v\in \pi\setminus A$;
  \item $\E_p(\Gamma(t,v)) < \E_p(u)$ for any $v\in A$.
  \end{enumerate}
  So, for any $v\in \pi$, $\E_p(\Gamma(t,v))
  <\E_p(u) =c$.

  To conclude, we then consider
  \begin{equation*}
    \psi : \pi \to \IR\times\IR : v\mapsto
    \bigl(\E_p'(\Gamma(t,v))[\Gamma(t,v)^+],
    \bigl(\E_p'(\Gamma(t,v))[\Gamma(t,v)^-]\bigr).
  \end{equation*}
  As $\Gamma(t,v)=v$ on $\partial A$ and as the vector
  $(\E'_p(v)[v^+], \E_p'(v)[v^-])$ has a direction
  inside $A$, we get by the Miranda's theorem that there exists $w\in
  A$ such that $\psi(w)=0$. This is a contradiction as it would imply
  that $\Gamma(t,w)\in \mathcal{M}_p$ with
  $\E_p(\Gamma(t,w))< c$.
\end{proof}
\end{details}

\section{A priori estimates}
\label{sec:a priori}

\subsection{Equivalence between norms}

In this section, we prove that the norm $\norm{\cdot}$
corresponding to the inner product~\eqref{eq:equiv-inner-prod}
and the traditional norm $\norm{\cdot}_H$ are equivalent.

\begin{Prop}
  The norms $\norm{\cdot}$ and $\norm{\cdot}_H$ are equivalent when
  $V\in L^{\infty}(\Omega)$ and $-\L_K + V$ is positive definite.
\end{Prop}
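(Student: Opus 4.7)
The strategy is to prove the two inequalities $\norm{u}_H \le C_1 \norm{u}$ and $\norm{u} \le C_2 \norm{u}_H$ separately. The upper bound $\norm{u}^2 \le (1 + C^2 \abs{V}_\infty) \norm{u}_H^2$ follows immediately from $V \in L^\infty(\Omega)$ combined with the continuous embedding $H \hookrightarrow L^2(\IR^N)$ (with constant $C$) recalled at the start of Section~\ref{sec:setting}, since $\int_\Omega V u^2 \le \abs{V}_\infty \abs{u}_2^2 \le C^2 \abs{V}_\infty \norm{u}_H^2$.

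For the converse inequality I would exploit positive definiteness through a compactness argument. Set $c := \inf\bigl\{\norm{u}^2 : u \in H,\ \norm{u}_H = 1\bigr\}$; the claim reduces to showing $c > 0$. I argue by contradiction: assume $c = 0$ and pick a minimizing sequence $(u_n)$ with $\norm{u_n}_H = 1$ and $\norm{u_n}^2 \to 0$. Reflexivity of the Hilbert space $H$ yields, up to a subsequence, a weak limit $u_* \in H$, and the compact embedding $H \hookrightarrow L^2(\IR^N)$ (valid because $2 < 2^*_s$) implies $u_n \to u_*$ strongly in $L^2$. Hence $\int_\Omega V u_n^2 \to \int_\Omega V u_*^2$, and combined with the weak lower semicontinuity of $\norm{\cdot}_H^2$ this gives $\norm{u_*}^2 \le \liminf_n \norm{u_n}^2 = 0$. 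Positive definiteness of $-\L_K + V$ then forces $u_* = 0$, so $\int_\Omega V u_n^2 \to 0$ and $\norm{u_n}^2 = 1 + o(1)$, contradicting $\norm{u_n}^2 \to 0$. Therefore $c > 0$, which yields $\norm{u}_H^2 \le c^{-1} \norm{u}^2$ for every $u \in H$.

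The delicate point---and the real content of the lemma---is that the potential term $\int_\Omega V u^2$ is a priori sign-indefinite, since no pointwise lower bound on~$V$ is assumed. A crude estimate such as $\int_\Omega V u^2 \ge -\abs{V}_\infty \abs{u}_2^2$ followed by the Sobolev embedding yields only $\norm{u}^2 \ge (1 - C^2 \abs{V}_\infty)\norm{u}_H^2$, whose right-hand side may well be negative, so it cannot give coercivity by itself. The compactness argument above is precisely what converts the qualitative positive definiteness of $-\L_K + V$ into the quantitative coercivity estimate required; beyond this, the proof is routine.
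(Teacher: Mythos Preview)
Your proof is correct and takes a genuinely different route from the paper's. For the lower bound, the paper argues directly and constructively: writing $\norm{u}^2 = \varepsilon\norm{u}_H^2 + (1-\varepsilon)\norm{u}^2 + \varepsilon\int_\Omega V u^2$ for $\varepsilon\in(0,1)$, it uses the spectral gap $\norm{u}^2 \ge \lambda_1 \abs{u}_2^2$ (with $\lambda_1>0$ the first eigenvalue of $-\L_K+V$) on the middle term and the crude bound $\int_\Omega V u^2 \ge -\abs{V}_\infty\abs{u}_2^2$ on the last, obtaining $\norm{u}^2 \ge \varepsilon\norm{u}_H^2 + \bigl(\lambda_1 - \varepsilon(\lambda_1+\abs{V}_\infty)\bigr)\abs{u}_2^2$, which works for $\varepsilon$ small. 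Your compactness-and-contradiction argument instead extracts the coercivity constant abstractly via weak lower semicontinuity and the compact embedding into~$L^2$. The paper's method has the advantage of giving an explicit constant in terms of $\lambda_1$ and $\abs{V}_\infty$; your method is non-constructive but arguably cleaner, since it does not need to invoke the eigenvalue $\lambda_1$ (whose existence and positivity themselves rest on the same compact embedding you use). Either way, both proofs ultimately hinge on the compactness of $H\hookrightarrow L^2$ to upgrade qualitative positive definiteness to quantitative coercivity.
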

\begin{proof}
  As $V\in L^{\infty}$ and $H$ embeds continuously in $L^2$, there exists
  a constant $C>0$ such that, for any $u\in H$, one has $\norm{u}^2
  \le (1 + C\abs{V}_{\infty} ) \norm{u}^2_H$.  Moreover, for any
  $\varepsilon\in (0,1)$ and $u\in H$, we have
  \begin{equation*}
    \begin{split}
      \norm{u}^2
      &= \varepsilon \norm{u}^2_H + (1-\varepsilon)\norm{u}^2
      + \varepsilon \int_{\Omega} V(x) u^2 \intd x\\
      &\ge  \varepsilon \norm{u}_{H}^2
      + \bigl(\lambda_1 -\varepsilon \lambda_1 - \varepsilon
      \abs{V}_{\infty}\bigr) \int_{\Omega} u^2.
    \end{split}
  \end{equation*}
  where $\lambda_1 > 0$ since the operator $-\L_K
  + V$ is positive definite. Taking $\varepsilon$ small, we conclude
  the proof.
\end{proof}

Thus, for this new norm $\norm{\cdot}$ on $H$, we can use
Poincar\'e's and Sobolev's inequalities. In the following, we assume
that we work with $H$ endowed with the norm $\norm{\cdot}$ and the
inner product~\eqref{eq:equiv-inner-prod}.

\subsection{Upper bound}

In this section, let us consider $(u_p)_{2<p<2^*_s}$ a family of
ground state solutions (resp.\ least energy nodal solutions) for the problem
 \begin{equation}
\label{eq:pbmPlambda}
     \begin{cases}
      - \L_K u(x) + V(x)u(x) = \lambda \abs{u(x)}^{p-2}u(x),
      &\text{for }
      x\in\Omega,\\
      u(x)=0, &\text{for } x\in \IR^N \setminus \Omega,
    \end{cases}
  \end{equation}
where $\lambda = \lambda_1$ (resp.\ $\lambda_2$),
then the $v_p:=\lambda^{1/(p-2)}u_p$ are solutions to
Problem~\eqref{eq:pbmP}.
Let us note $\tilde\E_p$ the functional associated
to~\eqref{eq:pbmPlambda}:
\begin{equation*}
  \tilde\E_p(u)= \frac{1}{2} \norm{u}^2_{H} +
  \frac{1}{2}\int_\Omega V(x)u^2 \intd x
  - \frac{\lambda}{p} \abs{u}_{p}^p
  = \frac{1}{2} \norm{u}^2 - \frac{\lambda}{p} \abs{u}_{p}^p,
\end{equation*}
and let $\tilde{\mathcal N}_p$ (resp. $\tilde{\mathcal M}_p$) be
its corresponding Nehari manifold (resp.\ nodal Nehari set).
As the symmetries of $u_p$ and $v_p$ are the same and $\E_p(v_p) =
\lambda^{2/(p-2)} \tilde\E_p(u_p)$, it suffices to study the
ground state and least energy nodal solutions
to~\eqref{eq:pbmPlambda}.

\begin{Lem}
  \label{dimE1=1}
  Assume $-\L_K + V$ is positive definite.  All eigenfunctions
  in $E_1 \setminus\{0\}$ are nonnegative or nonpositive.
  Thus $\dim E_1 = 1$.
  All eigenfunctions of $E_2 \setminus\{0\}$ change sign.
\end{Lem}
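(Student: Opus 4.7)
The plan is to combine the Rayleigh quotient characterization of $\lambda_1$, the strict positivity of $K$, and the orthogonality of eigenspaces. For the sign of elements of $E_1$, I invoke $\lambda_1 = \inf\{\norm{u}^2/\abs{u}_2^2 : u \in H\setminus\{0\}\}$, whose minimizers are exactly the nontrivial elements of $E_1$. The pointwise inequality $\bigl|\abs{u(x)} - \abs{u(y)}\bigr| \le \abs{u(x) - u(y)}$, valid with equality iff $u(x)u(y) \ge 0$, integrates against $K(x-y)$ on $Q$ to give $\norm{\abs{u}}_H \le \norm{u}_H$, while $\abs{u}_2$ and $\int_\Omega V u^2$ are invariant under $u \mapsto \abs{u}$. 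Hence if $u \in E_1\setminus\{0\}$, so is $\abs{u}$, forcing $\norm{\abs{u}}_H = \norm{u}_H$; together with $K > 0$, this yields $u(x)u(y) \ge 0$ for a.e.\ $(x,y) \in \IR^{2N}$, i.e.\ $u$ has constant sign.

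For the simplicity of $\lambda_1$, if $u, v \in E_1$ were linearly independent, then by the previous step (and a possible sign change) one may assume $u, v \ge 0$ are nontrivial, so $\int_\Omega u,\, \int_\Omega v > 0$; then $w := \bigl(\int_\Omega u\bigr) v - \bigl(\int_\Omega v\bigr) u$ lies in $E_1\setminus\{0\}$ yet satisfies $\int_\Omega w = 0$, contradicting the constant-sign property just established.

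For the sign-changing assertion on $E_2$, suppose by contradiction that some $u \in E_2\setminus\{0\}$ does not change sign, say $u \ge 0$. Testing the weak eigenvalue equation for $u$ against $\phi_1$ and that for $\phi_1$ against $u$, and using the symmetry of the bilinear form, yields $(\lambda_2 - \lambda_1)\int_\Omega u\phi_1 = 0$, whence $\int_\Omega u\phi_1 = 0$. Since $u, \phi_1 \ge 0$, this forces $u\phi_1 = 0$ a.e.\ in $\Omega$. Setting $A := \{\phi_1 = 0\}\cap\Omega$, I would conclude by a strong maximum principle showing $\abs{A} = 0$: if $\abs{A} > 0$, testing the weak equation against an admissible $\eta \in H$ with $\eta \ge 0$ supported in $A$ annihilates the right-hand side (because $\phi_1\eta\equiv 0$), hence $\langle \phi_1, \eta\rangle_H = 0$; a case split of the integrand $(\phi_1(x)-\phi_1(y))(\eta(x)-\eta(y))K(x-y)$ over the four regions $A\times A$, $A\times A^c$, $A^c\times A$, $A^c\times A^c$ shows each contribution is $\le 0$, so each must vanish, and positivity of $K$ then forces $\phi_1 \equiv 0$ on $\Omega\setminus A$, contradicting $\phi_1 \not\equiv 0$. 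From $\phi_1 > 0$ a.e.\ in $\Omega$ and $u\phi_1 = 0$ we then derive $u = 0$, a contradiction.

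The main technical obstacle is the construction of an admissible nonnegative test function $\eta \in H$ supported in the (possibly irregular) set $A$, since mere membership in $L^2$ does not guarantee membership in $H$ when $K$ is singular. This can be handled either by mollifying the characteristic function of a closed subset of positive measure inside $A$, or by invoking an already established strong maximum principle for nonlocal operators (e.g.\ \cite[Proposition~9(c)]{valdinoci} for $V \equiv 0$) and extending it to $V \in L^\infty$ by absorbing $V\phi_1$ into the right-hand side and applying a standard comparison argument.
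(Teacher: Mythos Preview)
Your treatment of the first two claims is correct and close in spirit to the paper's: the paper shows constant sign by splitting $u=u^++u^-$ and exploiting the strict inequality $\langle u^+,u^-\rangle_H>0$ in the Rayleigh quotient, whereas you compare $u$ with $|u|$; both arguments hinge on the same nonlocal cross-term coming from $K>0$. For simplicity of $\lambda_1$ the paper just says that a linear subspace contained in the union of a closed pointed cone and its negative has dimension at most~$1$; your explicit construction of $w$ is a fine way to make that standard fact concrete.

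For the $E_2$ claim, however, you take a harder road and leave a real gap. You correctly reach $u\phi_1=0$ a.e., but then you try to prove $\phi_1>0$ a.e.\ via a nonlocal strong maximum principle, and you yourself flag the obstacle: building a nontrivial $\eta\in H$ with $\eta\ge0$ and $\supp\eta\subset\{\phi_1=0\}$ is not straightforward when that set may be nowhere dense (mollification enlarges supports, so that suggestion does not obviously work), and importing a strong maximum principle for $-\L_K+V$ with general $V\in L^\infty$ is nontrivial extra machinery. The paper sidesteps all of this with a one-line contradiction: since $\phi_1,\phi_2$ belong to different eigenspaces they are orthogonal for $\langle\cdot,\cdot\rangle$, and because $\phi_1\phi_2=0$ a.e.\ also $\int_\Omega V\phi_1\phi_2=0$, hence $\langle\phi_1,\phi_2\rangle_H=0$; but the disjoint-support identity gives
\[
\langle\phi_1,\phi_2\rangle_H
=-2\int_{\IR^N\times\IR^N}\phi_1(x)\,\phi_2(y)\,K(x-y)\intd(x,y)<0,
\]
a contradiction. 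This uses only that $\phi_2\in H$ (so it is its own legitimate test function) and $K>0$, and requires no maximum principle at all. I recommend replacing your Part~3 argument with this computation.
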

\begin{proof}
  Since $-\L_K + V$ is positive definite, $\norm{\cdot}$ is a norm.
  Suppose on the contrary that there exists $u \in E_1\setminus\{0\}$
  with both $u^+ \ne 0$ and $u^- \ne 0$.  Then, one has
  \begin{align*}
    \frac{\norm{u}^2}{\abs{u}_2^2}
    = \frac{\norm{u^+ + u^-}^2}{
      \abs{u^+ + u^-}_2^2}
    &= \frac{\norm{u^+}^2 + 2 \langle u^+, u^- \rangle_H
      +  \norm{u^-}^2}{
      \abs{u^+}^2_2 + \abs{u^-}^2_2}\\[1\jot]
    &> \frac{\norm{u^+}^2 + \norm{u^-}^2}{\abs{u^+}^2_2 + \abs{u^-}^2_2}
    \ge \min\Bigl\{ \frac{\norm{u^+}^2}{ \abs{u^+}^2_2},\
    \frac{\norm{u^-}^2}{ \abs{u^-}^2_2} \Bigr\}  .
  \end{align*}
  which contradicts the variational characterization of~$\lambda_1$.
  Because the cone $K := \{ u : u \ge 0\}$ is closed and pointed,
  it is standard to show that the fact that $E_1$ is made only
  of elements of $K$ or $-K$ implies
  $\dim E_1 \le 1$.

  Finally, let $\phi_2 \in E_2\setminus\{0\}$ and
  suppose on the contrary that $\phi_2 \ge 0$ (the case $\phi_2 \le 0$
  is similar).
  Because $\phi_2 \perp E_1$ in $L^2(\Omega)$, one concludes that
  $\phi_2 = 0$ a.e.\ on $\{\phi_1 > 0\}$.
  Thus, $0 = \langle \phi_1, \phi_2 \rangle
  = \langle \phi_1, \phi_2 \rangle_H
  = -2 \int_{\IR^N \times\IR^N} \phi_1(x) \phi_2(y) K(x-y) \intd(x,y)
  < 0$,
  a contradiction.
\end{proof}

\begin{Prop}
  \label{bound}
  The family $(u_p)_{2<p< \bar p}$ is bounded in $H$ for the norm
  $\norm{\cdot}$, for any $\bar p < 2^*_s$.
\end{Prop}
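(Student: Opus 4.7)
The plan is to bound $\tilde\E_p(u_p)$ from above via a well-chosen test function and then recover a bound on $\norm{u_p}^2$ from the Nehari identity. Since $u_p \in \tilde{\mathcal{N}}_p$ (resp.\ $\tilde{\mathcal{M}}_p$), the constraint $\norm{u_p}^2 = \lambda\abs{u_p}_p^p$ gives
\begin{equation*}
  \tilde\E_p(u_p) = \Bigl(\frac{1}{2}-\frac{1}{p}\Bigr)\norm{u_p}^2,
\end{equation*}
so it will suffice to prove that $\tilde\E_p(u_p) \le C(p-2)$ uniformly for $p\in(2,\bar p)$.

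In the ground state case ($\lambda=\lambda_1$), I would fix $\phi_1\in E_1\setminus\{0\}$ with $\abs{\phi_1}_2=1$, so that $\norm{\phi_1}^2=\lambda_1$, and for each $p$ pick the unique $t_p>0$ with $t_p\phi_1\in\tilde{\mathcal{N}}_p$, namely $t_p^{p-2}=1/\abs{\phi_1}_p^p$. Minimality of $u_p$ on the Nehari manifold yields
\begin{equation*}
  \tilde\E_p(u_p)\le\tilde\E_p(t_p\phi_1)
  = \Bigl(\frac{1}{2}-\frac{1}{p}\Bigr)\, t_p^2\,\lambda_1,
\end{equation*}
so $\norm{u_p}^2\le t_p^2\lambda_1$, and the problem reduces to bounding $t_p$ on $(2,\bar p)$. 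On a subinterval $[2+\delta,\bar p]$ this is immediate from continuity of $p\mapsto \abs{\phi_1}_p$; the delicate regime is $p\to 2^+$, where $t_p^{p-2}\to 1$ is a $1^\infty$ indeterminate form. I would handle it via the first-order expansion
\begin{equation*}
  \abs{\phi_1}_p^p = 1 + (p-2)\int_\Omega \phi_1^2 \log\abs{\phi_1}\intd x + o(p-2),
\end{equation*}
valid because $\phi_1\in L^\infty(\Omega)$ by standard regularity for eigenfunctions with bounded $V$ on a bounded domain, which makes $\phi_1^2\log\abs{\phi_1}$ integrable. Hence $t_p^2$ admits a finite positive limit as $p\to 2^+$ and is uniformly bounded on all of $(2,\bar p)$.

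For the least energy nodal case ($\lambda=\lambda_2$), I would test with $t_p^+\phi_2^+ + t_p^-\phi_2^-$, where $\phi_2\in E_2\setminus\{0\}$ changes sign thanks to Lemma~\ref{dimE1=1}, and $(t_p^+,t_p^-)$ is the projection of $\phi_2^+ + \phi_2^-$ onto $\tilde{\mathcal{M}}_p$ whose existence and uniqueness were established in the discussion following the definition of $\mathcal{M}_p$. The system $\tilde\E_p'(t_p^+\phi_2^+ + t_p^-\phi_2^-)[\phi_2^{\pm}]=0$ couples $t_p^+$ and $t_p^-$ through the positive cross term $\langle\phi_2^+,\phi_2^-\rangle_H$, but at $p=2$ testing the eigenvalue equation $(-\L_K+V)\phi_2=\lambda_2\phi_2$ against $\phi_2^{\pm}$ shows that $(t^+,t^-)=(1,1)$ is a solution. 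A continuity (or implicit function) argument then gives boundedness of $(t_p^+,t_p^-)$ as $p\to 2^+$, and the energy comparison $\tilde\E_p(u_p)\le \tilde\E_p(t_p^+\phi_2^+ + t_p^-\phi_2^-)$ closes the argument as before.

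The main obstacle, in both cases, is controlling the Nehari projection scalars as $p\to 2^+$, where their defining relations become asymptotically degenerate; this is what forces the first-order expansion above and, in the nodal case, the analysis of the coupled $2\times 2$ system at the limit.
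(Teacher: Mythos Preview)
Your treatment of the ground state case is essentially the paper's argument: project $\phi_1$ onto the Nehari manifold, control $t_p$ as $p\to 2$ via the derivative $\partial_p\abs{\phi_1}_p^p=\int_\Omega\ln\abs{\phi_1}\,\abs{\phi_1}^p$, and compare energies. Only cosmetic differences (normalization, L'Hospital versus Taylor expansion).

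The nodal case has a genuine gap. You propose to control $(t_p^+,t_p^-)$ by ``a continuity (or implicit function) argument'' anchored at the solution $(1,1)$ for $p=2$. But at $p=2$ the system $\tilde\E_p'(t^+\phi_2^+ + t^-\phi_2^-)[\phi_2^\pm]=0$ degenerates: using the eigenfunction identities $\norm{\phi_2^\pm}^2=-\langle\phi_2^+,\phi_2^-\rangle+\lambda_2\abs{\phi_2^\pm}_2^2$, both equations collapse to $(t^--t^+)\langle\phi_2^+,\phi_2^-\rangle=0$, so \emph{every} pair $(t,t)$ with $t>0$ solves the limiting system. Correspondingly, the Jacobian in $(t^+,t^-)$ at $(2,1,1)$ is
\[
\begin{pmatrix}-a & a\\ a & -a\end{pmatrix},\qquad a=\langle\phi_2^+,\phi_2^-\rangle>0,
\]
which is singular; the implicit function theorem does not apply, and an unspecified ``continuity argument'' faces exactly this degeneracy. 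This is precisely the ``main obstacle'' you name but do not resolve.

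The paper sidesteps the degeneracy by an explicit trick. Substituting the eigenfunction identities into the two Nehari equations yields
\[
t_p^+\bigl(\abs{\phi_2^+}_2^2-(t_p^+)^{p-2}\abs{\phi_2^+}_p^p\bigr)
= -\,t_p^-\bigl(\abs{\phi_2^-}_2^2-(t_p^-)^{p-2}\abs{\phi_2^-}_p^p\bigr),
\]
so the two brackets always have opposite signs. On the sub-family where, say, the first bracket is $\ge 0$, one reads off directly $t_p^+\le\bigl(\abs{\phi_2^+}_2^2/\abs{\phi_2^+}_p^p\bigr)^{1/(p-2)}$, which is bounded as $p\to 2$ by exactly the same logarithmic computation as in the ground state case. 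Then the \emph{linear} dependence on $t_p^-$ in the first Nehari equation bounds $t_p^-$ in terms of $t_p^+$. The other sub-family is symmetric. This argument is elementary and never invokes any limiting system, which is why it is unaffected by the degeneracy.
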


\begin{proof}
  Let us start with ground state solutions.
  Consider $\phi_1\in E_1$ such that $\norm{\phi_1}=1$.  If
  \begin{equation*}
    t_p := \bigg( \frac{1}{\lambda_1
      \abs{\phi_1}_{p}^p}\biggr)^{1/(p-2)} >0,
  \end{equation*}
  then $t_p\phi_1\in \mathcal{N}_p$ i.e., $t_p^2 \norm{\phi_1}^2 =
  t_p^p \lambda_1 \abs{\phi_1}_{p}^p$.  We shall prove that
  $p\mapsto t_p : (2, \bar p] \to \IR$ is bounded.
  By continuity, it is enough to
  check that $t_p$ converges to some $t_*<+\infty$ as $p\to 2$.  We
  have
  \begin{equation*}
    \lim_{p\to 2} \ln  t_p
    = -\lim_{p\to 2} \frac{\ln (\lambda_1 \abs{\phi_1}_{p}^p) }{p-2}.
  \end{equation*}
  Since $\lambda_1\abs{\phi_1}_{2}^2 =1$, we can use
  L'Hospital's rule.  Remark that $\partial_p \int_{\Omega}
  \abs{\phi_1}^p = \int_\Omega \ln \abs{\phi_1} \, \abs{\phi_1}^p$
  by Lebesgue's dominated convergence theorem.  Then, for $p\to 2$,
  \begin{equation*}
    \lim_{p\to 2} t_p
    = \exp\biggl(-\frac{\int_\Omega \ln\abs{\phi_1} \,
      \abs{\phi_1}^2}{\int_{\Omega} \abs{\phi_1}^2} \biggr)
    < +\infty.
  \end{equation*}
  Since $u_p \in \tilde{\mathcal N}_p$ has the lowest energy,
  $\bigl(\frac{1}{2} - \frac{1}{p} \bigr)
  \norm{u_p}^2 = \tilde\E_p(u_p) \le \tilde\E_p (t_p\phi_1) =
  \bigl(\frac{1}{2} - \frac{1}{p}\bigr) t_p^2$ concluding this case.

  Let us now treat the case of least energy nodal solutions.  Pick
  $\phi_2\in E_2\setminus\{0\}$ and let $t^+_p > 0$ and $t^-_p > 0$ be
  such that $t_p^+ \phi_2^+ + t_p^- \phi_2^- \in \tilde{\mathcal M}_p$
  (they exist because $\phi_2$ changes sign, see
  page~\pageref{proj-nodal-Nehari}).  Expanding the equations
  $\tilde\E_p'(t_p^+ \phi_2^+ + t_p^- \phi_2^-)[\phi_2^\pm] = 0$
  yields
  \begin{align}
    \label{eq:t+}
    &t^+_p \norm{\phi_2^+}^2
    + t^-_p \langle \phi_2^+, \phi_2^-\rangle
    - \lambda_2 (t^+_p)^{p-1} \abs{\phi_2^+}_p^p = 0\\
    \label{eq:t-}
    &t^-_p \norm{\phi_2^-}^2
    + t^+_p \langle \phi_2^+, \phi_2^-\rangle
    - \lambda_2 (t^-_p)^{p-1} \abs{\phi_2^-}_p^p = 0
  \end{align}
  The fact that $\phi_2$ is a second eigenfunction reads
  $\langle \phi_2, w \rangle = \lambda_2 \int_\Omega \phi_2 w$
  for all $w \in H$.  In particular, taking $w$ as $\phi_2^+$ and
  $\phi_2^-$ yields
  \begin{equation}
    \label{eq:eigenfunctions+-}
    \norm{\phi_2^+}^2 = - \langle \phi_2^+, \phi_2^-\rangle
    + \lambda_2 \abs{\phi_2^+}_2^2
    \quad\text{and}\quad
    \norm{\phi_2^-}^2 = - \langle \phi_2^+, \phi_2^-\rangle
    + \lambda_2 \abs{\phi_2^-}_2^2 .
  \end{equation}
  Substituting back in \eqref{eq:t+}--\eqref{eq:t-}, one deduces that
  \begin{equation*}
    t^+_p \bigl(
    \abs{\phi_2^+}_2^2 - (t_p^+)^{p-2} \abs{\phi_2^+}_p^p \bigr)
    = - t^-_p \bigl(
    \abs{\phi_2^-}_2^2 - (t_p^-)^{p-2} \abs{\phi_2^-}_p^p \bigr).
  \end{equation*}
  Thus $\abs{\phi_2^+}_2^2 - (t_p^+)^{p-2} \abs{\phi_2^+}_p^p$
  and $\abs{\phi_2^-}_2^2 - (t_p^-)^{p-2} \abs{\phi_2^-}_p^p$ always
  have opposite signs.
  Let us show that $t^+_p$ and $t^-_p$ are bounded as $p \to 2$.
  Let us split these families into (possibly) two sub-families
  according to the sign of
  $\abs{\phi_2^+}_2^2 - (t_p^+)^{p-2} \abs{\phi_2^+}_p^p$.
  We deal with the subfamily for which
  $\abs{\phi_2^+}_2^2 - (t_p^+)^{p-2} \abs{\phi_2^+}_p^p \ge 0$
  (for the other one, this expression is $< 0$, so
  $\abs{\phi_2^-}_2^2 - (t_p^-)^{p-2} \abs{\phi_2^-}_p^p > 0$ and
  the argument is similar).  This inequality can be rewritten as
  \begin{equation}
    \label{eq:bound:t+}
    t^+_p \le \left( \frac{\abs{\phi_2^+}_2^2}{
        \abs{\phi_2^+}_p^p} \right)^{1/(p-2)}
    \xrightarrow[p\to 2]{}
    \exp\biggl( -
      \frac{\int_\Omega \ln\abs{\phi_2^+} \, \abs{\phi_2^+}^2}{
        \abs{\phi_2^+}_2^2 } \biggr)
  \end{equation}
  (with $s \ln\abs{s}$ understood as $0$ when $s=0$)
  where the convergence results from arguments similar to those used
  in the ground state case.  Thus $t^+_p$ is bounded for $p$ close to
  $2$ and equation~\eqref{eq:t+} implies that the same holds for
  $t^-_p$.

  The conclusion follows
  easily since
  \begin{equation*}
    \Bigl(\frac{1}{2}-\frac{1}{p}\Bigr) \norm{u_p}^2
    =  \tilde\E_{p}(u_{p})
    \le \tilde\E_{p}(\hat{v}_{p})
    = \Bigl(\frac{1}{2}-\frac{1}{p}\Bigr)\norm{\hat{v}_p}^2
  \end{equation*}
  and $\norm{\hat{v}_p}$ is bounded for $p$ close to~$2$ because
  $t^+_p$ and $t^-_p$ are and $v_p \to u_*$.
\end{proof}

We may thus assume that $u_p$ weakly converges, up to a subsequence, to
some $u_*\in H$ as $p\to 2$.

\pagebreak[3]
\begin{Prop}
  \label{weak}
  If $(u_p)$ converges weakly in $H$ to
  $u_*$ as $p\to 2$ then $u_* \in E_1$ (resp.\ $E_2$).
\end{Prop}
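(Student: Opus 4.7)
The plan is to pass to the limit as $p\to 2$ in the weak formulation of~\eqref{eq:pbmPlambda} satisfied by $u_p$, namely
\begin{equation*}
  \forall\phi\in H,\qquad
  \langle u_p,\phi\rangle
  = \lambda\int_\Omega \abs{u_p}^{p-2} u_p \, \phi\intd x,
\end{equation*}
with $\lambda=\lambda_1$ (resp.\ $\lambda_2$). If both sides converge to the expected limits, the identity $\langle u_*,\phi\rangle = \lambda\int_\Omega u_*\phi\intd x$ left over in the limit is precisely the weak form of the eigenvalue equation $-\L_K u_* + Vu_* = \lambda u_*$ (with $u_*=0$ on $\complement\Omega$), which forces $u_*\in E_1$ (resp.\ $E_2$), possibly zero. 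The linear side is immediate: the assumed weak convergence $u_p\rightharpoonup u_*$ in $(H,\langle\cdot,\cdot\rangle)$ gives $\langle u_p,\phi\rangle\to\langle u_*,\phi\rangle$, and the compact embedding $H\hookrightarrow L^q(\Omega)$ for $q\in[1,2^*_s)$ from Section~\ref{sec:setting} ensures $u_p\to u_*$ strongly in $L^q$ and (up to a subsequence) a.e.\ in $\Omega$. In particular, $\abs{u_p}^{p-2}u_p\to u_*$ almost everywhere.

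The main technical step is the convergence of the nonlinear term
\begin{equation*}
  \int_\Omega \abs{u_p}^{p-2}u_p\,\phi\intd x
  \longrightarrow \int_\Omega u_*\phi\intd x
  \quad\text{as $p\to 2$,}
\end{equation*}
which I plan to handle via Vitali's convergence theorem. Pick any $\alpha\in\bigl[2^*_s/(2^*_s-1),\,2^*_s\bigr)$; this interval is non-empty because $2^*_s>2$. Choose $\delta>0$ small enough that $\alpha+\delta<2^*_s$, so that $(\alpha+\delta)(p-1)<2^*_s$ for $p$ close to $2$. Proposition~\ref{bound} combined with Sobolev's inequality then furnishes a uniform bound on $\abs{u_p}^{p-1}$ in $L^{\alpha+\delta}(\Omega)$, or equivalently on $\abs{u_p}^{\alpha(p-1)}$ in $L^{(\alpha+\delta)/\alpha}(\Omega)$, yielding the equi-integrability required by Vitali. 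Consequently $\abs{u_p}^{p-2}u_p\to u_*$ strongly in $L^\alpha(\Omega)$. Since $\alpha'\le 2^*_s$, every $\phi\in H$ lies in $L^{\alpha'}(\Omega)$, and Hölder's inequality closes the convergence of the nonlinear term. This equi-integrability step is where I expect essentially all the work: once the exponents are tuned correctly, the rest is routine.

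Combining both sides, the limit of the weak formulation reads $\langle u_*,\phi\rangle = \lambda\int_\Omega u_*\phi\intd x$ for every $\phi\in H$, i.e., $u_*$ is a (possibly null) eigenfunction of $-\L_K+V$ associated with the eigenvalue $\lambda=\lambda_1$ (resp.\ $\lambda_2$). Hence $u_*\in E_1$ (resp.\ $E_2$), as required.
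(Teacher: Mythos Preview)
Your proof is correct and follows the same strategy as the paper: pass to the limit in the weak formulation, using weak convergence for the linear part and the compact embedding $H\hookrightarrow L^q$ to handle the nonlinear term. The only difference is cosmetic---the paper justifies the convergence of $\int_\Omega |u_p|^{p-2}u_p\,\phi$ via Lebesgue's dominated convergence theorem (extracting an $L^q$ dominating function from the strong $L^q$ convergence), whereas you use Vitali's theorem with equi-integrability coming from the uniform $H$-bound and Sobolev; both are standard and equivalent here.
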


\begin{proof}
  For every $v\in H$, one has
  \begin{equation*}
    0 =
    \tilde\E'_p (u_p)[v] = \langle u_p, v  \rangle - \lambda
    \int_{\Omega} \abs{u_p}^{p-2}u_p v
  \end{equation*}
  where $\lambda = \lambda_1$ (resp.\ $\lambda = \lambda_2$) for
  ground state solutions (resp.\ least energy nodal solutions).  Since
  $u_p$ converges weakly to $u_*$, the first term converges to $\langle
  u_*,v \rangle$. Moreover, $u_p \to u_*$ in $L^q(\Omega)$ for $1\le
  q< 2^*_s$.  So, up to a subsequence, $u_p\to u_*$ a.e.\ and
  there is $f\in L^{2}(\Omega)$
  such that $\abs{u_p}\le f$ almost everywhere.  By Lebesgue's
  dominated convergence theorem, the
  second term converges to $\int_{\Omega} u_* v$ as $\bigabs{
    \abs{u_p}^{p-2} u_p v} \le \abs{ \max\{f,1\}}^{\bar p - 1} \, \abs{v} \in
  L^1(\Omega)$
  when $p \le \bar p < 2^*_s$.
  As the limit does not depend on the subsequence, the whole sequence
  converges.
  Thus, $u_*$ is a weak solution to $-\L_K u +
  V(x) u = \lambda u$.
\end{proof}

\subsection{Lower bound}

\begin{Prop}
  \label{away}
  If $(u_p)$ converges weakly to $u_*$ in $H$ as $p\to 2$ then $u_*
  \ne 0$.
\end{Prop}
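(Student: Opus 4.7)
I argue by contradiction, supposing $u_{*}=0$. My plan is to show (i) that $\|u_{p}\|\to 0$ follows from the assumption, and (ii) that $\|u_{p}\|^{2}$ is nevertheless bounded below by a strictly positive constant as $p\to 2$, which yields the desired contradiction.

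For (i), the weak convergence $u_{p}\rightharpoonup 0$ in $H$, combined with the compactness of the embedding $H\hookrightarrow L^{q}(\Omega)$ for $q<2^{*}_{s}$, gives $u_{p}\to 0$ strongly in $L^{q}$ for every such $q$. Splitting
\[
\int_{\Omega}|u_{p}|^{p}
\le \int_{\{|u_{p}|\le 1\}}|u_{p}|^{2} + \int_{\{|u_{p}|>1\}}|u_{p}|^{q_{0}}
\le |u_{p}|_{2}^{2} + |u_{p}|_{q_{0}}^{q_{0}}
\]
for a fixed $q_{0}\in(2,2^{*}_{s})$ (and $p$ close enough to $2$ so that $p<q_{0}$) yields $|u_{p}|_{p}^{p}\to 0$. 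The Nehari identity $\|u_{p}\|^{2}=\lambda\,|u_{p}|_{p}^{p}$ obtained by testing $\tilde\E_{p}'(u_{p})=0$ against $u_{p}$ (with $\lambda=\lambda_{1}$ for ground states and $\lambda=\lambda_{2}$ for least energy nodal solutions) then forces $\|u_{p}\|\to 0$.

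For (ii), I exploit the fact that on the (nodal) Nehari set one has $\tilde\E_{p}(u)=(1/2-1/p)\|u\|^{2}$, so $u_{p}$ is in fact a minimizer of $\|u\|^{2}$ there. In the ground state case, parametrizing $u=re$ with $\|e\|=1$, the Nehari constraint reads $r^{p-2}=1/(\lambda_{1}|e|_{p}^{p})$, whence
\[
\|u_{p}\|^{2}=\min_{\tilde{\mathcal N}_{p}}\|u\|^{2}=\Bigl(\frac{1}{\lambda_{1}\,C_{p}^{p}}\Bigr)^{\!2/(p-2)},\qquad C_{p}^{p}:=\sup_{\|e\|=1}|e|_{p}^{p}.
\]
By the variational characterization of $\lambda_{1}$ and its simplicity (Lemma~\ref{dimE1=1}), $\lambda_{1}C_{2}^{2}=1$ with unique maximizer $\phi_{1}/\|\phi_{1}\|$. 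A Taylor expansion in $p$, in the spirit of the L'Hospital computation of Proposition~\ref{bound}, gives $\lambda_{1}C_{p}^{p}=1+a(p-2)+o(p-2)$ for a finite constant $a$, and therefore $\|u_{p}\|^{2}\to e^{-2a}>0$, contradicting (i).

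In the nodal case, the same strategy applies with $\tilde{\mathcal M}_{p}$ in place of $\tilde{\mathcal N}_{p}$: parametrizing $u=t^{+}e^{+}+t^{-}e^{-}$ with unit, disjointly supported $e^{\pm}$, the two nodal Nehari equations uniquely determine $(t^{+},t^{-})$ (as in the proof of Proposition~\ref{bound}), and an analogous Taylor expansion built from $(\phi_{2}^{+},\phi_{2}^{-})$ at $p=2$ delivers a positive asymptotic lower bound for $\|u_{p}\|^{2}$. The main technical obstacle is precisely this Taylor expansion of the relevant (nodal) Sobolev-type constant, which requires differentiating through a supremum; this in turn rests on the identification of the optimizer at $p=2$, ultimately using the simplicity of $\lambda_{1}$ from Lemma~\ref{dimE1=1}.
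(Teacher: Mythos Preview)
Your step (i) is fine and in fact already delivers the whole contradiction once combined with a uniform positive lower bound on $|u_p|_p^p$; the paper does not argue by contradiction but proves this lower bound directly.

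For ground states, your step (ii) is heading in the right direction but is more elaborate than needed, and the key inequality you need is the \emph{upper} bound $\lambda_1 C_p^p \le 1 + M(p-2)$, which you have not justified (the lower bound, obtained by plugging in $\phi_1$, only yields an upper bound on $\|u_p\|^2$). Differentiating through the supremum can be made rigorous using the mean value theorem on $t\mapsto t^p - t^2$ and Sobolev bounds on $\int |e_p|^2 \ln|e_p|$, but this is work you have not done. The paper bypasses all of this with a one-line interpolation: by H\"older with $\omega = \frac{2^*_s}{2^*_s-2}\frac{p-2}{p}$,
\[
|u_p|_p^2 \le |u_p|_2^{2(1-\omega)} |u_p|_{2^*_s}^{2\omega}
\le (\lambda_1^{-1}\|u_p\|^2)^{1-\omega} (S^{-1}\|u_p\|^2)^{\omega}
= |u_p|_p^p\, (S^{-1}\lambda_1)^{\omega},
\]
giving $|u_p|_p^p \ge (S\lambda_1^{-1})^{2^*_s/(2^*_s-2)}$ uniformly in $p$. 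No Taylor expansion, no supremum.

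For least energy nodal solutions, your sketch has a genuine gap. You propose to expand the ``nodal Sobolev constant'' around $p=2$, but you have not identified the limiting optimization problem, let alone its optimizer; since $\lambda_2$ need not be simple, the situation is much less clear than in the ground state case, and your closing remark invoking ``simplicity of $\lambda_1$'' does not help here. The paper avoids this entirely: it constructs, via an intermediate value argument along the segment $\alpha\mapsto (1-\alpha)u_p^+ + \alpha u_p^-$ projected on $\tilde{\mathcal N}_p$, an auxiliary $v_p = t_p^+ u_p^+ + t_p^- u_p^- \in \tilde{\mathcal N}_p \cap E_1^\perp$ with $\|v_p\|\le\|u_p\|$, and then reruns the interpolation argument above with $\lambda_2$ in place of $\lambda_1$ (using $|v_p|_2^2\le \lambda_2^{-1}\|v_p\|^2$ since $v_p\perp E_1$). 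This yields the uniform bound $|u_p|_p^p \ge (S\lambda_2^{-1})^{2^*_s/(2^*_s-2)}$ without any asymptotic analysis.
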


\begin{proof}
  We first treat the case when $u_p$ is a ground state solution.  By
  H\"older's inequality, we have $\abs{u_p}_{p}^2 \le
  \abs{u_p}_{2}^{2(1-\omega)} \abs{u_p}_{2^*_s}^{2\omega}$ with
  $\omega = \frac{2^*_s}{2^*_s-2}\frac{p-2}{p}$.  Then, by using
  Poincar\'e and Sobolev inequalities and since $u_p$ belongs to the
  Nehari manifold, we have
  \begin{equation*}
    \abs{u_p}_{p}^2  \le \bigl( \lambda_1^{-1} \norm{u_p}^2
    \bigr)^{1-\omega} \bigl( S^{-1} \norm{u_p}^2\bigr)^{\omega}
    = \bigl(\abs{u_p}_{p}^p\bigr)^{1-\omega} \,\abs{u_p}_{p}^{p\omega}
    (S^{-1}\lambda_1)^{\omega}
    = \abs{u_p}_{p}^p  \, (S^{-1}\lambda_1)^{\omega}.
  \end{equation*}
  Thus, $\abs{u_p}_{p}^p\ge (S
  \lambda_{1}^{-1})^{2^*_s/(2^*_s-2)}$.  Using the compact embeddings
  and Lebesgue's dominated convergence theorem, one has
  $\abs{u_*}_{2}^2 = \lim_{p\to 2}\abs{u_p}_{p}^p >0 $.

  In the case of least energy nodal
  solutions, we claim that there exists $v_p = t_{p}^+ u_{p}^+ +
  t_{p}^- u_p^- \in \mathcal{N}_p \cap E_1^{\perp}$ such that
  $\norm{v_p}\le \norm{u_p}$. Then, by the same argument as for the
  ground state case (with $\lambda_2$ instead of $\lambda_1$ because
  $v_p \perp E_1$), we get that $v_p$ stays away from zero which is
  enough to conclude.  To prove the claim, consider the line segment
  \begin{equation*}
    T:[0,1] \to H\setminus\{0\}:
    \alpha \mapsto (1-\alpha)u_p^{+}+\alpha u_p^{-}.
  \end{equation*}
  For all
  $\alpha\in [0,1]$, there exists a unique $t_{\alpha}>0$ such
  that $t_{\alpha}T(\alpha) \in {\mathcal{N}}_{p}$.
  This $t_{\alpha}$ can be written explicitly and is easily seen to
  be continuous w.r.t.\ $\alpha$.
  For $\alpha = 0$, we have
  $\int_{\Omega}t_{\alpha}u^{+}_p \phi_{1} > 0$ and, for $\alpha = 1$, we
  have $\int_{\Omega} t_{\alpha}u^{-}_p \phi_{1} < 0$.  So, by continuity,
  there is a $\alpha^{*} \in (0,1)$ such that
  $\int_{\Omega}t_{\alpha^{*}} T(\alpha^{*}) \phi_{1} = 0$ and
  $t_{\alpha^{*}} T(\alpha^{*}) \in {\mathcal{N}}_{p}$.  We just set $t_p^{+} :=
  t_{\alpha^{*}} (1-\alpha^{*})$ and $t_p^{-} := t_{\alpha^{*}}
  \alpha^{*}$ to conclude.  By definition of
  $\mathcal{M}_p \subseteq \mathcal{N}_p$, we
  get that $\tilde\E_p(v_p) \le \tilde\E_p(u_p)$ and
  so, $\norm{v_p}\le \norm{u_p}$.
\end{proof}

\begin{Prop}
  \label{convH}
  Ground state solutions (resp.\ least energy nodal solutions)
  to~\eqref{eq:pbmPlambda}
  converge, up to a subsequence, in $H$ to
  some $\phi_1^*\in E_1 \setminus\{0\}$ (resp.\
  $\phi_2^* \in E_2 \setminus\{0\}$).
\end{Prop}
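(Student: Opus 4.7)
\smallskip

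\noindent\textbf{Proof proposal for Proposition~\ref{convH}.}
The plan is to combine the three preceding propositions (boundedness, weak convergence to an eigenfunction, and non-triviality of the limit) with a norm-convergence argument: in a Hilbert space, weak convergence together with convergence of norms upgrades to strong convergence. So everything reduces to showing that $\norm{u_p} \to \norm{u_*}$ along the subsequence provided by Propositions~\ref{bound}--\ref{away}.

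First, I would record the two norm identities that drive the argument. Since $u_p$ is a critical point of $\tilde\E_p$ (in both the ground state and least energy nodal cases, because $u_p \in \tilde{\mathcal N}_p$ or $u_p \in \tilde{\mathcal M}_p$ forces $\tilde\E'_p(u_p)[u_p] = 0$), testing with $u_p$ gives
\begin{equation*}
  \norm{u_p}^2 = \lambda \int_\Omega \abs{u_p}^p,
\end{equation*}
with $\lambda = \lambda_1$ or $\lambda_2$. On the other hand, Proposition~\ref{weak} gives that $u_* \in E_i$ satisfies $\langle u_*, v\rangle = \lambda \int_\Omega u_* v$ for every $v \in H$; testing with $v = u_*$ yields
\begin{equation*}
  \norm{u_*}^2 = \lambda \int_\Omega u_*^2.
\end{equation*}

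Second, I would pass to the limit in $\int_\Omega \abs{u_p}^p$. Fix $\bar p \in (2, 2^*_s)$; by the compact embedding $H \hookrightarrow L^{\bar p}(\Omega)$ and the boundedness of $(u_p)$, a subsequence converges strongly in $L^{\bar p}$ and pointwise a.e., and is dominated by some $g \in L^{\bar p}(\Omega)$. For $p \in (2, \bar p]$ close to $2$, one has $\abs{u_p}^p \to u_*^2$ a.e.\ and $\abs{u_p}^p \le \max\{1, g^{\bar p}\} \in L^1(\Omega)$, so Lebesgue's dominated convergence theorem gives
\begin{equation*}
  \int_\Omega \abs{u_p}^p \longrightarrow \int_\Omega u_*^2.
\end{equation*}
Combining the three displays above, $\norm{u_p}^2 \to \norm{u_*}^2$.

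Finally, since $H$ is a Hilbert space and $u_p \rightharpoonup u_*$ with $\norm{u_p} \to \norm{u_*}$, a standard computation ($\norm{u_p - u_*}^2 = \norm{u_p}^2 - 2\langle u_p, u_*\rangle + \norm{u_*}^2 \to 0$) yields $u_p \to u_*$ strongly in $H$. Non-triviality of $u_*$ is exactly Proposition~\ref{away}, and membership in $E_i$ is Proposition~\ref{weak}, so $u_* = \phi_i^* \in E_i \setminus \{0\}$. The main (mild) obstacle is the dominated convergence step, which must juggle the exponent $p$ varying with the sequence; this is handled by fixing an a priori upper bound $\bar p < 2^*_s$ so that the compact embedding supplies a uniform $L^1$-dominant.
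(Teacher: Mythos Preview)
Your argument is correct. Both the paper and you first invoke Propositions~\ref{bound}, \ref{weak} and~\ref{away} to obtain a weakly convergent subsequence with nontrivial limit $u_* \in E_i$; the difference lies only in how the weak convergence is upgraded to strong. The paper tests the two Euler--Lagrange equations on the difference,
\[
  0 = \tilde\E'_{p_n}(u_{p_n})[u_{p_n}-u_*] - \tilde\E'_2(u_*)[u_{p_n}-u_*]
  = \norm{u_{p_n}-u_*}^2 - \lambda\!\int_\Omega\!\bigl(\abs{u_{p_n}}^{p_n-2}u_{p_n}-u_*\bigr)(u_{p_n}-u_*),
\]
and lets the compact embedding kill the integral directly. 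You instead use the Nehari identity $\norm{u_{p_n}}^2 = \lambda\abs{u_{p_n}}_{p_n}^{p_n}$ and the eigenfunction identity $\norm{u_*}^2 = \lambda\abs{u_*}_2^2$, show $\abs{u_{p_n}}_{p_n}^{p_n} \to \abs{u_*}_2^2$ by dominated convergence, and conclude via the Hilbert-space fact that weak convergence plus convergence of norms gives strong convergence. Both routes are standard and of comparable length; yours is perhaps slightly more transparent conceptually, while the paper's avoids the separate norm computation. Your handling of the varying exponent (fixing $\bar p < 2^*_s$ and dominating by $\max\{1,g^{\bar p}\}$) is exactly the right way to make the dominated convergence rigorous, and mirrors what was already done in the proof of Proposition~\ref{away}.
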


\begin{proof}
 Let $(u_p)$ be a family of ground state solutions
  of~\eqref{eq:pbmPlambda}.  The argument is identical for least energy
  nodal solutions.  By Proposition~\ref{bound}, for any sequence
  $p_n \to 2$, there exists a subsequence, still denoted
  $p_n$, such that $u_{p_n}$ converges weakly in $H$ to some $u^*
  \in H$.
  Proposition~\ref{weak} and~\ref{away} imply that $u_* \in E_1
  \setminus\{0\}$.
  Finally, the compact embedding of $H$ into $L^q$ for $1\le q < 2^*_s$ and
  \begin{align*}
    0 &= \tilde\E_{p_n}'(u_{p_n}) [u_{p_n} - u^*]
    - \tilde\E_2'(u^*) [u_{p_n} - u^*]\\
    &= \norm{u_{p_n} - u^*}^2
    \begin{aligned}[t]
      &-  \frac{\lambda_1}{p_n}
      \int_\Omega \abs{u_{p_n}}^{p_n-2} u_{p_n}(u_{p_n} - u^*)
      +  \frac{\lambda_1}{2} \int_\Omega u^* (u_{p_n} - u^*)
    \end{aligned}
  \end{align*}
  show that $u_{p_n} \to u_*$ in $H$.
\end{proof}

Remark that, from propositions~\ref{bound},~\ref{weak}
and~\ref{away}, we get the first conclusion of
Theorem~\ref{Thm-intro:simple}.

\section{Symmetries and uniqueness via implicit function
  theorem}
\label{sec:IFT}

In this section, we prove
the uniqueness (up to its sign) in $H$ of a ground state
solution (resp.\ least energy nodal solution) to
Problem~\eqref{eq:pbmPlambda} when $\dim E_1 = 1$ (resp.\ $\dim E_2 = 1$).
To start, we consider the following family of problems parametrized by
$2<p<2^*_s$ and  $\lambda \in\IR$:
\begin{equation}
  \label{eq:pbmIFT}
  \begin{cases}
    (- \L_K + V)u = \lambda \abs{u}^{p-2}u,
    &\text{in }
    \Omega,\\
    u=0, &\text{in } \IR^N \setminus \Omega,\\
    \norm{u}=1. &
  \end{cases}
\end{equation}

\begin{Prop}
  \label{propIFT}
  When $\dim E_1 =1$ (resp.\ $\dim E_2=1$), there exists a unique curve
  of solutions $p \mapsto (p,u_p^*,\lambda_p)$
  solving~\eqref{eq:pbmIFT} starting from $(2,\phi_1,
  \lambda_1)$ (resp.\ $(2,\phi_2, \lambda_2)$)
  where $\phi_1 \in E_1$ with $\norm{\phi_1} = 1$
  (resp.\ $\phi_2 \in E_2$ with $\norm{\phi_2} = 1$).
  There is also a unique curve of solutions starting
  from $(2, -\phi_1, \lambda_1)$
  (resp.\ $(2, -\phi_2, \lambda_2)$) which is given by
  $p \mapsto (p, -u_p^*,\lambda_p)$.
\end{Prop}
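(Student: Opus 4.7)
The plan is to apply the Implicit Function Theorem to a smooth map whose zeros parametrize the solutions of \eqref{eq:pbmIFT}, the hypothesis $\dim E_1 = 1$ (resp.\ $\dim E_2 = 1$) being used to ensure invertibility at $p = 2$. Define the Nemytskii--Riesz operator $A_p : H \to H$ by $\langle A_p(u), v\rangle = \int_\Omega \abs{u}^{p-2} u v \intd x$ for all $v \in H$, well-defined and jointly $C^1$ in $(p, u)$ for $p$ near $2$ thanks to the continuous embedding $H \hookrightarrow L^q$ for $q \in [1, 2^*_s]$. Consider
\begin{equation*}
  F : (2-\delta, 2+\delta) \times H \times \IR \to H \times \IR,\qquad
  F(p, u, \lambda) := \bigl(u - \lambda A_p(u),\ \norm{u}^2 - 1\bigr),
\end{equation*}
whose zeros are exactly the solutions of \eqref{eq:pbmIFT}. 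Clearly $F(2, \phi_1, \lambda_1) = 0$, since $\phi_1$ is a normalized eigenfunction.

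The main step is to prove that $L := D_{(u,\lambda)} F(2, \phi_1, \lambda_1)$, given by
\begin{equation*}
  L(h, \mu) = \bigl(h - \lambda_1 A_2(h) - \mu A_2(\phi_1),\ 2 \langle \phi_1, h\rangle\bigr),
\end{equation*}
is a linear isomorphism from $H \times \IR$ onto itself. Because $H \hookrightarrow L^2$ is compact, $A_2$ is a compact self-adjoint operator, so $I - \lambda_1 A_2$ is Fredholm of index zero with kernel $E_1 = \IR\phi_1$ and range $E_1^\perp$ (orthogonal in $H$). For injectivity, if $L(h, \mu) = 0$, testing the first component against $\phi_1$ and using that $\phi_1$ is an eigenfunction yields $\mu \abs{\phi_1}_2^2 = 0$, hence $\mu = 0$; then $h \in E_1 = \IR\phi_1$ combined with $\langle h, \phi_1\rangle = 0$ forces $h = 0$. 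For surjectivity, given $(g, c) \in H \times \IR$, the Fredholm alternative determines $\mu$ uniquely so that $g + \mu A_2(\phi_1) \in E_1^\perp$; solving for $h_0 \in E_1^\perp$ and then setting $h := h_0 + \tfrac{c}{2}\phi_1$ adjusts the normalization component to~$c$.

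The IFT then produces a unique $C^1$ curve $p \mapsto (u_p^*, \lambda_p)$ with $(u_2^*, \lambda_2) = (\phi_1, \lambda_1)$ solving \eqref{eq:pbmIFT} on a neighborhood of $p = 2$; the argument is identical with $\phi_2$ and $\lambda_2$ in the nodal case. For the curve from $(2, -\phi_1, \lambda_1)$, note that $A_p(-u) = -A_p(u)$, so $F(p, -u, \lambda)$ has first component $-(u - \lambda A_p(u))$ and the same second component as $F(p, u, \lambda)$; consequently the involution $u \mapsto -u$ preserves the zero set of $F$, and by uniqueness the curve starting at $(2, -\phi_1, \lambda_1)$ is precisely $p \mapsto (-u_p^*, \lambda_p)$.

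The main obstacle is the invertibility of $L$, which crucially relies on $\dim E_1 = 1$ to force first $\mu = 0$ and then $h = 0$; when $\dim E_k > 1$ the argument breaks down, explaining why a separate Lyapunov--Schmidt reduction (Section~\ref{sec:laypunov}) is needed for Theorem~\ref{Thm-intro:simple3}. A secondary technical point is establishing the joint $C^1$ regularity of $F$ near $p = 2$, which is routine from Nemytskii theory given the embedding $H \hookrightarrow L^q$.
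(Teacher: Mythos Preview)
Your proposal is correct and follows essentially the same route as the paper: the map $F$ coincides with the paper's $\psi$ (your $A_p$ is just the Riesz representation of $(-\L_K+V)^{-1}(|u|^{p-2}u)$), and your injectivity/surjectivity arguments for the linearization match the paper's, with only cosmetic differences in the order of the steps. Your treatment of the second curve via the odd involution $u\mapsto -u$ is slightly more explicit than the paper's, but equivalent.
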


\begin{proof}
  We make the proof for the ground states, the other case being
  similar.  Let $\psi$ be the function
  \begin{equation*}
    \psi : (2,2^*_s) \times H \times \IR \to H\times \IR : (p,u,\lambda)
    \mapsto \bigl(u - \lambda (-\L_K + V)^{-1}(\abs{u}^{p-2}u),\
    \norm{u}^2-1\bigr),
  \end{equation*}
  so that $(p, u, \lambda)$ is a root of $\psi$ if and only if $u$
  is a solution to~\eqref{eq:pbmIFT}.  To pursue our
  goal, we shall use the implicit function theorem as well as the
  closed graph theorem. First, we have to show that the Fr\'echet
  derivative of $\psi$ at $(2,\phi_1,\lambda_1)$ with respect to
  $(u,\lambda)$ is bijective on $H\times\IR$. Let us remark that
  \begin{equation}
    \label{eq:uaun}
    \partial_{(u,\lambda)}\psi(2,\phi_1,\lambda_1)[(v,t)]
    = \bigl(v - \lambda_1
    (-\L_K + V)^{-1} v - t (-\L_K + V)^{-1}\phi_1,\
    2 \langle \phi_1, v\rangle\bigr).
  \end{equation}
  For injectivity, let us start by showing that
  $\partial_{(u,\lambda)}\psi(2,\phi_{1},\lambda_{1})[(v,t)] = 0$ if and
  only if
  \begin{equation}
    \label{eq:1to1}
    \begin{cases}
      v - \lambda_{1}(-\L_K + V)^{-1}v =0,\\
      t=0,\\
      v \text{ is orthogonal to } \phi_{1} \text{ in } H
      \text{.}
    \end{cases}
  \end{equation}
  Clearly, \eqref{eq:1to1} is sufficient.  For its necessity,
  observe that the second component of~\eqref{eq:uaun}
  implies that $\phi_{1}$ is orthogonal to $v$ in $H$ and
  thus also in $L^2(\Omega)$ because $\phi_1$ is an eigenfunction.
  Taking the $L^2$-inner product of the first component
  of~\eqref{eq:uaun} with $\phi_1$ yields $t=0$, hence
  completing the equivalence.  Now, the only solution
  of~\eqref{eq:1to1} is $(v,t)=(0,0)$
  because the first equation and the dimention~$1$ of $E_1$  imply $v=
  \alpha \phi_1$ for some $\alpha\in \IR$ and then the third property
  implies $v=0$.  This
  concludes the proof of the injectivity.
  Let us now show that, for any $(w,s) \in H \times \IR$, the
  equation $\partial_{(u,\lambda)}\psi(2,\phi_{1},\lambda_{1})[(v,t)] =
  (w,s)$ always possesses at least one solution $(v,t) \in H
  \times \IR$.  One can write $w = \bar w \phi_1 + \widetilde w$ for some
  $\bar w \in \IR$ and $\widetilde w \in H$ orthogonal to $\phi_1$ in $H$.
  Similarly, one can decompose $v = \bar v \phi_1 + \widetilde v$.
  Arguing as for the first part, the equation can be written
  \begin{equation}
    \label{eq:onto}
    \begin{cases}
      \widetilde v - \lambda_{1}(-\L_K + V)^{-1} \widetilde v =
      \widetilde w,\\
      t = -\lambda_1 \bar w,\\
      \bar v = s/2  \text{.}
    \end{cases}
  \end{equation}
  The existence of the solution
  $\widetilde v$ results from the Fredholm alternative.
  This
  concludes the proof that
  $\partial_{(u,\lambda)}\psi(2,\phi_{1},\lambda_{1})$ is onto and thus of
  the existence and uniqueness of the branch $p \mapsto
  (p,u_{p}^*,\lambda_p)$
  emanating from $(2,\phi_{1},  \lambda_{1})$.
  It is clear that $p \mapsto (p, -u_{p}^*, \lambda_p)$ is a branch
  emanating from $(2, -\phi_{1}, \lambda_{1})$ and, using as above the
  implicit function theorem at that point, we know it is the only one.
\end{proof}

\begin{Thm}
  \label{thm:main}
  Assume $\dim E_1=1$ (resp.\ $\dim E_2=1$).
  For $p$ close to $2$,
  ground state solutions (resp.\ least energy nodal solutions)
  to~\eqref{eq:pbmPlambda} are
  unique (up to their sign) and
  possess the same symmetries as $\phi_1$ (resp.\ $\phi_2$).
\end{Thm}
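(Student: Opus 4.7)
The plan is to combine the strong convergence established in Proposition~\ref{convH} with the local uniqueness of the implicit-function-theorem branch from Proposition~\ref{propIFT}. Throughout, set $i=1$ in the ground state case and $i=2$ in the least energy nodal case, and let $\phi_i$ be a generator of the one-dimensional eigenspace $E_i$ normalized by $\norm{\phi_i}=1$. The bridge between~\eqref{eq:pbmPlambda} and~\eqref{eq:pbmIFT} is the observation that if $u$ solves~\eqref{eq:pbmPlambda} with $\lambda=\lambda_i$, then $\tilde u := u/\norm{u}$ together with $\tilde\lambda := \lambda_i\norm{u}^{p-2}$ is a zero of the map $\psi$ from the proof of Proposition~\ref{propIFT}.

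I would argue uniqueness by contradiction: suppose there is a sequence $p_n\to 2$ and ground states (resp.\ least energy nodal solutions) $u_{p_n}$ of~\eqref{eq:pbmPlambda} with $u_{p_n}\neq\pm\nu_{p_n}u_{p_n}^*$, where $\nu_{p_n}:= (\lambda_{p_n}/\lambda_i)^{1/(p_n-2)}$ and $(p,u_p^*,\lambda_p)$ is the branch from Proposition~\ref{propIFT}. By Propositions~\ref{bound}, \ref{away} and~\ref{convH}, a subsequence of $u_{p_n}$ converges in $H$ to some nonzero $\phi^*\in E_i$; since $\dim E_i=1$, $\phi^*=\alpha\phi_i$ with $\alpha\ne 0$. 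Normalizing gives $\tilde u_{p_n}\to\sigma\phi_i$ with $\sigma:=\sign\alpha\in\{-1,+1\}$, and the $H$-boundedness of $u_{p_n}$ away from $0$ and $\infty$ forces $\tilde\lambda_{p_n}\to\lambda_i$. The local uniqueness in Proposition~\ref{propIFT} then yields $(\tilde u_{p_n},\tilde\lambda_{p_n}) = (\sigma u^*_{p_n},\lambda_{p_n})$ for $n$ large, whence $\norm{u_{p_n}}=\nu_{p_n}$ and $u_{p_n}=\sigma\nu_{p_n}u^*_{p_n}$, contradicting the choice of $u_{p_n}$.

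For the symmetries, let $R$ be a reflection preserving $\Omega$; the symmetry of $K$ implies invariance of the inner product~\eqref{eq:equiv-inner-prod} and of the $L^p$ norm, hence of $\tilde\E_p$, under the pull-back $u\mapsto u\circ R$. Consequently $u_p\circ R$ is again of the same variational type as $u_p$, and uniqueness up to sign forces $u_p\circ R=\varepsilon_p u_p$ with $\varepsilon_p\in\{-1,+1\}$. On the other hand, $R$-invariance of $-\L_K+V$ combined with $\dim E_i=1$ and $\norm{\phi_i\circ R}=\norm{\phi_i}$ gives $\phi_i\circ R=\varepsilon_i\phi_i$ with $\varepsilon_i\in\{-1,+1\}$ (automatically $\varepsilon_1=+1$ since $\phi_1>0$). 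Passing to the limit along a subsequence where $u_p\to\alpha\phi_i$ in the identity $u_p\circ R=\varepsilon_p u_p$ yields $\alpha\varepsilon_i\phi_i=\varepsilon_p\alpha\phi_i$, so $\varepsilon_p=\varepsilon_i$ for $p$ close to~$2$, which is exactly the asserted symmetry or antisymmetry.

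The main obstacle—already resolved by the preparatory propositions—is guaranteeing that the pair $(\tilde u_p,\tilde\lambda_p)$ enters the neighborhood in which Proposition~\ref{propIFT} provides local uniqueness. This is precisely the role of Propositions~\ref{bound}, \ref{weak}, \ref{away} and~\ref{convH}: they deliver strong $H$-convergence of $u_p$ to a nonzero element of the one-dimensional eigenspace $E_i$, which in turn controls both $\tilde u_p$ and the Lagrange multiplier $\tilde\lambda_p$.
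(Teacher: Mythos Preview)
Your proof is correct and follows essentially the same approach as the paper: normalize the solution to land in the setting of Proposition~\ref{propIFT}, use the strong $H$-convergence from Proposition~\ref{convH} to place $(\tilde u_p,\tilde\lambda_p)$ in the uniqueness neighborhood of the implicit-function-theorem branch, and conclude. Your treatment of the symmetry part is a slight variant: you first establish uniqueness up to sign and then invoke it to write $u_p\circ R=\varepsilon_p u_p$, determining $\varepsilon_p$ by passing to the limit; the paper instead reapplies the IFT argument directly to the reflected solution $u'_{p_n}$ --- but these are the same idea packaged differently. One small implicit hypothesis you should make explicit (the paper is equally tacit about it): the invariance of $\tilde\E_p$ and of the operator $-\L_K+V$ under $u\mapsto u\circ R$ requires not only $R(\Omega)=\Omega$ but also $K\circ R=K$ and $V\circ R=V$; without these, neither $u_p\circ R$ need be of the same variational type nor need $\phi_i$ be an eigenvector of the reflection.
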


\begin{proof}
  We make the argument for the ground state solutions as it is
  identical for the other case.
  Let $(u_p)_{2<p<2^*_s}$ be a family of ground state solutions
  to
  Problem~\eqref{eq:pbmPlambda}
  and $p_n \to 2$.  It suffices to show that, up to a subsequence,
  $(u_{p_n})$ possess the same symmetries as $\phi_1$.
  Thanks to Proposition~\ref{convH}, we can assume
  without loss of generality
  that $u_{p_n} \to u_*\in
  E_1\setminus\{0\}$.
  Thus $u_* = \alpha \phi_1$ for some $\alpha\ne 0$.
  Notice
  that $u$ is a solution to~\eqref{eq:pbmPlambda} if
  and only if ${u}/{\norm{u}}$ is a solution to~\eqref{eq:pbmIFT}
  with $\lambda = \lambda_1 \norm{u}^{p-2}$.
  Also, since the family $(u_{p})$ remains bounded
  away from~$0$, one has $u_{p_n} \norm{u_{p_n}}^{-1}\to
  \sign(\alpha) \phi_1$
  and $\lambda_1 \norm{u_{p_n}}^{p_n-2}\to \lambda_1$.
  Then, for $n$ large,
  Proposition~\ref{propIFT} implies
  $u_{p_n} \norm{u_{p_n}}^{-1} = \sign(\alpha) \, u^*_{p_n}$
  Hence $u_{p_n}$ is unique up to its sign.
  Also, $u_{p_n}$ respects the \text{(anti-)symmetries} of $\phi_1$.
  Indeed, let us
  consider a direction $d$ such that $\phi_1$ is symmetric (resp.\
  anti-symmetric) with respect to $d$.   If $u_{p_n}$ is
  not, let us consider $u'_{p_n}$ the symmetric (resp.\
  anti-symmetric) image
  of $u_{p_n}$.  Because $\phi_1$ is
  symmetric (resp.\
  anti-symmetric) in the direction~$d$, $u'_{p_n} \to
  \alpha \phi_1$ (resp.\ $u'_{p_n} \to -\alpha \phi_1$).   Arguing as
  before, we conclude that
   $u_{p} \norm{u_{p}}^{-1} = \pm\text{sign}(\alpha) \, u^*_{p}
    = \pm  u'_{p} \norm{u'_{p}}^{-1}$,
  which concludes the proof.
\end{proof}

This directly gives the second conclusion of
Theorem~\ref{Thm-intro:simple} and thus completes it proof.

\section{Asymptotic symmetries : Lyapunov-type reduction}
\label{sec:laypunov}

In this section, we present an abstract symmetry result which is
useful when  $\dim E_1\ne 1$ or $\dim E_2 \ne 1$.  By
Propositions~\ref{bound} and~\ref{away}, it will
give the proof of Theorem~\ref{Thm-intro:simple3}.  The idea is to
show that, for $p$ close to $2$, a priori
bounded solutions of \eqref{eq:pbmP} can be distinguished by their
projections on the  eigenspaces $E_i$. This will follow from
Proposition \ref{Prop:uniqueness} below.

\begin{Lem}\label{Lem:uniq}
  Let $i\ge 1$. There exists $\varepsilon>0$ such that if
  $a \in L^{N/(2s)}(\Omega)$ satisfies
  $\abs{a-\lambda_{i}}_{N/(2s)}<\varepsilon$ and $u$ solves
  \[
  \left\{
    \begin{aligned}
      -\L_K u + V u &= a(x)u, & &\text{in $\Omega$,}\\
      u&=0, & &\text{in $\IR^N \setminus  \Omega$,}
    \end{aligned}
  \right.
  \]
  then $P_{E_{i}}u = 0 \limplies u=0$
  where $P_{E_{i}}$ is the orthogonal projector on~$E_i$.
\end{Lem}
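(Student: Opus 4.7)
The plan is to argue by contradiction, exploiting the compact embedding $H\hookrightarrow L^q$ for $q<2^*_s$ and the fact that $E_i$ is finite dimensional, so $P_{E_i}$ is weakly continuous. Concretely, assume the conclusion fails: then for every $n\in\IN$ there exist $a_n\in L^{N/(2s)}(\Omega)$ with $\abs{a_n-\lambda_i}_{N/(2s)}<1/n$ and $u_n\ne 0$ solving the equation with $P_{E_i}u_n=0$. Normalize so that $\norm{u_n}=1$ and, up to a subsequence, assume $u_n\rightharpoonup u_*$ in $H$ and $u_n\to u_*$ in $L^q(\Omega)$ for any $q<2^*_s$.

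The first step is to pass to the limit in the weak formulation in order to identify $u_*$ as an element of $E_i$. For any fixed $v\in H$, write
\begin{equation*}
  \langle u_n,v\rangle = \int_\Omega a_n u_n v \intd x
  = \int_\Omega (a_n-\lambda_i) u_n v \intd x + \lambda_i \int_\Omega u_n v \intd x .
\end{equation*}
Applying H\"older's inequality with exponents $N/(2s)$, $2^*_s$, $2^*_s$ (note $\tfrac{2s}{N}+\tfrac{2(N-2s)}{2N}=1$) together with Sobolev's inequality, the first term is bounded by $\abs{a_n-\lambda_i}_{N/(2s)}\norm{u_n}\,\abs{v}_{2^*_s}\to 0$, while the second converges to $\lambda_i\int_\Omega u_* v$ by the $L^2$-convergence of $u_n$. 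Hence $\langle u_*,v\rangle=\lambda_i\int_\Omega u_* v$ for every $v\in H$, so $u_*\in E_i$. Since $P_{E_i}u_n=0$ and $P_{E_i}$ has finite-dimensional range (hence is weakly continuous on $H$), it follows that $P_{E_i}u_*=0$; combined with $u_*\in E_i$, this forces $u_*=0$.

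The second step is to upgrade the weak convergence to strong convergence in $H$, which will contradict $\norm{u_n}=1$. Testing the equation with $u_n$ itself yields
\begin{equation*}
  \norm{u_n}^2 = \int_\Omega a_n u_n^2 \intd x
  = \int_\Omega (a_n-\lambda_i) u_n^2 \intd x + \lambda_i \abs{u_n}_2^2 .
\end{equation*}
By the same H\"older argument the first term tends to $0$, and the compact embedding gives $\abs{u_n}_2^2\to\abs{u_*}_2^2$, so $\norm{u_n}^2\to\lambda_i\abs{u_*}_2^2=\norm{u_*}^2$ (the last equality because $u_*\in E_i$). Combined with $u_n\rightharpoonup u_*$, this gives $u_n\to u_*$ strongly in $H$, whence $1=\norm{u_n}\to\norm{u_*}=0$, a contradiction.

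The only real subtlety will be ensuring the H\"older exponents line up so that the ``linearization error'' $\int (a_n-\lambda_i) u_n v$ genuinely vanishes uniformly in $n$; this is exactly where the choice of the critical exponent $N/(2s)$ for $a$ is dictated, being the dual of half of $2^*_s$. Everything else is routine weak convergence bookkeeping, and no regularity beyond the energy space~$H$ is needed.
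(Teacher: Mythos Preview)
Your proof is correct, but it follows a different route from the paper's. The paper argues directly: given a single $a$ and a nontrivial $u$ with $P_{E_i}u=0$, it decomposes $u=w+z$ with $w\in E_1\oplus\cdots\oplus E_{i-1}$ and $z\in (E_1\oplus\cdots\oplus E_i)^\perp$, tests the equation against $w$ and $z$, and uses the spectral gap (via the inequalities $\abs{w}_2^2\le\lambda_{i-1}^{-1}\norm{w}^2$ and $\abs{z}_2^2\le\lambda_{i+1}^{-1}\norm{z}^2$) together with the same H\"older estimate you use to obtain
\[
\norm{w}\le \tfrac{\lambda_{i-1}C}{\lambda_i-\lambda_{i-1}}\abs{a-\lambda_i}_{N/(2s)}\norm{u},
\qquad
\norm{z}\le \tfrac{\lambda_{i+1}C}{\lambda_{i+1}-\lambda_i}\abs{a-\lambda_i}_{N/(2s)}\norm{u},
\]
which contradicts $\norm{u}^2=\norm{w}^2+\norm{z}^2$ once $\abs{a-\lambda_i}_{N/(2s)}$ is small. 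Your compactness/contradiction argument is softer: it avoids the explicit spectral decomposition and the gap constants, at the price of giving no quantitative $\varepsilon$. Both proofs hinge on the same critical H\"older pairing $\tfrac{2s}{N}+\tfrac{2}{2^*_s}=1$; the paper's version has the advantage of an explicit threshold, while yours is arguably more transparent and would adapt verbatim to settings where the spectral projections are less explicit.
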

\begin{proof}
  Assume by contradiction that
  there exists a nontrivial solution $u$ such that $P_{E_{i}}u =
  0$. Let $w=P_{E_{1}\oplus \cdots\oplus E_{i-1}}u$
  (with $w=0$ if $i=1$ so one does not need~\eqref{eq:norm-w}) and
  $z=P_{(E_1\oplus \cdots\oplus E_{i})^\perp} u$. Taking
  successively $w$ and $z$ as test functions and using Poincar\'e,
  Sobolev and H\"older inequalities, we infer that
  \begin{align*}
    \norm{w}^2
    &= \lambda_{i}\abs{w}_{2}^{2}
    + \int_{\Omega}(a(x)-\lambda_i)uw \intd x
    \ge \frac{\lambda_i}{\lambda_{i-1}} \norm{w}^2
    - C\abs{a(x)-\lambda_i}_{\frac{N}{2s}} \norm{w} \norm{u},\\
    \norm{z}^2
    &= \lambda_{i}\abs{z}_{2}^{2}
    + \int_{\Omega}(a(x)-\lambda_i)uz \intd x
    \le \frac{\lambda_i}{\lambda_{i+1}} \norm{z}^2
    + C \abs{a(x)-\lambda_i}_{\frac{N}{2s}} \norm{ z} \norm{ u}.
  \end{align*}
  We deduce that
  \begin{align}
    \label{eq:norm-w}
      \norm{w}
      &\le \frac{\lambda_{i-1}C}{\lambda_{i}-\lambda_{i-1}}
    \abs{a-\lambda_i}_{\frac{N}{2s}}\norm{ u},\\
    \label{eq:norm-z}
      \norm{z}
      &\le \frac{\lambda_{i+1}C}{\lambda_{i+1}-\lambda_i}
    \abs{a-\lambda_i}_{\frac{N}{2s}}\norm{u}.
  \end{align}
  Since $\norm{u}^2 = \norm{w}^2 + \norm{z}^2$, we get a contradiction
  when $ \abs{a-\lambda_i}_{{N}/({2s})}$ is small enough
  for the coefficients of $\norm{u}$ in
  \eqref{eq:norm-w}--\eqref{eq:norm-z} to be less than~$1$.
\end{proof}

The next result must be compared with the use of the implicit function
theorem in the previous section.  Note that, this time,
uniqueness is not guaranteed.

\begin{Prop}\label{Prop:uniqueness}
  Let $i \ge 1$.
  Let $(u_p)_{2 < p < 2^*_s}$ and $(v_p)_{2 < p < 2^*_s}$ be two
  families of solutions to
  \[
  \left\{
    \begin{aligned}
      -\L_K u + V u &= \lambda_i \abs{u}^{p-2}u,
      & &\text{in $\Omega$,}\\
      u&=0, & &\text{in $\IR^N \setminus  \Omega$,}
    \end{aligned}
  \right.
  \]
  Let $p_n \to 2$ be
  such that $u_{p_n} \rightharpoonup \phi_i$
  for some $\phi_i \in E_i\setminus\{0\}$,
  $(v_{p_n})$ is bounded in~$H$, and
  the Lebesgue measure of
  the zero set of $\phi_i$, namely $\{ x \in \Omega : \phi_i(x) = 0 \}$,
  is zero.
  If, for $n$ large, $P_{E_{i}}u_{p_n} = P_{E_{i}}v_{p_n}$, then,
  for all $n$ large enough, $u_{p_n} = v_{p_n}$.
\end{Prop}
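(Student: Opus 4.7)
The plan is to argue by contradiction: assume that, up to extracting a subsequence still denoted $(p_n)$, $w_n := u_{p_n} - v_{p_n} \ne 0$. By hypothesis $P_{E_{i}} w_n = 0$. Subtracting the equations for $u_{p_n}$ and $v_{p_n}$ and using the elementary identity
\[
\abs{u}^{p-2}u - \abs{v}^{p-2}v
= (p-1)(u-v)\int_0^1 \abs{tu+(1-t)v}^{p-2} \intd t ,
\]
the difference $w_n$ is seen to solve the linear problem $-\L_K w_n + V w_n = a_n(x)\, w_n$ in $\Omega$, with $w_n = 0$ in $\complement\Omega$, where
\[
a_n(x) := \lambda_i (p_n - 1) \int_0^1 \abs{tu_{p_n}(x) + (1-t)v_{p_n}(x)}^{p_n - 2} \intd t .
\]
Once I prove $\abs{a_n - \lambda_i}_{N/(2s)} \to 0$, Lemma~\ref{Lem:uniq} applied to $w_n$ yields $w_n = 0$, hence the desired contradiction; the whole proof thus reduces to this $L^{N/(2s)}$-convergence.

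To use the hypothesis $P_{E_i} u_{p_n} = P_{E_i} v_{p_n}$, I would first show that $v_{p_n} \to \phi_i$ along the same subsequence. Boundedness of $(v_{p_n})$ in $H$ gives, up to a subsequence, $v_{p_n} \rightharpoonup v_*$ weakly in $H$ and strongly in $L^q$ for $q < 2^*_s$. Passing to the limit in the weak formulation, exactly as in Proposition~\ref{weak}, yields $v_* \in E_i$. Since $E_i$ is finite-dimensional, $P_{E_i} u_{p_n} \to \phi_i$ strongly; the equality $P_{E_i} v_{p_n} = P_{E_i} u_{p_n}$ together with the weak continuity of $P_{E_i}$ then gives $v_* = P_{E_i} v_* = \phi_i$. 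In particular, both $u_{p_n}$ and $v_{p_n}$ converge a.e.\ to $\phi_i$.

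The main step is the $L^{N/(2s)}$-convergence of $a_n$ to $\lambda_i$. Since $\{\phi_i = 0\}$ has measure zero, for a.e.\ $x \in \Omega$ and every $t \in [0,1]$ one has $tu_{p_n}(x) + (1-t)v_{p_n}(x) \to \phi_i(x) \ne 0$. Combined with $p_n - 2 \to 0^+$, this gives $\abs{tu_{p_n} + (1-t)v_{p_n}}^{p_n-2} \to 1$ a.e.\ on $\Omega \times [0,1]$, and hence $a_n \to \lambda_i$ a.e.\ in $\Omega$. To upgrade this pointwise convergence to $L^{N/(2s)}$, I would use the crude bound $\abs{s}^{p_n - 2} \le 1 + \abs{s}^{\delta}$ for every $s \in \IR$, valid whenever $p_n - 2 \le \delta$, where $\delta > 0$ is chosen small enough that $\delta \, N/(2s) < 2^*_s$. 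The Sobolev embedding $H \hookrightarrow L^{2^*_s}$ then ensures that $\abs{u_{p_n}}^{\delta} + \abs{v_{p_n}}^{\delta}$ is bounded in $L^{2^*_s/\delta}$ with $2^*_s/\delta > N/(2s)$, which provides the equi-integrability needed to apply Vitali's convergence theorem.

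The main obstacle is precisely this equi-integrability: the function $\abs{s}^{p_n-2}$ develops a singularity at $s = 0$ in the limit $p_n \to 2^+$, which is why the hypothesis that $\{\phi_i = 0\}$ has null Lebesgue measure is crucial to discard this set; moreover, the dominating family must lie in a space strictly larger than $L^{N/(2s)}$ (not merely be bounded there), which is why we use $L^{2^*_s/\delta}$ with the strict inequality $\delta\, N/(2s) < 2^*_s$.
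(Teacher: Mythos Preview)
Your proof is correct and follows essentially the same route as the paper's: argue by contradiction, show $v_{p_n} \rightharpoonup \phi_i$ via the projection hypothesis, rewrite the difference $u_{p_n}-v_{p_n}$ as a solution of a linear equation with coefficient $a_n$, prove $a_n \to \lambda_i$ in $L^{N/(2s)}$, and invoke Lemma~\ref{Lem:uniq}. The only difference is the convergence tool for $a_n$: the paper bounds the integrand by $\abs{u_{p_n}}^{p_n-2}+\abs{v_{p_n}}^{p_n-2}$ and applies Lebesgue's dominated convergence theorem on $\Omega\setminus\{\phi_i=0\}$, whereas you use the uniform bound $\abs{s}^{p_n-2}\le 1+\abs{s}^{\delta}$ together with Vitali's theorem on all of~$\Omega$ --- a cosmetic variation rather than a different strategy.
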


\begin{proof}
  Suppose on the contrary that there is a subsequence,
  still denoted $(p_n)$, such that,
  for all $n$, $P_{E_{i}}u_{p_n} = P_{E_{i}}v_{p_n}$
  and $u_{p_n} \ne v_{p_n}$.
  Since $(v_{p_n})$ is bounded in $H$, up
  to a subsequence, $v_{p_n} \rightharpoonup v_*$ in $H$ for some $v_*
  \in H$.  Clearly $\phi_i = P_{E_i} \phi_i = P_{E_i} v_*$.
  Compact embeddings of $H$ imply that $v_{p_n} \to v_*$ in
  $L^q(\Omega)$ for every $q \in [1, 2^*_s)$ and thus $v_* \in E_i$.
  Therefore $v_* = \phi_i$.
  Observe that
  \begin{equation}
    \label{eqDiff}
    \left\{
      \begin{aligned}
        (-\L_K + V) (u_p-v_p)&=a_p(x)(u_p-v_p),
        & & \text{in $\Omega$},\\
        u_p-v_p & = 0, & & \text{in $\IR^N\setminus \Omega$},
      \end{aligned}
    \right.
  \end{equation}
  where
  \begin{equation*}
    a_p(x)
    := (p-1) \int_0^1
    \bigabs{v_p(x) + \theta(u_p(x) - v_p(x))}^{p-2} \intd\theta .
  \end{equation*}
  It is readily seen that $a_{p_n}(x) \to \lambda_i$ for a.e.\ $x$
  such that $\phi_i(x) \ne 0$.
  Noting that $\bigabs{v_p(x) + \theta(u_p(x) - v_p(x))}^{p-2}
  \le \abs{v_p(x)}^{p-2} + \abs{u_p(x)}^{p-2}$,
  we can apply Lebesgue's dominated convergence
  theorem to deduce that $a_{p_n} \to \lambda_i$
  in $L^{N/(2s)}(\Omega \setminus \{\phi_i = 0\})$.
  Since $\{\phi_i = 0\}$ has zero measure, this convergence also holds
  in $L^{N/(2s)}(\Omega)$.

  In particular, for $n$ large enough,
  $\abs{a_{p}-\lambda_i}_{{N}/({2s})} < \varepsilon$
  where $\varepsilon > 0$ is given by Lemma~\ref{Lem:uniq}.
  Since $P_{E_{i}}(u_{p_n}-v_{p_n}) = 0$, Lemma \ref{Lem:uniq} implies
  $u_{p_n} = v_{p_n}$.  This contradiction concludes the proof.
\end{proof}

\begin{Rem}
In the nonlocal setting, the unique continuation property is a difficult subject and it
has only recently been investigated in \cite{felli}. In particular, by \cite[Theorem 1.4]{felli},  
the Lebesgue measure of $\{ x \in \Omega : \phi_i(x) = 0 \}$ is indeed equal to zero for the model operator $-\L_K=(-\Delta)^s$.
\end{Rem}

\begin{Thm}\label{Thm:abssym}
  Let $(u_p)_{2 < p < 2^*_s}$ be a family of ground state (resp.\
  least energy nodal) solutions to Problem~\eqref{eq:pbmPlambda}
  and let $i = 1$ (resp.\ $i = 2$).
  Let $G$ be a group acting on $H$ in such a
  way that there exists $C > 0$ so that,
  for every $g\in G$, $u \in H$, and $p$ close to~$2$,
  \begin{equation*}
    \text{(i)}\hspace{0.7em} g(E_i)=E_i,\qquad
    \text{(ii)}\hspace{0.7em} g(E_i^{\perp})=E_i^{\perp}, \qquad
    \text{(iii)}\hspace{0.7em} \E_{p}(gu)=\E_{p}(u), \qquad
    \text{(iv)}\hspace{0.7em} \norm{gu} \le C \norm{u}.
  \end{equation*}
  Assume the zero set of any functions in $E_i\setminus\{0\}$
  has zero Lebesgue measure.
  Then, for $p$ close enough to~$2$,
  $u_{p}$ is invariant under the isotropy group
  $G_{\alpha_{p}} = \{ g \in G : g \alpha_p = \alpha_p\}$
  of $\alpha_{p}:=P_{E_i}u_{p}$.
\end{Thm}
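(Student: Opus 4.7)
The plan is to argue by contradiction, reducing the symmetry statement to the uniqueness assertion of Proposition~\ref{Prop:uniqueness}. Suppose the conclusion fails: then there exist a sequence $p_n \to 2$ and, for each~$n$, an element $g_n \in G_{\alpha_{p_n}}$ with $g_n u_{p_n} \neq u_{p_n}$. Setting $v_{p_n} := g_n u_{p_n}$, I will show that the two families $(u_{p_n})$ and $(v_{p_n})$ satisfy every hypothesis of Proposition~\ref{Prop:uniqueness}; its conclusion $u_{p_n} = v_{p_n}$ for $n$ large will then be the required contradiction.

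The first step is to produce a suitable limit. Proposition~\ref{convH} provides, along a subsequence, $u_{p_n} \to \phi_i^* \in E_i \setminus \{0\}$ strongly in~$H$, and the standing assumption ensures that the zero set $\{\phi_i^* = 0\}$ has Lebesgue measure zero. The second step is to verify the three ingredients that Proposition~\ref{Prop:uniqueness} demands. First, $v_{p_n}$ solves the same equation as $u_{p_n}$: the invariance~(iii) forces $g_n$ to send critical points of the associated functional to critical points, so $v_{p_n}$ still solves~\eqref{eq:pbmPlambda}. Second, $(v_{p_n})$ is bounded in~$H$: indeed $\norm{v_{p_n}} \le C\, \norm{u_{p_n}}$ by~(iv), and Proposition~\ref{bound} gives a uniform bound on $\norm{u_{p_n}}$. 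Third, the projections on $E_i$ agree: hypotheses~(i)--(ii) combined with the linearity of the $G$-action yield $P_{E_i} \circ g_n = g_n \circ P_{E_i}$, so that
\[
  P_{E_i} v_{p_n} = g_n \, P_{E_i} u_{p_n} = g_n \alpha_{p_n} = \alpha_{p_n} = P_{E_i} u_{p_n},
\]
where the third equality follows from $g_n \in G_{\alpha_{p_n}}$.

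Applying Proposition~\ref{Prop:uniqueness} with $\phi_i = \phi_i^*$ then delivers $u_{p_n} = v_{p_n}$ for all $n$ large enough, contradicting our assumption and completing the proof. The point that deserves the most care is the commutation $P_{E_i} \circ g_n = g_n \circ P_{E_i}$: hypotheses~(i) and~(ii) alone only assert that $E_i$ and $E_i^\perp$ are set-theoretically invariant, so one must also invoke the linearity of the $G$-action (implicit in~(iv) and automatic in the intended applications, where $g$ is a reflection $R$ with $R(\Omega) = \Omega$) in order to conclude that $g_n$ respects the orthogonal decomposition vector-wise. Apart from this subtlety, the argument is a clean application of the uniqueness machinery built in this section, with Proposition~\ref{convH} supplying the convergence input and Proposition~\ref{bound} supplying the uniform bound on the auxiliary family.
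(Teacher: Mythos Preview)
Your proof is correct and follows essentially the same argument as the paper: argue by contradiction, pass to a subsequence converging in $E_i\setminus\{0\}$ via Proposition~\ref{convH}, use (iii) to see that $g_n u_{p_n}$ is again a solution, use (iv) for boundedness, use (i)--(ii) together with linearity to commute $P_{E_i}$ and $g_n$, and conclude by Proposition~\ref{Prop:uniqueness}. Your observation that linearity of the action is tacitly needed for the commutation step is apt; the paper uses it in the same way without making it explicit.
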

\begin{proof}
  Suppose on the contrary that there exists sequences $p_n \to 2$
  and $g_n \in G_{\alpha_{p_n}}$, where $\alpha_{p_n} := P_{E_i}u_{p_n}$,
  such that $g_n u_{p_n} \ne u_{p_n}$ for all~$n$.
  According to Proposition~\ref{convH}, one can assume w.l.o.g.\ that
  $u_{p_n} \to \phi^*_i \in E_i\setminus\{0\}$.

  It follows from~(iii) that, for all $v \in H$,
  $\E'_{p_n}(g u_{p_n})[v] = \E'_{p_n}(u_{p_n})[g^{-1} v]$,
  so $g_n u_{p_n}$ are also solutions to Problem~\eqref{eq:pbmPlambda}.
  Moreover, given that
  \begin{equation*}
    g u_{p} = g(P_{E_i}u_p) + g(P_{E_i^\perp} u_p)
    \qquad\text{with }
    g(P_{E_i}u_p) \in E_i  \text{ and }
    g(P_{E_i^\perp} u_p) \in E_i^\perp,
  \end{equation*}
  one deduces that $P_{E_i}(g u_p) = g(P_{E_i}u_p)$.
  In particular, $P_{E_i}(g_n u_{p_n}) = g_n \alpha_{p_n} = \alpha_{p_n}$.
  As a consequence,
  $P_{E_i}(g_n u_{p_n}) = P_{E_i}(u_{p_n})$.
  Moreover, property (iv) implies that $(g_n u_{p_n})$ is bounded
  in~$H$.
  Proposition~\ref{Prop:uniqueness} thus implies that
  $g_n u_{p_n} = u_{p_n}$ for $n$ large which contradicts our initial
  negation of the thesis.
\end{proof}

\begin{Thm}[Localization of limit functions]
  \label{thm:localization}
  Let $(u_p)_{p > 2}$ be a family of least energy nodal solutions
  to Problem~\eqref{eq:pbmPlambda}.  Let $p_n \to 2$ be such that
  $u_{p_n} \to u_*$ in~$H$.
  Then $u_* \in E_2\setminus\{0\}$ and it achieves
  the minimum of the reduced functional
  \begin{equation*}
    \E_* : E_2 \to \IR : u \mapsto
    \tfrac{1}{2} \int_\Omega u^2 - u^2 \ln u^2
  \end{equation*}
  subject to the constraint $u \in \mathcal{N}_*$ where
  $\mathcal{N}_*$ is the reduced Nehari manifold
  \begin{equation*}
    \mathcal{N}_* := \bigl\{ u \in E_2\setminus\{0\} :
    \E'_*(u)[u] = 0 \bigr\}.
  \end{equation*}
  In particular, $u_*$ satisfies
  \begin{equation}
    \label{eq:limit-equation}
    \begin{cases}
      (-\L_K + V) u_* = \lambda_2 u_*& \text{in } \Omega,\\[1\jot]
      u_* = 0&  \text{in } \IR^N\setminus\Omega,\\[1\jot]
      \displaystyle
      \int_\Omega u_* \ln\abs{u_*} \, v = 0&
      \text{for all } v \in E_2.
    \end{cases}
  \end{equation}
\end{Thm}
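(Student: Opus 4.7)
The plan is to establish three assertions in turn: that $u_*$ is a nonzero element of~$E_2$, that it satisfies the logarithmic orthogonality condition of~\eqref{eq:limit-equation}, and that it minimizes $\E_*$ on~$\mathcal{N}_*$.

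\textbf{Step 1: The limit is a nonzero eigenfunction.} Propositions~\ref{weak} and~\ref{away} applied to $(u_{p_n})$ give $u_* \in E_2 \setminus \{0\}$, which yields the first two lines of~\eqref{eq:limit-equation}.

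\textbf{Step 2: Logarithmic orthogonality.} For an arbitrary $v \in E_2$, test the equation for $u_{p_n}$ against~$v$. Since $v$ is a $\lambda_2$-eigenfunction, $\langle u_{p_n}, v\rangle = \lambda_2 \int_\Omega u_{p_n} v$, and the equation becomes
\begin{equation*}
  \int_\Omega u_{p_n}\, v \cdot \frac{\abs{u_{p_n}}^{p_n - 2} - 1}{p_n - 2} \intd x = 0.
\end{equation*}
The elementary bound $\bigabs{\tfrac{a^{p-2}-1}{p-2}} \le (1 + a^{\bar{p}-2}) \abs{\ln a}$ for any fixed $\bar p < 2^*_s$, combined with the strong convergence of $(u_{p_n})$ in $L^q(\Omega)$ for every $q < 2^*_s$, allows one to pass to the limit via Lebesgue's dominated convergence theorem and conclude that $\int_\Omega u_* \ln\abs{u_*}\, v \intd x = 0$; this is the third line of~\eqref{eq:limit-equation}. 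Taking $v = u_*$ in particular gives $u_* \in \mathcal{N}_*$.

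\textbf{Step 3: Minimality on~$\mathcal{N}_*$.} Fix $w \in \mathcal{N}_*$; by Lemma~\ref{dimE1=1} $w$ changes sign, so there exist unique $t_{p_n}^\pm > 0$ such that $w_{p_n} := t_{p_n}^+ w^+ + t_{p_n}^- w^- \in \tilde{\mathcal{M}}_{p_n}$ (cf.\ page~\pageref{proj-nodal-Nehari}). The argument in the proof of Proposition~\ref{bound} shows that $(t_{p_n}^\pm)$ is bounded, so, up to a subsequence, $t_{p_n}^\pm \to t_*^\pm$. Substituting the eigenfunction identities~\eqref{eq:eigenfunctions+-} into~\eqref{eq:t+}--\eqref{eq:t-}, the difference equation at zeroth order in $p_n - 2$ (using $\langle w^+, w^-\rangle > 0$) forces $t_*^+ = t_*^-$; the sum equation, expanded to first order, then produces
\begin{equation*}
  \ln t_*^+ \cdot \abs{w}_2^2 + \int_\Omega w^2 \ln\abs{w} \intd x = 0.
\end{equation*}
Since $w \in \mathcal{N}_*$ means $\int_\Omega w^2 \ln\abs{w} = 0$, one deduces $t_*^\pm = 1$, and hence $w_{p_n} \to w$ in~$H$. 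Because $\tilde\E_{p_n}(u) = \tfrac{p_n - 2}{2 p_n}\norm{u}^2$ on~$\tilde{\mathcal{M}}_{p_n}$, the least-energy property $\tilde\E_{p_n}(u_{p_n}) \le \tilde\E_{p_n}(w_{p_n})$ reduces to $\norm{u_{p_n}}^2 \le \norm{w_{p_n}}^2$. Passing to the limit and using $u_*, w \in E_2$ yields $\abs{u_*}_2^2 \le \abs{w}_2^2$. A direct computation shows $\E_*(v) = \tfrac{1}{2}\abs{v}_2^2$ for every $v \in \mathcal{N}_*$, and therefore $\E_*(u_*) \le \E_*(w)$, which concludes the minimization.

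\textbf{Main obstacle.} The delicate point lies in Step~3: the zeroth-order projection system is degenerate, as any common value $t_*^+ = t_*^- > 0$ satisfies it. Identifying $t_*^\pm = 1$ requires a first-order expansion in $p_n - 2$, and this is exactly where the defining constraint of~$\mathcal{N}_*$ enters; in this sense, the reduced functional~$\E_*$ arises as the leading-order asymptotics of $\tilde\E_{p_n}$ near $p = 2$.
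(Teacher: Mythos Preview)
Your argument follows essentially the same three-step strategy as the paper's proof, and Steps~1 and~2 are fine (the paper writes the derivative as $\frac{1}{2-p_n}\tilde\E'_{p_n}(u_{p_n})[v]$ and integrates in $q$, but the content is identical to your difference-quotient formulation).

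There is, however, one genuine gap in Step~3. After extracting subsequential limits $t^\pm_{p_n}\to t^\pm_*$ and deducing $t^+_*=t^-_*$ from the zeroth-order difference, you immediately divide the ``sum equation'' by $p_n-2$ and pass to the limit to obtain $\ln t^+_*\,\abs{w}_2^2+\int_\Omega w^2\ln\abs{w}=0$. But this passage requires $t^\pm_*>0$: if $t^\pm_{p_n}\to 0$, the quantity $(t^\pm_{p_n})^{p_n-2}=\exp\bigl((p_n-2)\ln t^\pm_{p_n}\bigr)$ is an indeterminate form, and the first-order expansion is not well defined (formally $\ln t^+_*=-\infty$, so your limiting identity gives no information). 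Nothing in your argument so far excludes $t^\pm_*=0$.

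The paper closes this gap by a sign-splitting device: from the relation
\[
t^+_{p_n}\bigl(\abs{w^+}_2^2-(t^+_{p_n})^{p_n-2}\abs{w^+}_{p_n}^{p_n}\bigr)
= -\,t^-_{p_n}\bigl(\abs{w^-}_2^2-(t^-_{p_n})^{p_n-2}\abs{w^-}_{p_n}^{p_n}\bigr)
\]
the two brackets have opposite signs; on the sub-family where, say, the right bracket is $\le 0$ one gets the explicit lower bound
\[
t^-_{p_n}\ \ge\ \Bigl(\abs{w^-}_2^2\big/\abs{w^-}_{p_n}^{p_n}\Bigr)^{1/(p_n-2)}
\ \xrightarrow[n\to\infty]{}\ \exp\Bigl(-\textstyle\int_\Omega \abs{w^-}^2\ln\abs{w^-}\big/\abs{w^-}_2^2\Bigr)>0,
\]
whence $t^+_*=t^-_*>0$, and only then is the first-order limit legitimate. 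Once you insert this positivity step, your proof and the paper's coincide; your use of $\norm{u_{p_n}}^2\le\norm{w_{p_n}}^2$ in place of the paper's $\abs{u_{p_n}}_{p_n}^{p_n}\le\abs{w_{p_n}}_{p_n}^{p_n}$ is an equivalent reformulation on~$\tilde{\mathcal N}_{p_n}$.
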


\begin{Rem}\ 
  \begin{enumerate}
  \item Quantities like $s \ln s$ are understood as being $0$ when $s
    = 0$.
  \item For all $v \in E_2\setminus\{0\}$, there exists a unique
    $t_v > 0$ such that $t_v v \in \mathcal{N}_*$.  This $t_v$ is
    given by the explicit formula $t_v = \exp\bigl( - \int_\Omega v^2
    \ln\abs{v} \intd x / \abs{v}_2^2 \bigr)$.  Since $v \mapsto t_v$
    is continuous and $\mathcal{N}_*$ is the image of the unit sphere
    of $E_2$ under the map $v \mapsto t_v \, v$, $\mathcal{N}_*$ is
    compact.  Therefore, there exists a $v_* \in \mathcal{N}_*$ that
    achieves the minimum of $\E_*$ on $\mathcal{N}_*$.  Moreover, for
    all $v \in \mathcal{N}_*$ and all $t \ge 0$, $\E_*(tv) =
    \tfrac{1}{2} t^2 (1 - \ln t^2) \abs{v}_2^2 \le \E_*(v)$ so the
    reduced functional $\E_*$ possesses a Mountain-Pass structure.
  \end{enumerate}
\end{Rem}

\begin{proof}
  Propositions~\ref{weak} and~\ref{away} imply that
  $u_* \in E_2\setminus\{0\}$.
  Let $v \in E_2$.  We have
  \begin{align*}
    0 = \frac{1}{2 - p_n} \E'_{p_n}(u_{p_n})[v]
    &= \lambda_2 \int_\Omega
    \frac{u_{p_n} - \abs{u_{p_n}}^{p_n-2} u_{p_n}}{2 - p_n} \, v\\
    &= \lambda_2  \int_\Omega
    \frac{1}{2 - p_n}
    \int_2^{p_n} \ln\abs{u_{p_n}} \, \abs{u_{p_n}}^{q-2} u_{p_n} v
    \intd q \intd x
    \xrightarrow[n \to \infty]{}
    \lambda_2 \int_\Omega u_* \ln\abs{u_*} \, v.
  \end{align*}
  Thus $u_*$ satisfies~\eqref{eq:limit-equation}.
  Since $\E'_*(u)[v] = -2 \int_\Omega u \ln\abs{u}\, v$, $u_*$ is a critical
  point of $\E_*$ and, in particular, $u_* \in \mathcal{N}_*$.  It
  remains to show that $u_*$ achieves the minimal value of $\E_*$ on
  $\mathcal{N}_*$.

  Let $v \in \mathcal{N}_*$.
  Set $v_p := t^+_p v^+ + t^-_p v^-$ where $t^\pm > 0$ are the
  unique positive reals such that $v_p \in \mathcal{M}_p$ (they exist
  because $v$ changes sign, see page~\pageref{proj-nodal-Nehari}).
  Let $p_n \to 2$.
  Arguing as in the proof of Proposition~\ref{bound}, one can show
  that $(t_{p_n}^\pm)$ are bounded.  So, up to subsequences,
  $t_{p_n}^\pm \to t^\pm$ for some $t^\pm \in [0,\infty)$.
  Passing to the limit on equation~\eqref{eq:t+}
  and using~\eqref{eq:eigenfunctions+-}, one finds that
  $(t^- - t^+) \langle v^+, v^- \rangle = 0$ and so that
  $t^+ = t^-$.
  %
  In addition, as in the proof of Proposition~\ref{bound},
  we can also assume w.l.o.g.\ that
  \begin{equation}
    \label{eq:relation:t+t-}
    \forall n,\quad
    \delta_n :=
    t^+_{p_n} \bigl(
    \abs{v^+}_2^2 - (t_{p_n}^+)^{p_n-2} \abs{v^+}_{p_n}^{p_n} \bigr)
    = - t^-_{p_n} \bigl(
    \abs{v^-}_2^2 - (t_{p_n}^-)^{p_n-2} \abs{v^-}_{p_n}^{p_n} \bigr)
    \ge 0 .
  \end{equation}
  Using the fact that the bracket of the right expression is
  non-positive and passing to the limit (similarly to
  eq.~\eqref{eq:bound:t+}) yields
  \begin{equation*}
    t^+
    = t^- \ge \exp\biggl( -
    \frac{\int_\Omega \ln\abs{v^-} \, \abs{v^-}^2}{
      \abs{v^-}_2^2 } \biggr)
    > 0.
  \end{equation*}
  Thus $(t^\pm_{p_n})^{p_n - 2} \to 1$ and so $\delta_n \to 0$.
  Dividing~\eqref{eq:relation:t+t-} by $2 - p_n$ and passing to the
  limit gives
  \begin{equation}
    \label{eq:t+/t-}
    t^+ \Bigl( \ln t^+ \abs{v^+}_2^2
    + \int_\Omega \ln\abs{v^+} \, \abs{v^+}^2 \Bigl)
    = - t^- \Bigl( \ln t^- \abs{v^-}_2^2
    + \int_\Omega \ln\abs{v^-} \, \abs{v^-}^2 \Bigl)
  \end{equation}
  where we used the elementary identity $t^{p-2} \abs{v}_p^p - \abs{v}_2^2
  = \int_2^p t^{q-2} \ln t \, \abs{v}_q^q
  + t^{q-2} \int_\Omega \ln\abs{v}\, \abs{v}^q \intd x \intd q$,
  for all $v \in H$ and $t > 0$,
  to compute the limit.
  Since $t^+ = t^-$, \eqref{eq:t+/t-} can be rewritten
  \begin{equation*}
    \ln t^+ \, \abs{v}_2^2
    + \int_\Omega \ln\abs{v} \, \abs{v}^2  = 0.
  \end{equation*}
  Recalling that $v \in \mathcal{N}_*$ means $\int \ln\abs{v} \,
  \abs{v}^2 = 0$, one deduces that $t^+ = 1 = t^-$.
  Thus $v_p \to v$.

  Because $u_{p_n}$ has least energy on $\mathcal{M}_{p_n}$,
  $\tilde\E_{p_n}(u_{p_n}) \le \tilde\E_{p_n}(v_{p_n})$.
  Because $u_{p_n}$ and $v_{p_n}$ belong to $\mathcal{N}_{p_n}$, this
  is equivalent to $\abs{u_{p_n}}_{p_n}^{p_n}
  \le \abs{v_{p_n}}_{p_n}^{p_n}$.  Passing to the limit
  and using the fact that $u_*, v \in \mathcal{N}_*$ yield
  the desired inequality
  \begin{math}
    2 \E_*(u_*) = \abs{u_*}_2^2
    \le \abs{v}_2^2 = 2 \E_*(v)
  \end{math}.
\end{proof}

\section{Numerical examples}

In this section, we illustrate our results by numerical computations.
We consider
the particular case of  the fractional Laplacian
problem~\eqref{eq:pbmF} for some values of $s\in (0,1]$.
The functional and its derivatives are computed thanks to the Finite
Element Method.  Ground states (resp.\ least energy nodal solutions)
are approximated using the the Mountain-Pass Algorithm (resp.\ the
Modified Mountain-Pass Algorithm) \cite{MPA,neu2}.

Let us give some details on the computation of the various quantities.
Given a mesh of the domain $\Omega$, the integrals $\int_\Omega V
u^2$, $\int_\Omega \abs{u}^p$,... are approximated using standard
quadrature rules on each element of the mesh.  The hardest part for
evaluating the functional $\E_p$ and its derivatives is clearly the
computation of the stiffness matrix.  More precisely, if
$(\phi_i)_{i=1}^n$ denotes the usual FEM basis consisting of ``hat
functions'' for each interior node of the mesh, we need to compute
\begin{equation}
  \label{eq:stiffness}
  \langle \phi_i,\phi_j \rangle_H
  = \int_{\IR^N\times\IR^N} \bigl(\phi_i(x)-\phi_i(y)\bigr)
  \bigl(\phi_j(x)-\phi_j(y)\bigr) K(x-y)\intd (x, y).
\end{equation}
The two difficulties are that the kernel $K$ is singular and the
domain is unbounded.
The convergence of the finite element method for this type of
non-local operator was proved by Marta D'Elia and Max
Gunzburger~\cite{Elia-Gunzburger}.  In order to
compute~\eqref{eq:stiffness}, they restrict their attention to $N=1$,
use an ``interaction domain'' $\Omega_{\mathcal{I}} \subseteq \IR^N
\setminus \Omega$ and assume that $K$ vanishes outside a ball of
``large'' radius.  In this paper, we deal directly
with~\eqref{eq:stiffness} posed on $\IR^N\times\IR^N$ with $K(x) =
\tfrac{1}{2} c_{N,s} \abs{x}^{-N-2s}$.  The reason it is possible
results from a couple of remarks.  First notice that
\begin{equation*}
  \mathbf{S}_i := \supp\bigl((x,y) \mapsto \phi_i(y) - \phi_i(x)\bigr)
  = (\supp \phi_i \times \IR^N) \cup (\IR^N \times \supp\phi_i).
\end{equation*}
Thus the integral in~\eqref{eq:stiffness} has only to be considered on
$\mathbf{S}_i \cap \mathbf{S}_j$.  For brevity, let us write
$S_i := \supp\phi_i$.  Expanding $\mathbf{S}_i \cap \mathbf{S}_j$ and
remarking that the integral~\eqref{eq:stiffness} is unchanged if one
swaps $x$ and $y$, one deduces that
\begin{equation*}
  \label{eq:slipt-integral}
  \int_{\mathbf{S}_i \cap \mathbf{S}_j}
  = 2 \int_{(S_i \cap S_j) \times \complement(S_i \cup S_j)}
  + \int_{(S_i \cap S_j)^2}
  + 2 \int_{(S_i \setminus S_j) \times S_j}
  + 2 \int_{(S_i \cap S_j) \times (S_j \setminus S_i)}  .
\end{equation*}
Among these four sets, the sole unbounded one is $(S_i \cap S_j)
\times \complement(S_i \cup S_j)$.  If $\supp\phi_i \cap \supp\phi_j$
has zero Lebesgue measure, i.e., if $i$ and $j$ are not indices of
neighboring nodes, then the integral boils down to
\begin{equation*}
  \langle \phi_i,\phi_j \rangle_H
  = -2\int_{\supp\phi_i \times \supp\phi_j}
  \phi_i(x)\phi_j(y) K(x-y)\intd (x, y),
\end{equation*}
where $K(x-y)$ is non-singular except when $x = y \in \supp\phi_i \cap
\supp\phi_j$ (often empty) where both $\phi_i$ and $\phi_j$ vanish.
If $\supp\phi_i \cap \supp\phi_j$ has non-zero measure, then the
integral on the
unbounded set must be taken into account.  However, it simplifies to
\begin{multline}
  \label{eq:integ-exterior}
  \int_{(S_i \cap S_j) \times \complement(S_i \cup S_j)}
  \bigl(\phi_i(x)-\phi_i(y)\bigr)
  \bigl(\phi_j(x)-\phi_j(y)\bigr) K(x-y)\intd (x, y)\\
  = \int_{S_i \cap S_j}
  \phi_i(x) \phi_j(x)
  \biggl( \int_{\complement(S_i \cup S_j)} K(x-y)\intd y \biggr)\intd x,
\end{multline}
and therefore to estimate this integral it is enough to be able to
estimate the integral of $K$ in a neighborhood of infinity.

\begin{figure}[ht]
  \newcommand{\xmin}{-0.5}
  \begin{minipage}[b]{0.48\linewidth}
    \centering
    \begin{tikzpicture}[x=4ex, y=4ex]
      \newcommand{\xmax}{7}%
      \fill[color=red!20]
      (2,\xmin) -- (2,2) -- (\xmin, 2) -- (\xmin, 4.5) -- (2,4.5) -- (2,\xmax)
      -- (4.5,\xmax) -- (4.5,4.5) -- (\xmax,4.5) -- (\xmax, 2) -- (4.5,2)
      -- (4.5,\xmin) -- cycle;
      \fill[color=blue!30, semitransparent]
      (2,\xmin) -- (2,2) -- (\xmin, 2) -- (\xmin, 4.5) -- (2,4.5) -- (2,\xmax)
      -- (4.5,\xmax) -- (4.5,4.5) -- (\xmax,4.5) -- (\xmax, 2) -- (4.5,2)
      -- (4.5,\xmin) -- cycle;
      \foreach \t/\l in {2/i-1, 3/i, 4.5/i+1} {%
        \draw[color=gray!70] (\t,\xmin) -- (\t, \xmax);
        \draw[color=gray!70] (\xmin,\t) -- (\xmax, \t);
        \node[left] at (0,\t) {$x_{\l}$};
      }
      \node[below, xshift=0.5ex] at (2,0) {\rotatebox{-30}{$x_{i-1}$}};
      \node[below] at (3,0) {$x_{i}$};
      \node[below, xshift=1ex] at (4.5,0) {\rotatebox{-30}{$x_{i+1}$}};
      \node[below right] at (4.5,2) {$\mathbf{S}_i = \mathbf{S}_j$};

      \draw[->] (\xmin,0) -- (\xmax,0) node[below]{$x$};
      \draw[->] (0,\xmin) -- (0,\xmax) node[left]{$y$};
      \draw[dashed] (\xmin,\xmin) -- (\xmax, \xmax)
      node[pos=0.88, sloped, yshift=1ex]{$x=y$};
    \end{tikzpicture}
    \caption{Case $i = j$}
    \label{fig:supp i=j}
  \end{minipage}
  \hfill
  \begin{minipage}[b]{0.48\linewidth}
    \centering
    \begin{tikzpicture}[x=4ex, y=4ex]
      \newcommand{\xmax}{7}%
      \fill[color=red!20]
      (1.8,\xmin) -- (1.8,1.8) -- (\xmin, 1.8) -- (\xmin, 4.5)
      -- (1.8,4.5) -- (1.8,\xmax)
      -- (4.5,\xmax) -- (4.5,4.5) -- (\xmax,4.5) -- (\xmax, 1.8) -- (4.5,1.8)
      -- (4.5,\xmin) -- cycle;
      \fill[color=blue!30, semitransparent]
      (3,\xmin) -- (3,3) -- (\xmin, 3) -- (\xmin, 5.5) -- (3,5.5) -- (3,\xmax)
      -- (5.5,\xmax) -- (5.5,5.5) -- (\xmax,5.5) -- (\xmax, 3) -- (5.5,3)
      -- (5.5,\xmin) -- cycle;
      \foreach \t/\l in {1.8/i-1, 3/i, 4.5/i+1, 5.5/i+2} {%
        \draw[color=gray!70] (\t,\xmin) -- (\t, \xmax);
        \draw[color=gray!70] (\xmin,\t) -- (\xmax, \t);
        \node[left] at (0,\t) {$x_{\l}$};
      }
      \node[below, xshift=0.5ex] at (1.8,0) {\rotatebox{-30}{$x_{i-1}$}};
      \node[below] at (3,0) {$x_{i}$};
      \node[below, xshift=1ex] at (4.5,0) {\rotatebox{-30}{$x_{i+1}$}};
      \node[below, xshift=1ex] at (5.5,0) {\rotatebox{-30}{$x_{i+2}$}};
      \node[left] at (\xmax, 2.35) {$\mathbf{S}_i$};
      \node[left] at (\xmax, 4.95) {$\mathbf{S}_j$};

      \draw[->] (\xmin,0) -- (\xmax,0) node[below]{$x$};
      \draw[->] (0,\xmin) -- (0,\xmax) node[left]{$y$};
      \draw[dashed] (\xmin,\xmin) -- (\xmax, \xmax)
      node[pos=0.93, sloped, yshift=1ex]{$x=y$};
    \end{tikzpicture}
    \caption{Case $i + 1 = j$}
  \end{minipage}
\end{figure}

\begin{figure}[ht]
  \newcommand{\xmin}{-0.5}
  \begin{minipage}[b]{0.55\linewidth}
    \centering
    \begin{tikzpicture}[x=4ex, y=4ex]
      \newcommand{\xmax}{10}%
      \fill[color=red!20]
      (2,\xmin) -- (2,2) -- (\xmin, 2) -- (\xmin, 4.3) -- (2,4.3) -- (2,\xmax)
      -- (4.3,\xmax) -- (4.3,4.3) -- (\xmax,4.3) -- (\xmax, 2) -- (4.3,2)
      -- (4.3,\xmin) -- cycle;
      \fill[color=blue!30, semitransparent]
      (5.8,\xmin) -- (5.8,5.8) -- (\xmin, 5.8) -- (\xmin, 8)
      -- (5.8,8) -- (5.8,\xmax)
      -- (8,\xmax) -- (8,8) -- (\xmax,8) -- (\xmax, 5.8) -- (8,5.8)
      -- (8,\xmin) -- cycle;
      \foreach \t/\l in {2/i-1, 3/i ,4.3/i+1,  5.8/j-1, 7/j, 8/j+1} {%
        \draw[color=gray!70] (\t,\xmin) -- (\t, \xmax);
        \draw[color=gray!70] (\xmin,\t) -- (\xmax, \t);
        \node[left] at (0,\t) {$x_{\l}$};
      }
      \node[below, xshift=0.5ex] at (2,0) {\rotatebox{-30}{$x_{i-1}$}};
      \node[below] at (3,0) {$x_{i}$};
      \node[below, xshift=1ex] at (4.3,0) {\rotatebox{-30}{$x_{i+1}$}};
      \node[below, xshift=0.5ex] at (5.8,0) {\rotatebox{-30}{$x_{j-1}$}};
      \node[below] at (7,0) {$x_{j}$};
      \node[below, xshift=1ex] at (8,0) {\rotatebox{-30}{$x_{j+1}$}};
      \node[left] at (\xmax, 2.45) {$\mathbf{S}_i$};
      \node[left] at (\xmax, 6.3) {$\mathbf{S}_j$};

      \draw[->] (\xmin,0) -- (\xmax,0) node[below]{$x$};
      \draw[->] (0,\xmin) -- (0,\xmax) node[left]{$y$};
      \draw[dashed] (\xmin,\xmin) -- (\xmax, \xmax)
      node[pos=0.93, sloped, yshift=1ex]{$x=y$};
    \end{tikzpicture}
    \caption{Case $i + 1 < j$}
    \label{fig:supp i+1<j}
  \end{minipage}
\end{figure}

For the one-dimensional case ($N=1$) where $\Omega$ is an interval,
the mesh is simply given by points $x_1 < x_2 < \cdots < x_M$ such that
$\Omega = \mathopen]x_1, x_M\mathclose[$.
The various possibilities for the sets $\mathbf{S}_i \cap
\mathbf{S}_j$ are depicted in Fig.~\ref{fig:supp i=j}--\ref{fig:supp
  i+1<j}.  For $K(x) = \tfrac{1}{2} c_{1,s} \abs{x}^{-1-2s}$,
the integrals on the various rectangles or unbouded strips in
Fig.~\ref{fig:supp i=j}--\ref{fig:supp i+1<j},
amount to compute
\begin{equation}
  \label{eq:elem-integ}
  \int_a^b \int_c^d
  \frac{\sum_{i,j=0}^2 q_{ij} x^i y^j}{
    \abs{y-x}^p} \intd y \intd x
\end{equation}
where $-\infty \le a < b \le c < d \le +\infty$, and $p \in \IR$.
Note that, thanks to the symmetry w.r.t.\ the diagonal, one may 
only integrate on $\{ (x,y) \mid y \ge x\}$ and remove the absolute value.
It is tedious but elementary to explicitly compute
integrals of the type~\eqref{eq:elem-integ} and thus to have a precise
estimate of the stiffness matrix at a low cost.

As Marta D'Elia and Max Gunzburger~\cite{Elia-Gunzburger} did, one can
judge the convergence of the method by comparing the FEM solution to
the explicit solution to $(-\Delta)^s u = 1$ on $\Omega = B(0,R)$,
namely
\begin{equation}
  \label{eq:explicit-sol}
  u^*(x) = 2^{-2s} \frac{\Gamma(N/2)}{\Gamma(N/2+s) \Gamma(1+s)}
  \bigl( R^2 - \abs{x}^2 \bigr)^s,
  \qquad x \in B(0,R).
\end{equation}
For a given $s$, let us denote $u_M$ the FEM solution to $(-\Delta)^s
u = 1$ on a mesh with $M$ nodes.  Figure~\ref{fig:error} shows the
errors $\norm{u_M - u^*}_H$ and $\abs{u_M - u^*}_2$ as functions
of~$M$.  These graphs suggest that $\norm{u_M - u^*}_H = O(M^{-0.5})$
and $\abs{u_M - u^*}_2 = O(M^{-0.8})$.

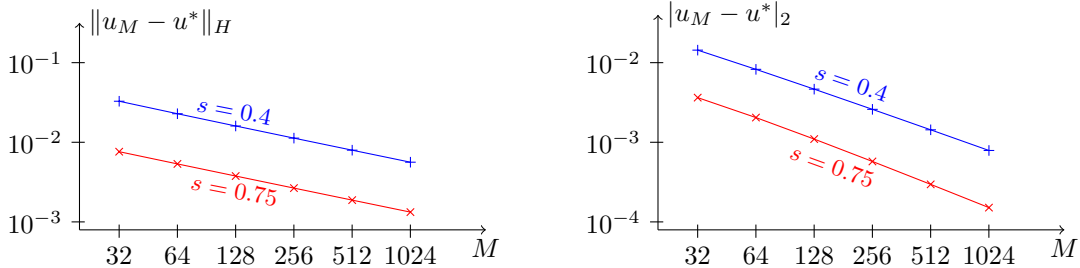
\begin{figure}[ht]
  \begin{minipage}[b]{0.48\linewidth}
  \centering
  \begin{tikzpicture}[x=17ex, y=7ex]
    \draw[->] (1.3,-3.1) -- +(2.1, 0) node[below]{$M$};
    \draw[->] (1.3,-3.1) -- +(0,2.6) node[right]{$\norm{u_M - u^*}_H$};
    \foreach \x/\l in {1.505/32, 1.806/64, 2.107/128, 2.408/256,
      2.709/512, 3.010/1024}{%
      \draw (\x, -3.1) +(0,+2pt) -- +(0,-3pt) node[below]{$\l$};
    }
    \foreach \y in {-1, -2, -3}{%
      \draw (1.3,\y) +(2pt,0) -- +(-3pt,0) node[left]{$10^{\y}$};
    }
    \draw[color=red] plot[mark=x] file {graphs/err_norm_0.75.dat};
    \node[color=red] at (2.1,-2.6) {\small \rotatebox{-11}{$s=0.75$}};
    \draw[color=blue] plot[mark=+] file {graphs/err_norm_0.4.dat};
    \node[color=blue] at (2.1,-1.6) {\small \rotatebox{-11}{$s=0.4$}};
  \end{tikzpicture}    
  \end{minipage}
  \hfill
  \begin{minipage}[b]{0.48\linewidth}
  \centering
  \begin{tikzpicture}[x=17ex, y=7ex]
    \draw[->] (1.3,-4.1) -- +(2.1, 0) node[below]{$M$};
    \draw[->] (1.3,-4.1) -- +(0,2.7) node[right]{$\abs{u_M - u^*}_2$};
    \foreach \x/\l in {1.505/32, 1.806/64, 2.107/128, 2.408/256,
      2.709/512, 3.010/1024}{%
      \draw (\x, -4.1) +(0,+2pt) -- +(0,-3pt) node[below]{$\l$};
    }
    \foreach \y in {-2, -3, -4}{%
      \draw (1.3,\y) +(2pt,0) -- +(-3pt,0) node[left]{$10^{\y}$};
    }
    \draw[color=red] plot[mark=x] file {graphs/err_norm_L2_0.75.dat};
    \node[color=red] at (2.2, -3.3) {\small \rotatebox{-20}{$s=0.75$}};
    \draw[color=blue] plot[mark=+] file {graphs/err_norm_L2_0.4.dat};
    \node[color=blue] at (2.3, -2.25) {\small \rotatebox{-19}{$s=0.4$}};
  \end{tikzpicture}    
  \end{minipage}

  \caption{Errors $\norm{u_M - u^*}_H$ and $\abs{u_M - u^*}_2$
    w.r.t.\ the number of nodes $M$.}
  \label{fig:error}
\end{figure}

Let us now turn to the non-linear problem~\eqref{eq:pbmF} with $V =
0$, $p = 4$ and $\Omega = \mathopen]-1, 1\mathclose[$.  The initial
function for the Mountain-Pass Algorithm (resp.\ the Modified
Mountain-Pass Algorithm) is $u_0(x) = \cos(\pi x / 2)$ (resp.\ $u_0(x)
= \sin(\pi x)$) and the algorithms stop when $\norm{\nabla \E_p}_H \le
10^{-2}$.  The ground state and l.e.n.s.\ are plotted in
Fig.~\ref{fig:sols1D} for several values of~$s$.  Some characteristics
of the solutions are given in Table~\ref{table:sols1D}.  Note that,
for $p$ fixed,
the smaller $s$ is, the more concentrated around~$0$ (resp.\ around
$\pm 1/2$) the ground state (resp.\ the l.e.n.s.)\ becomes.
This contrasts with the linear case $(-\Delta)^s u = 1$ where the
solution~\eqref{eq:explicit-sol} goes to~$1$ as $s \to 0$.

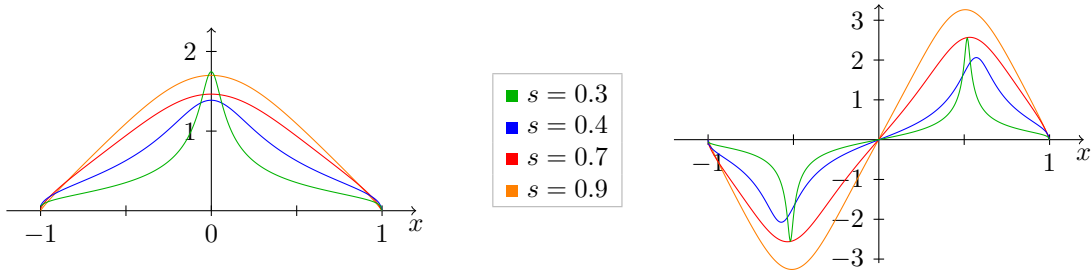
\begin{figure}[ht]
  \begin{minipage}{0.4\linewidth}
    \centering
    \begin{tikzpicture}[x=15ex, y=7ex]
      \draw[->] (-1.2,0) -- (1.2,0) node[below]{$x$};
      \draw[->] (0,0) -- (0, 2.3);
      \foreach \x in {-1, 0, 1}{%
        \draw (\x, 2pt) -- (\x,-2pt) node[below]{$\x$};
      }
      \draw (0.5, 2pt) -- (0.5, -2pt);
      \draw (-0.5, 2pt) -- (-0.5, -2pt);
      \foreach \y in {1, 2}{%
        \draw (2pt, \y) -- (-2pt, \y) node[left]{$\y$};
      }
      \draw[color=orange] plot file{graphs/u1D_0.9_4.dat};
      \draw[color=red] plot file{graphs/u1D_0.7_4.dat};
      \draw[color=blue] plot file{graphs/u1D_0.4_4.dat};
      \draw[color=darkgreen] plot file{graphs/u1D_0.3_4.dat};
    \end{tikzpicture}
  \end{minipage}
  \hfill
  \arrayrulecolor{gray!60}
  \begin{tabular}{|l|}
    \hline
    \textcolor{darkgreen}{\rule{1ex}{1ex}} $s = 0.3$\rule{0pt}{2.4ex}\\
    \textcolor{blue}{\rule{1ex}{1ex}} $s = 0.4$\\
    \textcolor{red}{\rule{1ex}{1ex}} $s = 0.7$\\
    \textcolor{orange}{\rule{1ex}{1ex}} $s = 0.9$\\
    \hline
  \end{tabular}
  \hfill
  \begin{minipage}{0.4\linewidth}
    \centering
    \begin{tikzpicture}[x=15ex, y=3.5ex]
      \draw[->] (-1.2,0) -- (1.2,0) node[below]{$x$};
      \draw[->] (0,-3.1) -- (0, 3.4);
      \foreach \x in {-1, 1}{%
        \draw (\x, 2pt) -- (\x,-2pt) node[below]{$\x$};
      }
      \draw (0.5, 2pt) -- (0.5, -2pt);
      \draw (-0.5, 2pt) -- (-0.5, -2pt);
      \foreach \y in {-3, -2, -1, 1, 2, 3}{%
        \draw (2pt, \y) -- (-2pt, \y) node[left]{$\y$};
      }
      \draw[color=orange] plot file{graphs/u1D_nodal_0.9_4.dat};
      \draw[color=red] plot file{graphs/u1D_nodal_0.7_4.dat};
      \draw[color=blue] plot file{graphs/u1D_nodal_0.4_4.dat};
      \draw[color=darkgreen] plot file{graphs/u1D_nodal_0.3_4.dat};
    \end{tikzpicture}
  \end{minipage}
  \caption{Ground state and l.e.n.s. for $s \in \{0.3, 0.4, 0.7, 0.9\}$.}
  \label{fig:sols1D}
\end{figure}

\begin{table}[!ht]
  \begin{tabular}{c cc ccc}
    $s$& $\E_4(u_1)$& $\max u_1$& $\E_4(u_2)$& $\max u_2$& $\min u_2$\\
    \hline
    $0.3$& 0.29& 1.7 &  \hphantom{1}0.74& $2.5$& $-2.5$\\
    $0.4$& 0.38& 1.4 &  \hphantom{1}1.41& $2.1$& $-2.1$\\
    $0.7$& 0.76& 1.5 &  \hphantom{1}6.45& $2.6$& $-2.6$\\
    $0.9$& 1.39& 1.7 &  18.30& $3.3$& $-3.3$\\
    \hline
    \\
  \end{tabular}
  \caption{Characteristics of the ground state $u_1$ and the
    l.e.n.s.\ $u_2$ for $\Omega = \mathopen]-1,1\mathclose[$.}
  \label{table:sols1D}
\end{table}

If one looks at the first and second eigenfunctions $\phi_1$ and
$\phi_2$ (see Fig.~\ref{fig:eigen1D}), the concentration
phenomena may be
surprising as one expects $u_1$ (resp.\ $u_2$) to resemble $\phi_1$
(resp.\ $\phi_2$).  However, the above results say that the latter is
true for $s$ fixed and $p \to 2$.  If one set $s$ to, say, $0.3$, and
let $p \to 2$, one clearly sees on Fig.~\ref{fig:p->2} that the ground
state goes to a multiple of $\phi_1$.

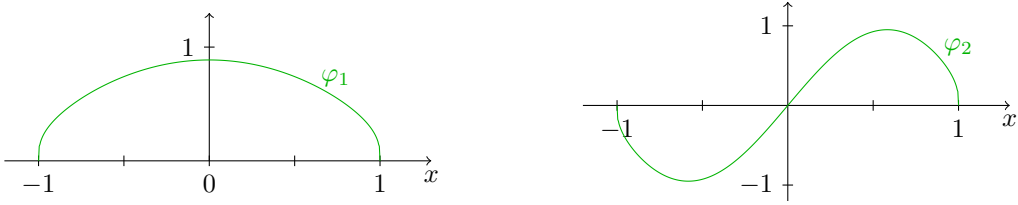
\begin{figure}[ht]
  \begin{minipage}[b]{0.48\linewidth}
    \centering
    \begin{tikzpicture}[x=15ex, y=10ex]
      \draw[->] (-1.2,0) -- (1.3,0) node[below]{$x$};
      \draw[->] (0,0) -- (0, 1.3);
      \foreach \x in {-1, 0, 1}{%
        \draw (\x, 2pt) -- (\x,-2pt) node[below]{$\x$};
      }
      \draw (0.5, 2pt) -- (0.5, -2pt);
      \draw (-0.5, 2pt) -- (-0.5, -2pt);
      \foreach \y in {1}{%
        \draw (2pt, \y) -- (-2pt, \y) node[left]{$\y$};
      }
      \draw[color=darkgreen] plot file{graphs/phi1_0.3.dat};
      \node[color=darkgreen] at (0.74, 0.73) {$\phi_1$};
    \end{tikzpicture}
  \end{minipage}
  \hfill
  \begin{minipage}[b]{0.48\linewidth}
    \centering
    \begin{tikzpicture}[x=15ex, y=7ex]
      \draw[->] (-1.2,0) -- (1.3,0) node[below]{$x$};
      \draw[->] (0,-1.2) -- (0, 1.3);
      \foreach \x in {-1, 1}{%
        \draw (\x, 2pt) -- (\x,-2pt) node[below]{$\x$};
      }
      \draw (0.5, 2pt) -- (0.5, -2pt);
      \draw (-0.5, 2pt) -- (-0.5, -2pt);
      \foreach \y in {-1, 1}{%
        \draw (2pt, \y) -- (-2pt, \y) node[left]{$\y$};
      }
      \draw[color=darkgreen] plot file{graphs/phi2_0.3.dat};
      \node[color=darkgreen] at (1, 0.73) {$\phi_2$};
    \end{tikzpicture}
  \end{minipage}
  \caption{First and second eigenfunctions for
    $s = 0.3$.
  }
  \label{fig:eigen1D}
\end{figure}

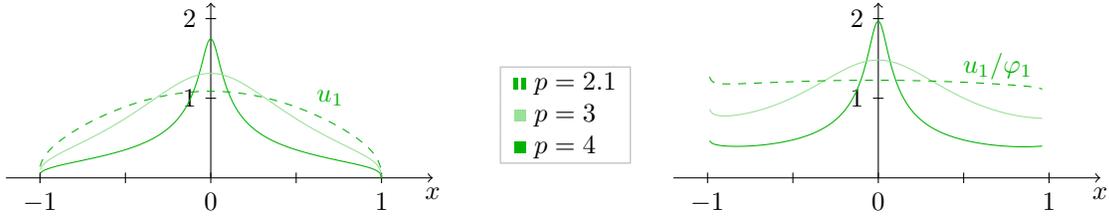
\begin{figure}[ht]
  \begin{minipage}[b]{0.4\linewidth}
    \centering
    \begin{tikzpicture}[x=15ex, y=7ex]
      \draw[->] (-1.2,0) -- (1.3,0) node[below]{$x$};
      \draw[->] (0,0) -- (0, 2.2);
      \foreach \x in {-1, 0, 1}{%
        \draw (\x, 2pt) -- (\x,-2pt) node[below]{$\x$};
      }
      \draw (0.5, 2pt) -- (0.5, -2pt);
      \draw (-0.5, 2pt) -- (-0.5, -2pt);
      \foreach \y in {1, 2}{%
        \draw (2pt, \y) -- (-2pt, \y) node[left]{$\y$};
      }
      \draw[color=darkgreen] plot file{graphs/u1D_0.3_4.dat};
      \draw[color=darkgreen!40] plot file{graphs/u1D_0.3_3.dat};
      \draw[color=darkgreen, dashed] plot file{graphs/u1D_0.3_2.1.dat};
      \node[color=darkgreen] at (0.7, 1) {$u_1$};
    \end{tikzpicture}
  \end{minipage}
  \hfill
  \arrayrulecolor{gray!60}
  \raisebox{5ex}{%
    \begin{tabular}[b]{|l|}
      \hline
      \textcolor{darkgreen}{\rule{0.35ex}{1ex}\hspace{0.3ex}\rule{0.35ex}{1ex}}
      $p = 2.1$\\
      \textcolor{darkgreen!40}{\rule{1ex}{1ex}} $p = 3$\\
      \textcolor{darkgreen}{\rule{1ex}{1ex}} $p = 4$\\
      \hline
    \end{tabular}}
  \hfill
  \begin{minipage}[b]{0.4\linewidth}
    \centering
    \begin{tikzpicture}[x=15ex, y=7ex]
      \draw[->] (-1.2,0) -- (1.3,0) node[below]{$x$};
      \draw[->] (0,0) -- (0, 2.2);
      \foreach \x in {-1, 0, 1}{%
        \draw (\x, 2pt) -- (\x,-2pt) node[below]{$\x$};
      }
      \draw (0.5, 2pt) -- (0.5, -2pt);
      \draw (-0.5, 2pt) -- (-0.5, -2pt);
      \foreach \y in {1, 2}{%
        \draw (2pt, \y) -- (-2pt, \y) node[left]{$\y$};
      }
      \draw[color=darkgreen] plot file{graphs/u1D_phi1_0.3_4.dat};
      \draw[color=darkgreen!40] plot file{graphs/u1D_phi1_0.3_3.dat};
      \draw[color=darkgreen, dashed] plot file{graphs/u1D_phi1_0.3_2.1.dat};
      \node[color=darkgreen] at (0.7, 1.4) {$u_1/\phi_1$};
    \end{tikzpicture}
  \end{minipage}
  \caption{Comparison of the ground state $u_1$ with $\phi_1$
    for $s = 0.3$ and $p \in \{2.1, 3, 4\}$.}
  \label{fig:p->2}
\end{figure}

\medskip

For the two dimensional case, the computation of the stiffness
matrix~\eqref{eq:stiffness}
is more challenging~\cite[p.~1259]{Elia-Gunzburger}.  The reason is
that there are no longer explicit formulas for the integrals and
$\complement(S_i \cup S_j)$ is not a simple shape.
Let us give some information on how we estimate the 
stiffness matrix~\eqref{eq:stiffness}.
The functions of the space~$H$ are approximated by $P^1$-finite
elements on a triangular mesh~$\T$ of~$\Omega$ (i.e., continuous
functions that are affine on each triangle of the mesh~$\T$).
We require that these functions vanish on
(the piecewise affine approximation of)~$\partial\Omega$.

To deal with the singular kernel, we use a generalized Duffy
transformation.  Let us explain how it works to compute
$\int_{T}\int_T \bigl(\phi_i(x)-\phi_i(y)\bigr)
\bigl(\phi_j(x)-\phi_j(y)\bigr) K(x-y)\intd x \intd y$ where $T$ is a
triangle of the mesh of~$\T$.  For the outer integral, we use a
standard second order integration scheme which evaluates the function
at the middle of the edges of~$T$.  For the inner one, we first make
use of the fact that $\phi_i$ (as well as $\phi_j$) is affine on~$T$
so that $\phi_i(x)-\phi_i(y) = \nabla\phi_i \cdot (x-y)$ where
$\nabla\phi_i$ is constant on~$T$, so the integral boils down to
\begin{equation}
  \label{eq:singular-integ}
  \tfrac{1}{2} c_{N,s}
  \int_T  \nabla\phi_i \cdot e_x \, \nabla\phi_j \cdot e_x
  \, \frac{1}{\abs{x-y}^{2s}} \intd x
  \qquad
  \text{where } e_x := \frac{x-y}{\abs{x-y}}.
\end{equation}
For each $y \in \partial T$ considered for the outer integral
approximation, one can project orthogonally $y$ on the two opposite
sides of~$T$ and compute the integral on $T$ as a sum or difference
(depending on whether the projection falls or not inside~$T$) of
integrals on right triangles $y q_3 p_2$, $y q_3 p_1$,
$y q_2 p_1$ and $y q_2 p_3$ (see Fig.~\ref{fig:splitting-T}).
It thus reamains to compute~\eqref{eq:singular-integ} on a right
triangle to which $y$ is a non-right corner.  So let $T$ be the
triangle $yqp$ with a right angle at $q$.  We perform the
following change of variable, dubbed generalized Duffy transformation,
\begin{equation*}
  x = y + u^\beta (q - y) + u^\beta v (p - q),
  \qquad
  (u,v) \in (0,1)^2,
\end{equation*}
so that \eqref{eq:singular-integ} becomes
\begin{equation*}
  c_{N,s} \abs{T}
  \int_0^1 \!\!\int_0^1
  \nabla\phi_i \cdot e_v \, \nabla\phi_j \cdot e_v
  \frac{\beta u^{2\beta(1-s) -1}}{(\abs{q-y}^2 + v^2 \abs{p-q}^2)^s}
  \intd u \intd v
  \quad
  \text{where } e_v :=
  \frac{q-y + v (p-q)}{\sqrt{\abs{q-y}^2 + v^2 \abs{p-q}^2}}
\end{equation*}
and $\abs{T}$ denotes the area of~$T$.  Taking $\beta := 1/(2(1-s))$,
so that $u^{2\beta(1-s) -1} \equiv 1$, gives a smooth integrand so
that the intégral can be estimated by standard means.

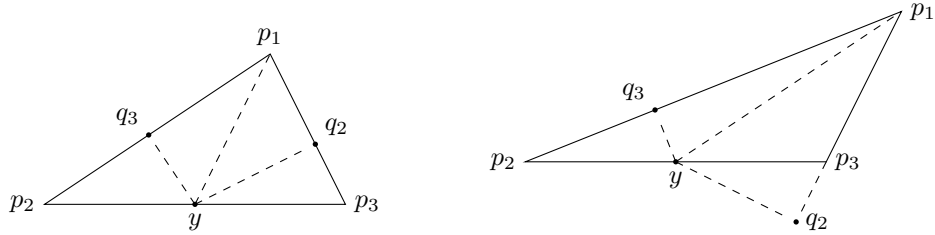
\begin{figure}[ht]
  \centering
  \begin{tikzpicture}
    \draw (0,0) node[left]{$p_2$} -- (4,0) node[right]{$p_3$}
    -- (3,2) node[above]{$p_1$} -- cycle;
    \fill (2,0) circle(1pt) node[below]{$y$}
    (1.384615384, 0.9230769) node(q3){} circle(1pt) node[above left]{$q_3$}
    (3.6, 0.8) node(q2){} circle(1pt) node[above right]{$q_2$};
    \draw[dashed] (2,0) -- (3,2)  (2,0) -- (q3)  (2,0) -- (q2);
  \end{tikzpicture}
  \hfil
  \begin{tikzpicture}
      \draw (0,0) node[left]{$p_2$} -- (4,0) node[right]{$p_3$}
      -- (5,2) node[right]{$p_1$} -- cycle;
    \fill (2,0) circle(1pt) node[below]{$y$}
    (1.7241379, 0.689655) node(q3){} circle(1pt) node[above left]{$q_3$}
    (3.6, -0.8) node(q2){} circle(1pt) node[right]{$q_2$};
    \draw[dashed] (2,0) -- (5,2)  (2,0) -- (q3)  (2,0) -- (q2);    
    \draw[dashed] (4,0) -- (q2);
  \end{tikzpicture}
  \caption{Splitting the integral on right triangles}
  \label{fig:splitting-T}
\end{figure}

To deal with the unboundedness of $\complement(S_i \cup S_j)$ in
\eqref{eq:integ-exterior}, we first integrate on $\Omega \setminus
(S_i \cup S_j)$.  Then, for each $x$ used to compute the outer
integral of~\eqref{eq:integ-exterior}, we mesh $B(x, R) \setminus
\Omega$, where $R$ is large enough so that $B(x, R) \supset \Omega$.
The integral on $B(x, R) \setminus \Omega$ is computed using that
mesh.  For the remaining set, $\complement B(x,R)$, the integral is
computed explicitly:
\begin{equation*}
  \int_{\complement B(x,R)} K(x-y) \intd y
  = \tfrac{1}{2} c_{N,s}
  \int_{\IS^{N-1}} \intd \theta \int_R^\infty
  r^{-N-2s} r^{N-1} \intd r
  = c_{N,s} \,\abs{\IS^{N-1}}\, \frac{1}{4 s R^{2s}}.
\end{equation*}
Note that this approach could be extended to kernels that are well
approximated by functions ``of separated variables'' in a neighborhood
of infinity:
$K(x) \approx \Theta(\theta) / r^{N+2s}$
when $\abs{x} = r \to +\infty$.

In Figures~\ref{fig:2D:s1}--\ref{fig:2D:s2}, you can see the computed
ground state and least energy nodal solutions for $s \in \{0.6, 0.9\}$
on the unit ball $B(0,1)$ for $p=4$.  The behavior is similar to the
one-dimensional case, namely the gound state is rotationally invariant
and the least energy nodal solution looks Schwarz foliated symmetric.
Moreover, both solutions concentrate as $s$ becomes smaller.

\begin{figure}[ht]
  \centering
  \begin{minipage}[b]{0.45\linewidth}
    \includegraphics[width=\linewidth, bb=70 215 552 580]{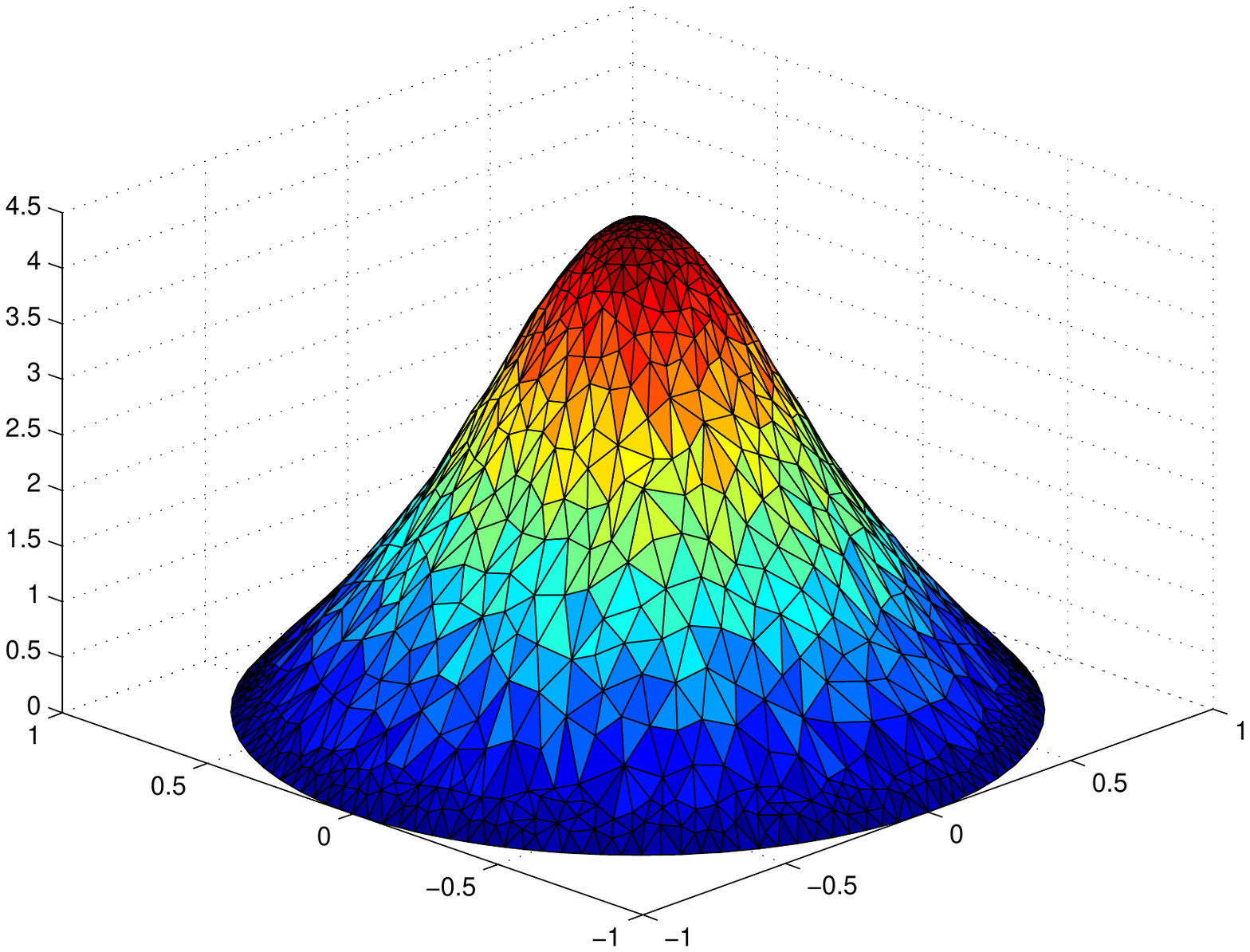}
  \end{minipage}
  \hfil
  \begin{minipage}[b]{0.45\linewidth}
    \includegraphics[width=\linewidth, bb=70 215 552 580]{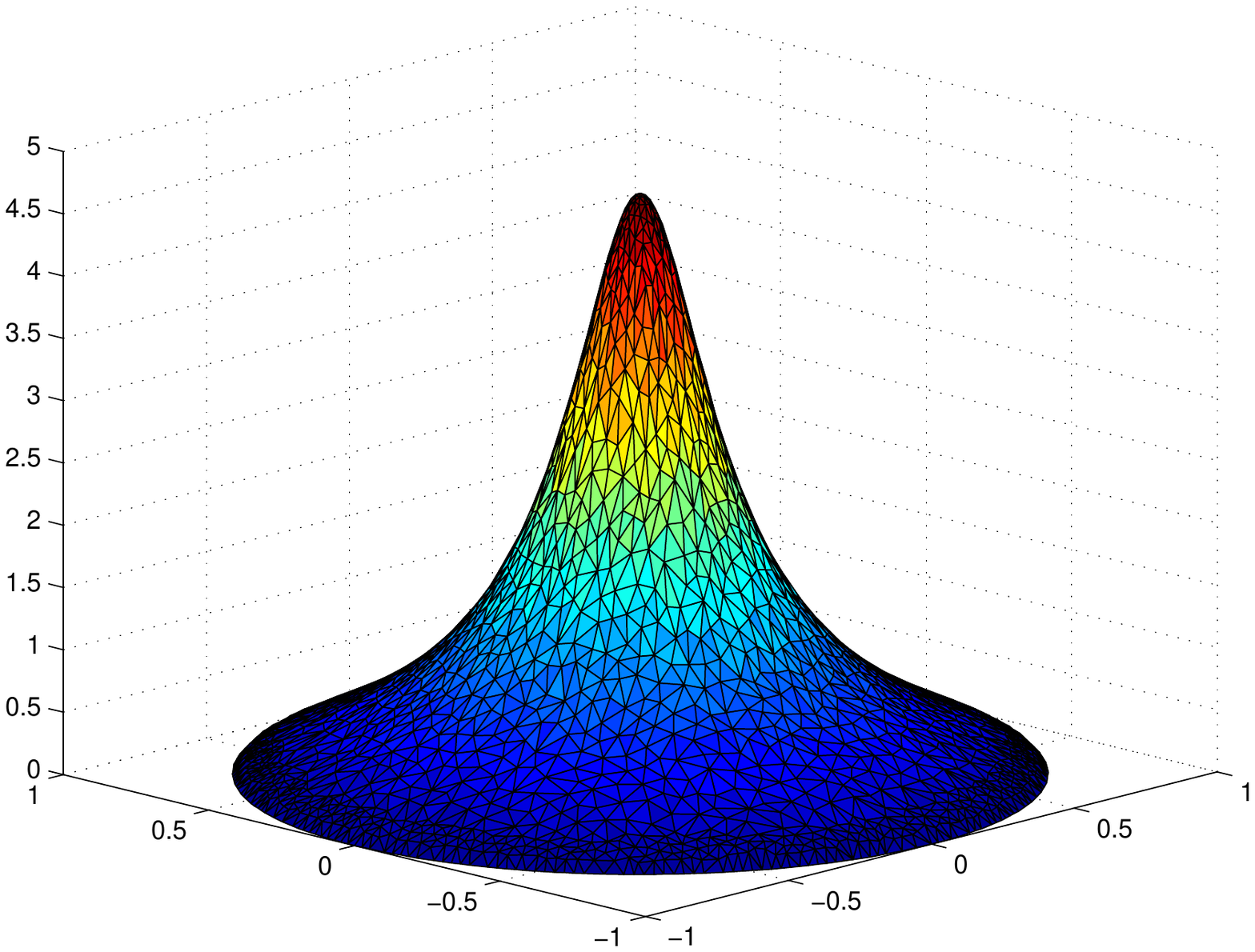}
  \end{minipage}
  \caption{Ground state solution for $s=0.9$ (left) and $s=0.6$ (right).}
  \label{fig:2D:s1}
\end{figure}

\begin{figure}[ht]
  \centering
  \begin{minipage}[b]{0.45\linewidth}
    \includegraphics[width=\linewidth, bb=72 214 553 580]{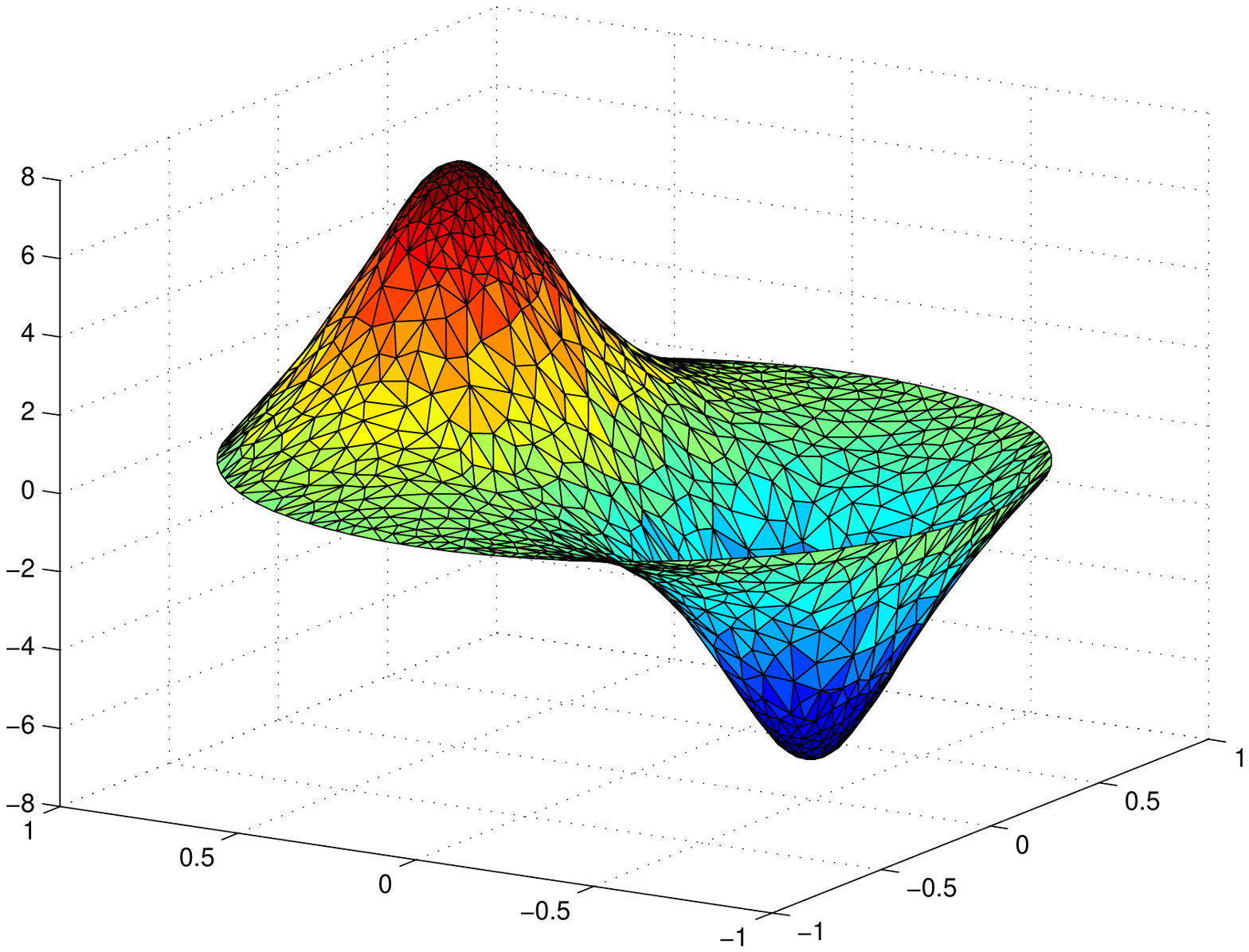}
  \end{minipage}
  \hfil
  \begin{minipage}[b]{0.45\linewidth}
    \includegraphics[width=\linewidth, bb=71 213 559 580]{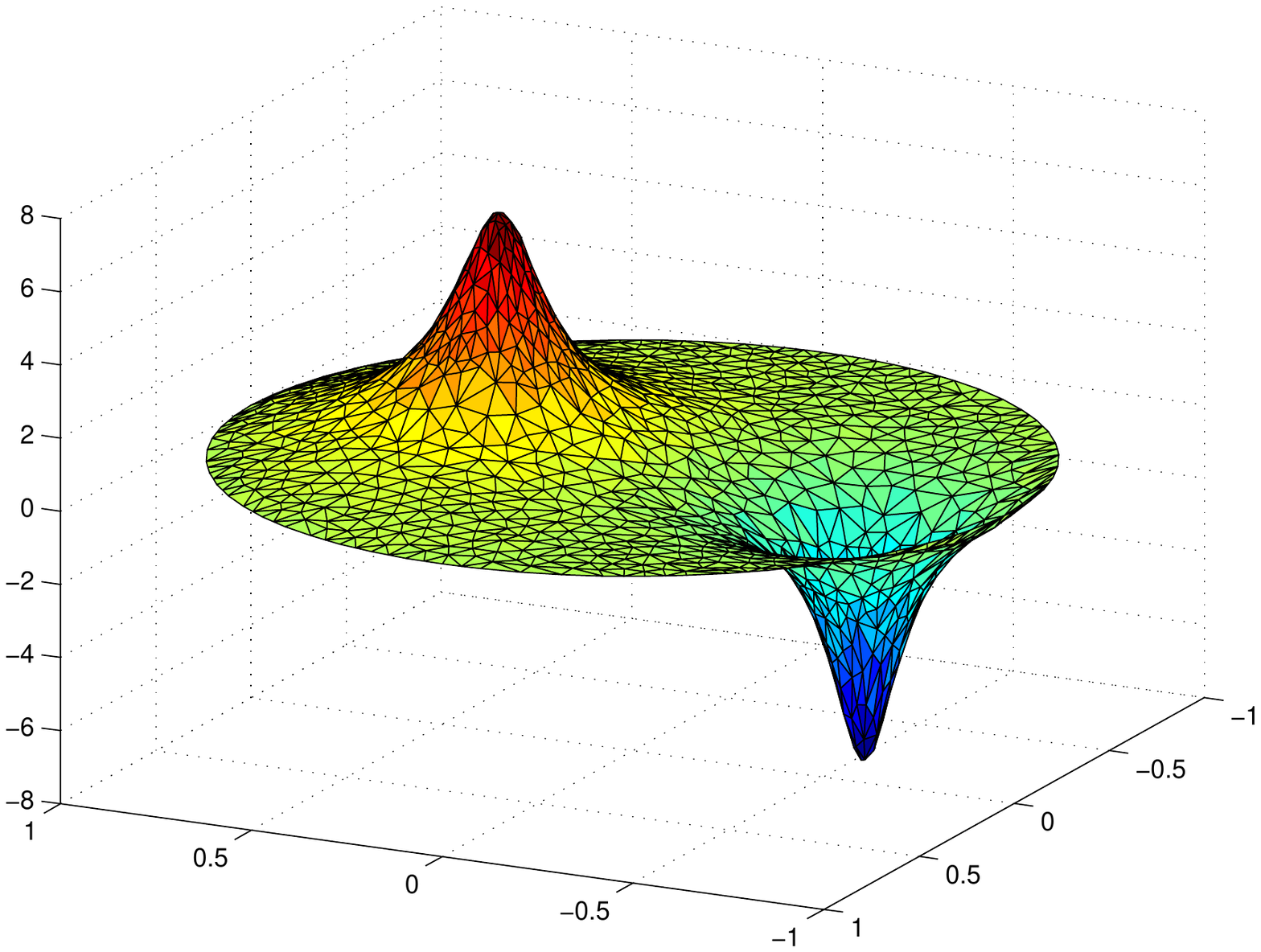}
  \end{minipage}
  \caption{Least energy nodal solution for $s=0.9$ (left)
    and $s = 0.6$ (right).}
  \label{fig:2D:s2}
\end{figure}

\bigskip
\bibliographystyle{amsplain}
\bibliography{fractionallaplacian}

\end{document}